\definecolor{IndigoBlue}{RGB}{4,35,96}
\theoremstyle{plain}
\newtheorem{thm}{Theorem}[section]
\newtheorem{lem}[thm]{Lemma}
\newtheorem{cor}[thm]{Corollary}
\newtheoremstyle{note}%
  {}{}
  {}{}
  {\itshape}{.}
  { }{}
\theoremstyle{note}
\newtheorem{defn}[thm]{Definition}
\newtheorem{rmk}[thm]{Remark}
\newtheorem{clm}{Claim}
\newcommand{\smallbullet}{} 
\DeclareRobustCommand\smallbullet{%
  \mathord{\mathpalette\smallbullet@{0.5}}%
}
\newcommand{\smallbullet@}[2]{%
  \vcenter{\hbox{\scalebox{#2}{$\m@th#1\bullet$}}}%
}
\newcommand\simrightarrow{\stackrel{\textstyle\sim}{\smash{\longrightarrow}\rule{0pt}{0.3ex}}} 
\newcommand{\on}[1]{\operatorname{#1}}
\newcommand{\Gr}{\mathrm{Gr}}
\newcommand{\TGr}[2]{T^{*}\Gr(#1,#2)}
\newcommand{\Hom}{\on{Hom}}
\newcommand{\Ext}{\on{Ext}}
\newcommand{\Sym}{\on{Sym}}
\newcommand{\End}{\on{End}}
\newcommand{\extp}{\@ifnextchar^\@extp{\@extp^{\,}}}
\def\@extp^#1{\mathop{\bigwedge\nolimits^{\!#1}}}
\newcommand{\ldb}{{\mathchoice{\mbox{\rm [\hspace{-0.15em}[}}
		{\mbox{\rm [\hspace{-0.15em}[}}
		{\mbox{\scriptsize\rm [\hspace{-0.15em}[}}
		{\mbox{\tiny\rm [\hspace{-0.15em}[}}}}
\newcommand{\rdb}{{\mathchoice{\mbox{\rm ]\hspace{-0.15em}]}}
		{\mbox{\rm ]\hspace{-0.15em}]}}
		{\mbox{\scriptsize\rm ]\hspace{-0.15em}]}}
		{\mbox{\tiny\rm ]\hspace{-0.15em}]}}}}
\newcommand{\mmid}{\mathrel{\mathchoice
		{\Vert} 
		{\Vert} 
		{\scriptstyle\Vert} 
		{\scriptscriptstyle\Vert} 
}}
\newcommand{\bC}{\mathbb{C}}
\newcommand{\bP}{\mathbb{P}}
\newcommand{\be}{\mathbb{e}} 
\newcommand{\bbf}{\mathbb{f}} 
\newcommand{\bS}{\mathbb{S}} 
\newcommand{\bT}{\mathbb{T}} 
\newcommand{\bD}{\mathbb{D}} 
\newcommand{\GL}{\mathrm{GL}}
\newcommand{\gl}{\mathfrak{gl}}
\newcommand{\fsl}{\mathfrak{sl}}
\newcommand{\fZ}{\mathfrak{Z}} 
\newcommand{\cE}{\mathcal{E}} 
\newcommand{\cF}{\mathcal{F}} 
\newcommand{\cU}{\mathcal{U}}
\newcommand{\cT}{\mathcal{T}}
\newcommand{\cR}{\mathcal{R}}
\newcommand{\cQ}{\mathcal{Q}}
\newcommand{\cV}{\mathcal{V}}
\newcommand{\cC}{\mathcal{C}}
\newcommand{\cO}{\mathcal{O}} 
\newcommand{\rH}{\mathscr{H}}
\newcommand{\tG}{\tilde{G}}
\newcommand{\tH}{\tilde{H}}
\newcommand{\sV}{\mathscr{V}}
\newcommand{\sW}{\mathscr{W}}
\newcommand{\sL}{\mathscr{L}}
\newcommand{\Cx}{\bC^{\times}}
\newcommand{\la}{\lambda}
\newcommand{\Db}{\mathnormal{D}^{\mathrm{b}}} 
\title{Canonical tilting bundles: the first nonabelian examples}
\author{Wei Tseu}
\date{}
\begin{document}

\begin{abstract}
We present an explicit construction of tilting bundles on cotangent bundles of Grassmannians of 2-planes.
This construction is based on Kapranov's exceptional collection for the underlying Grassmannians, and utilizes specific iterative extensions. 
The resulting tilting bundle exhibits invariance under the derived equivalence for the stratified Mukai flop through the geometric categorical $\mathfrak{sl}(2)$ action, providing a categorical lift of the K-theoretic canonical basis up to shifts. 
\end{abstract}

\maketitle

\section{Introduction}
We will work over $\bC$ throughout this paper, though all statements and arguments hold over any algebraically closed field of characteristic zero.

\subsection{Background}
Suppose $X$ is a smooth algebraic variety projective over an irreducible, affine, normal base $Y$. 
The pair $X \to Y$ is called a \textit{symplectic resolution} if there exists a non-degenerate symplectic form on the smooth locus of $Y$ that extends to a symplectic form on $X$.

Let $\Db(X) = \Db\mathrm{Coh}(X)$ be the derived category of bounded complexes of coherent sheaves on $X$.
We refer to standard references, such as the textbook \cite{Huy}, for definitions of various notions related to derived categories. 
An object $A$ is a \textit{classical generator} of $\Db(X)$ if the smallest full triangulated subcategory containing $A$, closed under isomorphisms and direct summands, is $\Db(X)$ itself.

\begin{defn}
	A \textit{tilting bundle}\footnote{For an object to be tilting in a triangulated category, the algebra $\End(\cE)$ must also have finite global dimension. This holds automatically in our geometric setting due to the finite global dimension of $\mathrm{Coh}(X)$; see \cite{KR20}.} 
	on $X$ is a vector bundle $\cE$ with no higher self-extensions $\Ext^{>0} (\cE, \cE) = 0$ that classically generates the derived category $\Db(X)$. 
\end{defn}

The tilting bundle brings a noncommutative perspective to the geometry of the variety $X$: it induces an equivalence
\begin{equation*}
    \begin{tikzcd}        
    R\Hom(\cE, -): \Db(X) \arrow[r, shift left] & \Db(\End(\cE)): - \otimes^{L} \cE  \arrow[l,shift left] 
    \end{tikzcd}
\end{equation*}
between $\Db(X)$ and the bounded derived category $\Db(\End(\cE))$ of finitely generated right modules over the usually noncommutative algebra $\End(\cE)$. 

Due to Kaledin \cite{Kaledin}, the existence of tilting bundles is known for symplectic resolutions via deformation quantization.
In the recently emerging \textit{symplectic duality} (see, e.g., \cite{symduality}), these resolutions of symplectic singularities appear in pairs with various symmetric geometric properties and have numerous connections to representation theory. 
In particular, the \textit{Higgs branch} of symplectic duality contains a rich family of symplectic varieties known as \textit{Nakajima quiver varieties}. 
As the simplest example, it includes the cotangent bundle $\TGr{n}{\bC^{N}}$ of the Grassmannian $\Gr(n,\bC^{N})$ of $n$-planes in $\bC^{N}$. 
It is well-known that, in the abelian case ($n=1$), the pullback of Beilinson's collection \cite{Bei} $\{\cO, \cdots, \cO(N-1)\}$ from the zero section $\bP^{N-1}$ to $T^{*}\bP^{N-1}$ constitutes to a tilting generator; see, e.g., \cite{Hara}. 

In this paper, we provide an explicit construction of tilting bundles on $\TGr{2}{\bC^{N}}$, motivated by the geometric representation theory of $\fsl(2)$ on the Nakajima quiver varieties $\bigsqcup_{n} \TGr{n}{\bC^{N}}$. 
As an illustration, we describe the bundle for the $N=4$ case below before stating the main theorem. 
Notably, the quantization approach is further developed by Webster \cite{Web,Web2} to explicitly construct the tilting bundles for the \textit{Coulomb branch} of symplectic duality, including all $\TGr{n}{\bC^{N}}$. 
This is detailed in a recent joint work with Suter \cite{Web24} for $N=4$, which coincides with our example below up to tensoring with the line bundle $\cO(1)$. 
Earlier, Toda and Uehara \cite{TU} provided an implicit construction of tilting bundles on $\TGr{2}{\bC^{4}}$ via an inductive argument based on the generator $\bigoplus_{i=0}^{4} \cO(i)$. 
We also refer to \cite[\S 4]{Web24} for a detailed comparison. 

\subsection{Example}
Let $V_{2}$ be the rank two tautological subbundle of the trivial bundle $\bC^{4}$ on $\Gr(2,\bC^{4})$. 
Kapranov's exceptional collection\footnote{Here we shifted the original collection $\{ \cO, \cO(-1), V_{2}, \cO(-2), V_{2}\otimes\cO(-1), \Sym^{2}V_{2} \}$ in \cite{Kap} by $\cO(1)\otimes (-)$ for our purposes.}  \cite{Kap} 
\begin{equation}\label{eq:Kap}
    \left\{ \cO(1), \cO, V_{2}^{\vee}, \cO(-1), V_{2}, \Sym^{2}V_{2}\otimes\cO(1) \right\}
\end{equation}
gives rise to a tilting bundle on $\Gr(2,\bC^{4})$ by taking the direct sum of (\ref{eq:Kap}).  
Pull these bundles in the collection back to $\TGr{2}{\bC^{4}}$ along the projection, and still denote them by the same letters. 
It was already noticed in \cite{Kawa} that although the collection still generates $\Db(\TGr{2}{\bC^{4}})$, there are nonvanishing higher extensions between these bundles due to the presence  of the cotangent fibres.  

In \cite{CKL-sl2}, Cautis, Kamnitzer, and Licata construct a natural equivalence 
\begin{equation*}
    \mathbb{T}: \Db(\TGr{n}{\bC^{N}}) \simrightarrow \Db(\TGr{N-n}{\bC^{N}})
\end{equation*}
for the stratified Mukai flop $\TGr{n}{\bC^N} \dashrightarrow \TGr{N-n}{\bC^N}$, as we will recall in \S\ref{sec:sl2}.
When $n=1$, Hara \cite{Hara} proves a result equivalent to Beilinson's collection staying stable under this equivalence, i.e.\ $\bT(\cO(-l)) = \cO(l)$ for $l = -1,0,\cdots, N-2$. 
Inspired by this fact, we examine the collection (\ref{eq:Kap}) under $\bT$ (which is an autoequivalence of $\Db(\TGr{2}{\bC^{4}})$, and find that 
\[
\bT(\cO(1)) = \det(\bC^{4}/V_{2})^{\vee} \cong \cO(-1), \quad \bT(\cO) = \cO, \quad \bT(V_{2}^{\vee}) = (\bC^{4}/V_{2})^{\vee}. 
\]
If we identify $\TGr{2}{\bC^{4}}$ with $\TGr{\bC^{4}}{2}$, the cotangent bundle of the Grassmannian of 2-dimensional quotients $V_{2}^{\prime}$ of $\bC^{4}$, via the canonical isomorphism $\bC^{4}/V_{2}\cong V_{2}^{\prime}$, then $\cO(1) = \det(V_{2})^{\vee}$, $\cO$, and $V_{2}^{\vee}$ on $\TGr{2}{\bC^{4}}$ are mapped to $\det(V_{2}^{\prime})^{\vee}$, $\cO$, and $(V_{2}^{\prime})^{\vee}$ on $\TGr{\bC^{4}}{2}$ respectively. 
In other words, these bundles are in some sense invariant under the flop equivalence $\bT$. 
We will justify the use of the term `invariance' in terms of a bar involution on the K-theory of  $\TGr{2}{\bC^{4}}$ later. 
For now, we can directly verify that these bundles have no higher extensions, so we keep them in the collection. 

Unfortunately, the other three bundles from (\ref{eq:Kap}) do not satisfy the invariance. 
However, the bundle $V_{2}$ sits in the short exact sequence $0\to V_{2} \to \bC^{4} \to \bC^{4}/V_{2} \to 0$, so the pair $\{ \cO, V_{2}\}$ generates the same category as $\{ \cO, \bC^{4}/V_{2}\}$. 
It turns out that $\cE_{0} := \bC^{4}/V_{2}$ is invariant
\[
\bT(\bC^{4}/V_{2}) = V_{2}. 
\]
To find replacements for $\cO(-1) = \det(V_{2})$ and $\Sym^{2}V_{2}\otimes \cO(1)$, we construct nontrivial extension bundles $\cE_{1}$ and $\cE_{-1}$ sitting in exact triangles of the form
\begin{align*}
    \det(\bC^{4}/V_{2}) [-1] \longrightarrow \det(V_{2}) \longrightarrow  &\cE_{1},\\
    \det(V_{2})^{\vee}\otimes \det(\bC^{4}/V_{2}) [-1] \longrightarrow \cO \longrightarrow  &\cE_{-1},
\end{align*}
whose classes are the unique (up to a scalar multiple) $\GL(\bC^{4})$-equivariant ones. 
The reader may ignore the equivariant structure and consider $\det(\bC^{4}/V_{2}) \cong \cO(1)$ and $\det(V_{2})^{\vee}\otimes \det(\bC^{4}/V_{2}) \cong \cO(2)$, then both extensions correspond to a generator of the one-dimensional piece
\begin{equation}\label{eq:choice}
    \bC \cong H^{1}_{\Gr(2,\bC^{4})} \left( \cO(-2) \otimes \cT_{\Gr(2,\bC^{4})} \right) \subset H^{1}_{\TGr{2}{\bC^{4}}} \left( \cO(-2) \right).
\end{equation}
There are multiple reasons for considering these extensions. 
First, we can relate the bundles to the original collection (\ref{eq:Kap}) and deduce the generation property. 
For example, the bundle $\det(V_{2})^{\vee}\otimes \det(\bC^{4}/V_{2})$, together with $\Sym^{2}V_{2}\otimes \cO(1)$ and $\cO$, appear as the associated graded pieces of certain natural filtrations on $\cO(1)\otimes \extp^{2}\bC^{4}$ and $V_{2}^{\vee} \otimes \extp^{1} \bC^{4}$ (see Lemma~\ref{lem:gen} for details). 
Thus, the collection $\{\cO(1),\cO,V_{2}^{\vee}, \Sym^{2}V_{2}\otimes \cO(1)\}$ generates the same category as $\{\cO(1),\cO,V_{2}^{\vee}, \det(V_{2})^{\vee}\otimes \det(\bC^{4}/V_{2})\}$. 
Second, the bundles $\cE_{1}$ and $\cE_{-1}$ are now invariant under $\bT$; their images sit in similar triangles  
\begin{align*}
    \det(\bC^{4}/V_{2})[-1] \longrightarrow \det(V_{2}) \longrightarrow  &\bT(\cE_{1}), \\
    \cO[-1] \longrightarrow \det(V_{2})\otimes \det(\bC^{4}/V_{2})^{\vee}  \longrightarrow &\bT(\cE_{-1})
\end{align*}
by one of our main results (Theorem~\ref{thm:2}). 
Finally, the new collection 
\[
\left\{\cO(1), \cO, V_{2}^{\vee}, \cE_{1}, \cE_{0}, \cE_{-1}
\right\}
\]
has no higher extensions any more. 
This is proved in \S\ref{sec:extvan} by chasing the long exact sequences, where the lowest-degree choice (\ref{eq:choice}) of the extension classes plays a crucial role in showing that certain connecting morphisms are surjective or isomorphic. 

\subsection{Main results}
Following these ideas, we discover a construction for general $N$ that involves iterative extensions. 
The main result (Theorem~\ref{thm:tilting}) of this paper is as follows.. 

\begin{thm}\label{thm:main}
    A tilting bundle $\cE$ on $\TGr{2}{\bC^{N}}$ is given by the direct sum of the following $\binom{N}{2}$ indecomposable $\GL(\bC^{N})$-equivariant vector bundles.  
    \begin{enumerate}[wide, labelindent=0pt, label = (\roman*)]
        \item The Schur functors\footnote{The definition of a Schur functor is recalled in \S\ref{sec:Schur}. As $V_{2}$ is of rank two, this Schur functor $\bS^{\la}V_{2}$ is, in particular, isomorphic to $\Sym^{\la_{1}-\la_{2}} V_{2} \otimes \det(V_{2})^{\la_{2}}$. } $\cE_{\la} = \bS^{\la}V_{2}$ for $-1 \leq \la_{2}\leq \la_{1} \leq N-4$. 
        
        \item For each $-1 \leq k \leq N-3$, an iterated nontrivial extension $\cE_{k}$, which is taken as the convolution of a complex of the form
        \begin{equation*}
            \cV_{k,-n^{-}_{k}}[-n^{+}_{k}-n^{-}_{k}] \longrightarrow \cdots \longrightarrow \cV_{k,i} [-n_{k}^{+} + i] \longrightarrow \cdots \longrightarrow \cV_{k,n^{+}_{k}}[0], 
        \end{equation*}
        where $n^{+}_{k} = \lfloor (N-3-k)/2 \rfloor$, $n^{-}_{k} = \lfloor (k+1)/2 \rfloor$, and 
        \[
        \cV_{k,i} = \det(V_{2})^{k+i} \otimes \extp^{N-3-k-2i} \bC^{N}/V_{2}
        \]
        for $i = -n_{k}^{-},\cdots, 0, \cdots, n_{k}^{+}$. 
    \end{enumerate}
\end{thm}
The differentials in the above complex are chosen to be the unique (up to a scalar multiple) $\GL(\bC^{N})$-equivariant ones as in \S\ref{sec:con}, and the convolution exists uniquely. 
In \S\ref{sec:gen}, we prove that the bundle $\cE$ is a classical generator of $\Db(\TGr{2}{\bC^{N}})$ by reducing the problem to the generation of Kapranov's collection. 
We then complete the proof of Theorem~\ref{thm:main} by computing the higher extension groups case by case in \S\ref{sec:extvan}.

Let $V_{N-2}$ be the rank $N-2$ tautological bundle on $\TGr{N-2}{\bC^{N}}$ and $V_{2}^{\prime} = \bC^{N}/V_{N-2}$. 
The images of $\cE_{\la}, \cE_{k}$ under the flop equivalence
\begin{equation*}
    \mathbb{T}: \Db(\TGr{2}{\bC^{N}}) \simrightarrow \Db(\TGr{N-2}{\bC^{N}})
\end{equation*}
are calculated by Theorem~\ref{thm:flop} as follows. 

\begin{thm}\label{thm:2}
    We have $\bT (\cE_{\la}) = \bS^{\la}V_{2}^{\prime}$, and $\bT (\cE_{k})$ is the convolution of a complex 
    \begin{equation*}
        \cV_{k,n^{+}_{k}}^{\prime}[-n^{-}_{k}-n^{+}_{k}] \longrightarrow \cdots \longrightarrow  \cV_{k,i}^{\prime}[-n^{-}_{k}-i] \longrightarrow \cdots \longrightarrow \cV_{k,-n^{-}_{k}}^{\prime}[0], 
    \end{equation*}
    where $\cV_{k,i}^{\prime} = \det(V_{2}^{\prime})^{k+i} \otimes \extp^{N-3-k-2i}  V_{N-2}$.     
\end{thm}

Again, the convolution is uniquely determined by its $\GL(\bC^{N})$-equivariant structure.
This theorem is proved by computing the defining Rickard complex of $\bT$ in \S\ref{sec:flop}, with the proof relying heavily on the Lascoux resolutions from \S\ref{sec:cal}. 

After choosing an appropriate $\Cx$-equivariant structure, we prove that the endomorphism algebra $\End(\cE)$ is a Koszul algebra in \S\ref{sec:Koszul}. 

\begin{thm}\label{thm:3}
	The algebra $\End(\cE)$ is Koszul. 
\end{thm}

Finally, our use of the term \textit{canonical} is motivated by a conjecture of Hikita \cite[\S 3.6]{Hikita}, which states that an invariant basis under a \textit{bar involution} on the equivariant K-theory of a nice symplectic resolution should admit a categorical lift to a tilting bundle. 
This conjecture generalizes a similar result for Springer resolutions, which was pioneered by Lusztig and proven in the celebrated work \cite{BM} of Bezrukavnikov and Mirkovi{\'c}. 

For a simply-laced Nakajima quiver variety (e.g.\ the cotangent bundle of a Grassmannian), the bar involution was already constructed by Varagnolo and Vasserot \cite{VV03}. 
In the last part (\S\ref{sec:inv}) of this paper, we prove that the K-theory classes of the indecomposable summands of $\cE$ are invariant under this bar involution and are contained in the canonical basis, as a corollary of Theorem~\ref{thm:2} and Theorem~\ref{thm:3}. 
For dimension reasons, this implies that our collection $\{\cE_{\la},\cE_{k}\}_{\la,k}$ provides a categorical lift of the canonical basis, up to shifts.

\subsection*{Acknowledgments}
I am grateful to Travis Schedler for his guidance and support. 
Special thanks to Tatsuyuki Hikita and Ben Webster for generously sharing their respective works with me during the early stages of this project. 
I would also like to thank Wahei Hara, Alice Rizzardo, Ed Segal, and Richard Thomas for helpful comments and discussions. 
This work was supported by the Engineering and Physical Sciences Research Council [EP/S021590/1]; The EPSRC Centre for Doctoral Training in Geometry and Number Theory (The London School of Geometry and Number Theory), University College London.

\section{Preliminaries}
In this section, we collect some facts about vector bundles, cohomology, and resolutions, and recall the construction of the geometric categorical $\fsl(2)$ action on cotangent bundles of Grassmannians.

Fix an $N$-dimensional vector space $\bC^{N}$, and consider the Grassmannian $\Gr(n,N)$ of $n$-dimensional subspaces of $\bC^{N}$. 
We will denote the rank $n$ tautological vector bundle on $\Gr(n,N)$, as well as its pullback to the cotangent bundle $\TGr{n}{N}$, by $V_{n}$. 
The Grassmannian admits an open covering by affine spaces of the form 
\[
U_{W} = \{ V \in \Gr(n,N) \mid V \cap  W =0 \} \cong \Hom(V_{n},W),
\]
where $W$ is an $(N-n)$-dimensional subspace complementary to $V_{n}\in \Gr(n,N)$ and the isomorphism assigns to any linear map $V_{n} \to W$ its graph.
Since $W \cong \bC^{N}/V_{n}$, these local charts identify the tangent bundle to $\Gr(n,N)$ with the Hom bundle $\Hom(V_{n},\bC^{N}/V_{n})$ via gluing.
Similarly, the cotangent bundle $\TGr{n}{N}$ is identified with the total space of $\Hom(\bC^{N}/V_{n}, V_{n})$ over $\Gr(n,N)$.
For details, see, for example, \cite{3264}.

\subsection{Quiver varieties}\label{sec:quiver}
The cotangent bundle of a Grassmannian can also be described as a GIT quotient.  
Suppose $V_n$ is an $n$-dimensional vector space, and let
\[
M = M(n,N) = \Hom(V_n,\bC^{N}) \oplus \Hom(\bC^{N}, V_n)
\]
be the space of `double framed quiver representations', on which $\GL(V_n)$ acts by change of basis:  $g\cdot (a,b) = (ag^{-1},gb)$, $g\in \GL(V_n), (a,b)\in M$. 
By definition \cite{Nak94}, the \textit{Nakajima quiver variety} $\TGr{n}{N}$ is the GIT quotient 
\begin{equation}\label{eq:Nak}
	\mu^{-1}(0)\sslash_{\chi} \GL(V_n) = \on{Proj} \bigoplus_{m \geq 0} \bC [\mu^{-1}(0)]^{\GL(V_n), \chi^{m}}, 
\end{equation}
where $\mu: M \to \gl(V_n), (a,b)\mapsto b\circ a$ is the moment map and $\chi(g) = \det(g)^{-1}$ is a character of $\GL(V_n)$. 
The quotient (\ref{eq:Nak}) also admits a natural projection to the categorical quotient 
\begin{equation}\label{eq:cotag}
	\on{Spec} \bC [\mu^{-1}(0)]^{\GL(V_n)} \cong \left\{ f = a \circ b \in \End(\bC^{N}) \mid f^{2}=0, \,\, \on{rk}(f) \leq n \right\}, 
\end{equation}
which is equivalent to the blow-down of the zero section of $\TGr{n}{N}$. 
For this reason, each cotangent vector to $\Gr(n,N)$ can be viewed as a square-zero endomorphism of $\bC^{N}$. 

Suppose we take the opposite stability condition $\chi^{-1}$ in the GIT quotient (\ref{eq:Nak}), the quiver variety becomes the cotangent bundle of the Grassmannian $\Gr(N,n)$ of $n$-dimensional quotients of $\bC^{N}$, i.e.\
\[
\mu^{-1}(0)\sslash_{\chi^{-1}} \GL(V_n) \cong \TGr{N}{n}. 
\]
Now, as the vector space $M$ carries a natural $\GL(\bC^N)$-module structure via $h\cdot (a,b) = (ha,bh^{-1})$, $h\in\GL(\bC^N)$, there is an induced action of $\GL(\bC^N)$ on either $\TGr{n}{N}$ or $\TGr{N}{n}$.
In fact, the quiver varieties $\TGr{n}{N}$ and $\TGr{N}{n}$ are related by a canonical isomorphism as follows.
Consider the map
\begin{equation*}
	(-)^\dagger: M\longrightarrow M, \quad (a,b) \longmapsto (b^{\on{t}}, -a^{\on{t}}),
\end{equation*}
as well as the group automorphism of $\GL(\bC^N) \times \GL(V_n)$ given by
\begin{equation*}
	(-)^\dagger: (g,h) \longmapsto ((g^{\on{t}})^{-1}, (h^{\on{t}})^{-1}),
\end{equation*}
where $(-)^{\on{t}}$ is the transpose. 
The two maps are compatible with the group actions $((g,h) \cdot (a,b) )^{\dagger} = (g,h)^{\dagger} \cdot (a,b)^{\dagger}$, hence they induce a $\GL(\bC^N)$-equivariant isomorphism between the quiver varieties (\cite[\S 4.6]{VV03})
\[
\dagger: \TGr{n}{N} \simrightarrow \TGr{N}{n}.
\]

There is another canonical isomorphism between quiver varieties, known as the `reflection functor', due to Lusztig \cite{LMN-L}, Maffei \cite{LMN-M}, and Nakajima \cite{LMN-N}.  
Consider the locally closed subvariety $Z$ of $M(n,N)\oplus M(N-n,N)$ consisting of all vectors $((a,b), (a',b'))$ such that the sequence 
\[
0\longrightarrow V_n \xlongrightarrow{a} \bC^N \xlongrightarrow{b'} V_{N-n} \longrightarrow 0
\] is exact, $b\circ a = 0 = b' \circ a'$, and $a\circ b = a' \circ b'$. 
The smooth quotient $Z/(\GL(V_n)\times \GL(V_{N-n}))$ turns out to be isomorphic to both $\TGr{n}{N}$ and $\TGr{N}{N-n}$ through the obvious $\GL(\bC^N)$-equivariant projections (\cite[\S 3.1]{LMN-M})
\[
\begin{tikzcd}
	\TGr{n}{N} & \arrow[l,"p_n"'] Z/(\GL(V_n)\times \GL(V_{N-n})) \arrow[r,"p_{N-n}"] & \TGr{N}{N-n}.
\end{tikzcd}
\]
We will refer to the induced isomorphism 
\begin{equation*}
	S_{w_0} = p_{N-n}\circ p_{n}^{-1}: \TGr{n}{N} \simrightarrow \TGr{N}{N-n}
\end{equation*}
of quiver varieties as \textit{the LMN isomorphism}.

\subsection{The grading}\label{sec:grading}
The cotangent bundle of a Grassmannian also carries a natural $\Cx$-action induced by the dilation on the vector space $M$. 
Given the description (\ref{eq:cotag}), this action of $\Cx$ acts by squared dilation on the cotangent fibres. 

We shall consider $\Cx$-equivariant sheaves in this paper. 
Following \cite[\S 2.4]{Cautis}, we adopt the (cohomological) \textit{internal degree shift} $\langle \cdot \rangle$ on the corresponding $\mathbb{Z}$-grading. 
For example, the group $\End(\cO) \cong H^{0}_{T^{*}\Gr}(\cO)$ consists of maps $\cO \to \cO$ of various degrees $0,2,4,\cdots$, and a degree, say, two map of $\End(\cO)_{2}$ becomes equivariant after shifting $\cO \to \cO \langle 2 \rangle$.
Here the target sheaf is concentrated in degree $-2$, and a scalar $t\in\Cx$ acts by power $-2$ on its local sections: $t\cdot f = t^{-2}(t\cdot f_{0})$, where $f\in \cO\langle 2\rangle (U)$ and $f_{0}$ is the same function but viewed in $\cO (U)$.  
As the degree shift is cohomological, we have the convention 
\[
\Hom(\cO, \cO\langle 2\rangle)_{j} = \Hom(\cO,\cO)_{j+2}. 
\]

\subsection{Schur functors}\label{sec:Schur}

Assume $V$ is a rank $n$ vector bundle on a smooth projective variety $X$.
We now recall the definition of a Schur functor of $V$.

A partition, or a non-increasing sequence of natural numbers, $\la = (\la_{1}, \cdots,\la_{n})$ is regarded as a Young diagram with $\la_{1}$ boxes in the first row, $\la_{2}$ boxes in the second row, and so on. 
Its conjugate Young diagram $\la^{\prime} = (\la_{1}^{\prime}, \cdots,\la_{l}^{\prime})$ counts the boxes in each column of $\la$. 
For example, for $\la = (3,1)$, the conjugate is $\la^{\prime} = (2,1,1)$.

\begin{defn}[{\cite[\S 4]{Kr}}]
	For a non-increasing sequence\footnote{From the definition, we can easily see that the Schur functor vanishes if the Young diagram $\la$ has more than $n$ nonzero rows. Thus, we may assume without loss of generality that $\la\in \mathbb{N}^n$.} $\la = (\la_{i}) \in \mathbb{N}^{n}$, define the \textit{Schur functor} $\bS^{\la} V$ as the image of the composition map
	\[
	\extp^{\la_{1}^{\prime}}V \otimes \cdots \otimes \extp^{\la_{l}^{\prime}} V 
	\xlongrightarrow{\Delta^{\otimes l}} V^{\otimes \sum \la_{i}^{\prime}} \xlongrightarrow{s_{\la^{\prime}}} V^{\otimes \sum \la_{i}} \xlongrightarrow{\on{m}} \Sym^{\la_{1}} V \otimes \cdots \otimes \Sym^{\la_{n}} V, 
	\]
	where 
	\begin{enumerate}[wide, labelindent=0pt, label = (\roman*)]
		\item The comultiplication map $\Delta: \extp^{d} V \to V^{\otimes d}$ sends any vector $v_{1}\wedge\cdots\wedge v_{d}$ to $\sum_{\sigma\in S_{d}} \on{sgn}(\sigma) v_{\sigma(1)}\otimes \cdots \otimes v_{\sigma(d)}$, and $\on{m}:V^{\otimes d} \to \Sym^{d}V$ is the natural multiplication map. 
		\item The permutation $s_{\la^{\prime}}$ acts on the $\sum \la_{i}^{\prime} = \sum \la_{i}$ indices by $\la_{1}^{\prime}+ \cdots + \la_{i-1}^{\prime} +j \mapsto \la_{1} + \cdots + \la_{j-1} + i$, where $1\leq j \leq \la_{i}^{\prime}$ and $1\leq i \leq \la_{j}$. 
	\end{enumerate}
\end{defn}

Note that the Schur functor defined here is intended to realize the irreducible representation of $\GL(n)$ with highest weight $\la$ (when $X$ is a point), whereas the one defined in \cite{Wey} (denoted by $L_{\la}$) has highest weight $\la^{\prime}$; see \cite[Theorem 2.2.10]{Wey}. 

Examples of Schur functors include 
\begin{equation*}
	\Sym^{d} V \cong \bS^{\la} V , \quad \la = (d); \quad 
	\extp^{d} V \cong \bS^{\la} V, \quad \la = ((1)^{d}). 
\end{equation*}
For a non-increasing sequence of integers $\la \in \mathbb{Z}^{n}$, we define 
\[
\bS^{\la} V = \bS^{(\la_{1}-\la_{n},\cdots, \la_{n}-\la_{n})} V \otimes \det(V)^{\la_{n}}. 
\]
In particular, this is an isomorphism when $\la \in \mathbb{N}^{n}$ (see Pieri's formula below). 
Using this notation, we have a canonical isomorphism \cite[p.83]{Wey}
\begin{equation}\label{eq:isomschur}
	\bS^{\la} V^{\vee} \cong \bS^{-\la} V, \quad -\la := (-\la_{n},\cdots, -\la_{1}). 
\end{equation}
The following special cases of the Littlewood--Richardson rule will be used frequently in this paper. 

\begin{thm}[{\cite[\S 2.3]{Wey}}]\label{thm:LR}
	\begin{enumerate}[wide, labelindent=0pt, label = (\arabic*)]    
		\item (Pieri's formula) For a Young diagram $\la$, 
		\[
		\bS^{\la}V \otimes \extp^{d} V \cong \bigoplus_{\mu} \bS^{\mu} V, \quad 
		\bS^{\la}V \otimes \Sym^{d} V \cong \bigoplus_{\nu} \bS^{\nu} V,
		\]
		where the sum is over all Young diagrams $\mu$ (resp.\ $\nu$) which are obtained by adding $d$ boxes to $\la$ with no two in the same row (resp.\ with no two in the same column). 
		\item Suppose $W$ is another rank $m$ vector bundle, then 
		\[
		\extp^{d}(V\otimes W) \cong \bigoplus_{\mu} \bS^{\mu} V \otimes \bS^{\mu^{\prime}} W, \quad 
		\Sym^{d}(V\otimes W) \cong \bigoplus_{\nu} \bS^{\nu} V \otimes \bS^{\nu} W,
		\]
		where the first sum is over all partitions $\mu$ of $d$ with at most $n$ rows and $m$ columns, and the second sum is over all Young diagrams $\nu$ with at most $\min\{n,m \}$ rows\footnote{When $\nu$ has more than $n$ or $m$ nonzero rows, at least one of $\bS^{\nu} V$, $\bS^{\nu} W$ vanishes. For this reason, we may also say that the second direct sum is taken over all Young diagrams.}.  
	\end{enumerate}
\end{thm}

\subsection{The Borel--Weil--Bott theorem}\label{sec:BWB}
Suppose $\cE$ is a rank $N$ vector bundle on a smooth projective variety $X$.
For $0\leq n\leq N$, consider the relative Grassmannian bundle $\pi: \Gr(n,\cE) \to X$. 
There is a short exact sequence of tautological vector bundles
    \begin{equation*}
        0\longrightarrow \cR \longrightarrow \cE \longrightarrow \cQ \longrightarrow 0, 
    \end{equation*}
where $\cR$ is the rank $n$ tautological subbundle and $\cQ = \cE/\cR$ is the rank $N-n$ quotient bundle. 
The Borel--Weil--Bott theorem tells us how to calculate the direct image of a Schur functor along $\pi$. 

\begin{thm}[{\cite[\S 4.1]{Wey}}]
    For any two non-increasing sequences of integers $\alpha = (\alpha_{1}, \cdots, \alpha_{n})$ and $\beta = (\beta_{1}, \cdots, \beta_{N-n})$, the direct image $\pi_{*}$ of the Schur functor $\bS^{\alpha}\cR^{\vee}\otimes \bS^{\beta} \cQ^{\vee}$ is equal to 
    \begin{equation*}
        \bS^{w \,\smallbullet\, (\alpha,\beta)} \cE^{\vee} [- \ell(w)]
    \end{equation*}
    if there is a unique Weyl group element $w\in S_{N}$ such that $w \,\smallbullet\, (\alpha,\beta) = w((\alpha,\beta)+\rho) - \rho$ is non-increasing, and zero otherwise. 
\end{thm}

Here $(\alpha,\beta)$ is the concatenation $(\alpha_{1},\cdots, \alpha_{n},\beta_{1},\cdots,\beta_{N-n})$ and $\rho = (N-1, \cdots, 0)$ is the (shifted) half sum of positive roots of $\GL(N)$.
In practice, the dot action from a transposition of the form $(n, n+1)$ operates as
\begin{equation*}
    (\alpha_{1},\cdots, \alpha_{n},\beta_{1},\cdots,\beta_{N-n}) \longmapsto (\alpha_{1},\cdots, \alpha_{n-1},\beta_{1}-1, \alpha_{n}+1, \beta_{2}, \cdots,\beta_{N-n})
\end{equation*}
whenever $\beta_{1}- \alpha_{n} >1$. 

\begin{rmk}\label{rmk:BWB}
	Using the isomorphism (\ref{eq:isomschur}), the direct image of $\bS^{\alpha}\cR \otimes \bS^{\beta} \cQ$ is equal to $\bS^{w \,\smallbullet\, (\beta, \alpha)} \cE [- \ell(w)]$ when the concatenation $(\beta, \alpha)$ is in the orbit of a dominant weight under the dot action. 
\end{rmk}

\subsection{The geometric categorical action}\label{sec:sl2}
The theory of the geometric categorical $\fsl(2)$ action is developed by Cautis, Kamnitzer, and Licata in a series of papers \cite{CKL-Duke,CKL-sl2, Cautis}. 

\begin{defn}\label{def:Hecke}
	For $0\leq m \leq n \leq N$, the \textit{Hecke correspondence} 
	\begin{equation*}
		\begin{tikzcd}
			\TGr{m}{N} & \arrow[l, "\pi_{m}"'] \fZ_{m,n} \arrow[r,"\pi_{n}"] & \TGr{n}{N}
		\end{tikzcd}
	\end{equation*} 
	is the total space of the Hom bundle $\Hom(\bC^{N}/V_{n},V_{m})$ over the partial flag variety $\mathrm{Fl}(m,n;N)$. 
	Here $\pi_{m}, \pi_{n}$ are the natural projections. 
\end{defn}

For $0\leq m \leq n \leq N$, consider the integral functors (\cite{CKL-sl2})
\begin{equation*}
	\begin{tikzcd}        
		\be^{n,m}: \Db(\TGr{n}{N}) \arrow[r,shift left] & \Db(\TGr{m}{N}): \bbf^{m,n}  \arrow[l,shift left]
	\end{tikzcd}
\end{equation*}
that are induced respectively by the kernels
\begin{align*}
	\cE^{n,m} = \cO_{\fZ_{m,n}} &\otimes \det(\bC^{N}/V_{n})^{m-n} \otimes \det(V_{m})^{n-m} \langle m(n-m) \rangle,\\ 
	\cF^{m,n} = \cO_{\fZ_{m,n}} &\otimes \det(V_{n}/V_{m})^{N-m-n} \langle (n-m)(N-n) \rangle
\end{align*}
via the Fourier--Mukai transforms
\begin{equation*}
	\be^{n,m}(-)  = \pi_{m,*} \left( \pi_{n}^{*}(-) \otimes \cE^{n,m} \right), \quad
	\bbf^{m,n}(-) = \pi_{n,*} \left( \pi_{m}^{*} (-) \otimes \cF^{m,n} \right). 
\end{equation*}
We also denote these functors and kernels by $\be^{(n-m)}, \bbf^{(n-m)}$ and $\cE^{(n-m)}$, $\cF^{(n-m)}$ when $n,m$ are obvious from the context.
The functors $\be,\bbf$ are both left and right adjoints of each other up to degree shifts (\cite[\S 2.1]{CKL}). 
We will mainly use the following counit 
\begin{align*}     
    \varepsilon: \bbf^{m,n}\be^{n,m}[-(n-m)(N-n-m)]\langle (n-m)(N-n-m) \rangle \longrightarrow \on{id}. 
\end{align*}
Moreover, the functor $\be$ or $\bbf$ composes with itself as follows: for $l\leq m\leq n$, 
\begin{align*}
    \be^{m,l} \be^{n,m} &\simeq \be^{n,l}\otimes H^{*}(\Gr(n-m, n-l)), \\
    \bbf^{m,n} \bbf^{l,m} &\simeq \bbf^{l,n}\otimes H^{*}(\Gr(m-l, n-l)). 
\end{align*}
We always shift the degree so that the cohomology ring is symmetric with respect to the 0th degree.
For example,
\[
H^{*}(\bP^{n}) \cong \bC[n]\langle -n \rangle \oplus \bC[n-2] \langle 2-n \rangle \oplus \cdots \oplus \bC[2-n] \langle n-2 \rangle \oplus \bC[-n]\langle n \rangle.
\] 

Now, assume $2n\leq N$. For each $0\leq i\leq n$, we consider the functors $\be^{(i)} = \be^{n,n-i}$, $\bbf^{(N-2n+i)} = \bbf^{n-i,N-n}$, and the composition $\Theta_{i} = \bbf^{(N-2n+i)} \be^{(i)} \langle i \rangle$. 
A differential map
\begin{equation*}
    d_{i}: \Theta_{i}[-i] \longrightarrow \Theta_{i-1}[-(i-1)]
\end{equation*}
can be defined in the following way
\[
\bbf^{(N-2n+i)} \be^{(i)}\langle 1 \rangle [-1] \longrightarrow \bbf^{(N-2n+i-1)} \bbf^{(1)}  \be^{(1)} \be^{(i-1)} \xlongrightarrow{\varepsilon}  \bbf^{(N-2n+i-1)} \be^{(i-1)}.
\] 
Here we first include $\be^{(i)}$ or $\bbf^{(N-2n+i)}$ into the lowest degree in the decomposition
\begin{align*}
    \be^{(1)}\be^{(i-1)} &\simeq \be^{(i)}\otimes H^{*}(\Gr(i-1,i)),\\
    \bbf^{(N-2n+i-1)} \bbf^{(1)} &\simeq \bbf^{(N-2n+i)} \otimes H^{*}(\Gr(1,N-2n+i))
\end{align*}
respectively. 
Next, we apply the counit to the two middle terms
\begin{equation*}
    \varepsilon: \bbf^{(1)}\be^{(1)} \longrightarrow \on{id}[N-2n+2i-1] \langle -N+2n -2i +1 \rangle. 
\end{equation*}
This construction gives rise to the \textit{Rickard complex} $\Theta_{\bullet} = (\Theta_{i}[-i], d_{i})$. 

A Postnikov system of the complex $\Theta_{\bullet}$ is a collection of distinguished triangles
\begin{equation*}
 \cC_{1}[-1] \xlongrightarrow{a_{1}} \Theta_{1}[-1] \xrightarrow{d_{1}} \Theta_{0}, \quad
 \cC_{i}[-i] \xlongrightarrow{a_{i}} \Theta_{i}[-i] \xlongrightarrow{b_{i}} \cC_{i-1}[-i+1]
\end{equation*}
such that $a_{i-1} \circ b_{i} = d_{i}$ for $2\leq i \leq n$. 
When it exists, the object $\cC_{n}$ is called a (right) \textit{convolution} of the complex. 

\begin{thm}[\cite{CKL-sl2}]\label{thm:CKL}
    The Rickard complex has a unique convolution, which defines a natural equivalence 
    \begin{equation*}
        \mathbb{T} = \bT_{n} = \on{Conv}(\Theta_{\bullet}): \Db(\TGr{n}{N}) \simrightarrow \Db(\TGr{N-n}{N}) 
    \end{equation*}
    for the local model of type $A$ stratified Mukai flops. 
\end{thm}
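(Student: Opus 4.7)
The strategy is to unfold $\bT$ via its definition as the unique convolution of the Rickard complex $\Theta_\bullet$ in Theorem \ref{thm:CKL}, and to compute the action on each of the relevant equivariant bundles. For $k=2$ the complex has only three terms $\Theta_0, \Theta_1[-1], \Theta_2[-2]$, so every step reduces to an explicit Borel--Weil--Bott computation on the Hecke correspondences $\fZ_{2-j,2}$ and $\fZ_{2-j,N-2}$.

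For the first claim $\bT(\bS^{\la} V_2) = \bS^{\la} V_2'$, the key input is a vanishing statement: for $\la$ in the range $-1 \leq \la_2 \leq \la_1 \leq N - 4$, one has $\be^{(j)}(\bS^{\la} V_2) = 0$ for $j \geq 1$. Since $\be^{(j)}$ is push-pull along the Hecke correspondence $\fZ_{2-j,2}$, which fibers over the partial flag variety $\mathrm{Fl}(2-j,2;N)$, this vanishing can be verified by Bott's theorem applied to the relevant homogeneous bundle. With the higher $\Theta_j$ eliminated, the Rickard complex for $\bS^{\la} V_2$ collapses to its $\Theta_0$-term, which is essentially $\bbf^{2,N-2}(\bS^{\la} V_2)$ (the kernel twist of $\bbf^{2,N-2}$ is trivial). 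A push-pull along $\fZ_{2,N-2}$, using the natural identification of $V_2$ with the quotient $\bC^N / V_{N-2} = V_2'$ on the open stratum of the correspondence, then delivers $\bS^{\la} V_2'$.

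For the second claim on $\bT(\cE_k)$, the philosophy is that $\bT$ preserves distinguished triangles, and by the uniqueness of convolutions (under the Hom-vanishing assumptions recalled in \S \ref{sec:con}) it thus carries unique convolutions to unique convolutions. Therefore $\bT(\cE_k)$ is realized as the total convolution of the bicomplex obtained by applying each $\Theta_j$ to each term $\cV_{k,i}$ of the complex defining $\cE_k$. I would compute the individual $\Theta_j(\cV_{k,i})$ via Borel--Weil--Bott on $\fZ_{2-j,2}$ (for the $\be^{(j)}$-step) and then on $\fZ_{2-j,N-2}$ (for the $\bbf$-step). The expected behavior is that, for each pair $(j,i)$, the bundle in question is either acyclic or produces a single summand of the shape $\det(V_2')^{a} \otimes \extp^{b} V_{N-2}$ concentrated in one cohomological degree. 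Collapsing the resulting bicomplex should then yield a single-row complex whose entries are exactly the $\cV'_{k,i}$, but reindexed in reverse order with the stated shifts.

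The main obstacle is the final reassembly. One must check that, after collapsing the bicomplex, the induced differentials between consecutive $\cV'_{k,i}$ terms are nonzero and $\GL(\bC^N)$-equivariantly unique up to scalar, so that the resulting convolution is well-defined and coincides with the stated complex. Equivariant uniqueness will follow from a dimension count, via Borel--Weil--Bott, of the $\GL(\bC^N)$-equivariant Hom space between $\cV'_{k,i}$ and $\cV'_{k,i+1}[1]$ on $\TGr{N-2}{N}$. Nonvanishing of the induced differentials will follow from the faithfulness of the equivalence $\bT$ applied to the nonvanishing of the original equivariant differentials in $\cE_k$. Together with the uniqueness of convolutions recalled in \S \ref{sec:con}, this identifies $\bT(\cE_k)$ with the convolution of the claimed complex.
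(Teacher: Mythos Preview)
Your proposal does not address Theorem~\ref{thm:CKL} at all. That theorem is a quoted background result from \cite{CKL-sl2} asserting that the Rickard complex admits a unique convolution and that this convolution is an equivalence; the paper does not prove it and simply cites it. Your proposal instead \emph{uses} Theorem~\ref{thm:CKL} as input (``unfold $\bT$ via its definition as the unique convolution of the Rickard complex'') and sketches a proof of Theorem~\ref{prop:tilting}, which is a different statement computing the images $\bT(\cE_\la)$ and $\bT(\cE_k)$.

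Even read as a proof attempt for Theorem~\ref{prop:tilting}, there is a genuine gap. You assert that $\be^{(j)}(\bS^{\la}V_2)=0$ for all $j\geq 1$ throughout the range $-1\leq\la_2\leq\la_1\leq N-4$, but this is false on the boundary $\la_2=-1$: by Lemma~\ref{lem:eimage} one has $\be^{(1)}(\cE_\la)=\bS^{\la_1+1}V_1\otimes\det(\bC^N/V_1)^{-1}\neq 0$ in that case. The paper does not get to collapse the Rickard complex to $\Theta_0$ here; it computes both $\Theta_1[-1]$ and $\Theta_0$ as explicit resolved complexes (Lemmas~\ref{lem:fimage}, \ref{lem:fil}) and then shows, via a careful analysis of the counit map in Lemma~\ref{lem:counit}, that the differential $d_1$ acts as the identity on the overlapping terms so that only $\bS^{\la_1,-1}V_2'$ survives the cone. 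The same subtlety recurs for $\cE_k$ when $k=0,-1$: there $\be^{(1)}(\cV_{k,0})$ (and for $k=-1$ even $\be^{(2)}(\cV_{-1,0})$) is nonzero, and the paper treats these cases by separate, substantially longer arguments tracking exactly which resolution terms cancel. Your bicomplex picture is the right organizing principle, but the claim that each $\Theta_j(\cV_{k,i})$ is either zero or a single bundle in one degree is not what happens; the actual outputs are resolved by multi-term complexes and the work lies in proving the requisite cancellations.
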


\subsection{Lascoux resolutions} \label{sec:torsion}
Let $G$ be a linearly reductive group acting on a vector space $T$, and let $U$ be a subspace of $T$ preserved by a parabolic subgroup $P$ of $G$.
Given a representation $V$ of $P$, we have an induced vector bundle $\cV= G \times_{P} V$ over $G/P$.
We will use the same letter to denote its pullback to the total space $T\times G/P$, as well as to the subspace $U\times_{P} G$. 
We are interested in the direct image of $\cV$ along 
\begin{equation*}
    \pi: U\times_{P}G \longrightarrow T, \quad (u,g)\longmapsto gu.
\end{equation*}
Once we factor the map $\pi$ through 
\begin{equation*}
    \begin{tikzcd}
        U\times_{P}G \arrow[r, hook, "i"] & T \times G/P \arrow[r, "q"] & T,
    \end{tikzcd}
\end{equation*}
where $i(u,g) = (gu, gP)$ and $q$ is the projection, we can apply the Koszul complex construction and compute the sheaf cohomology over $G/P$. 
The following theorem provides a Lascoux type resolution of this image (see also \cite[\S 6.1]{Wey}). 

\begin{thm}[{\cite[Theorem A.16]{DS}}]\label{thm:torsion}
    Suppose $\pi_{*}\cV$ is a sheaf concentrated in a single degree, then it has a $G$-equivariant resolution $\cF^{\bullet}$ where
    \begin{equation*}
        \cF^{-n} = \bigoplus_{\ell \geq n} R^{\ell -n}q_{*} \left(\cV \otimes \extp^{\ell} (T/U)^{\vee} \right).
    \end{equation*}
\end{thm}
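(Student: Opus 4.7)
The plan is to resolve $i_{*}\cV$ on $T\times G/P$ by a Koszul complex coming from the regular embedding $i: U\times_{P} G \hookrightarrow T\times G/P$, and then push forward along $p: T\times G/P\to T$ to extract the desired resolution of $\pi_{*}\cV$.

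First I would verify that $U\times_{P} G$ is the zero locus of a regular section of the vector bundle $\cQ := G\times_{P} (T/U)$ pulled back from $G/P$ to $T\times G/P$. Concretely the section sends $(t,gP)\mapsto g^{-1}t \bmod U$, which vanishes exactly when $t\in gU$. Since the codimension of $U\times_{P}G$ in $T\times G/P$ equals $\dim(T/U)$, which matches $\on{rank}\cQ$, regularity can be checked locally by choosing a $P$-stable complement $T=U\oplus W$: in the induced local trivialisation of $\cQ$ the section becomes the projection $T\to W$, visibly a regular sequence. This gives a $G$-equivariant Koszul resolution
\begin{equation*}
    0 \to \extp^{r}\cQ^{\vee} \to \cdots \to \cQ^{\vee} \to \cO_{T\times G/P} \to i_{*}\cO_{U\times_{P}G} \to 0,
\end{equation*}
where $r=\dim(T/U)$. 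Tensoring with the pullback of $\cV$ and applying the projection formula yields a Koszul resolution of $i_{*}\cV$ whose $-\ell$-th term is $\cV\otimes\extp^{\ell}\cQ^{\vee}$.

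Next, I would apply $Rp_{*}$ to this resolution. Because $G/P$ is proper, $T$ is affine, and every $\cV\otimes\extp^{\ell}\cQ^{\vee}$ is pulled back from $G/P$, flat base change gives
\begin{equation*}
    R^{j}p_{*}\!\left(\cV\otimes\extp^{\ell}\cQ^{\vee}\right) \;=\; H^{j}\!\left(G/P,\;\cV\otimes\extp^{\ell}(T/U)^{\vee}\right)\otimes_{\bC}\cO_{T},
\end{equation*}
and the two spectral sequences of the resulting double complex both abut to $R^{\bullet}(p\circ i)_{*}\cV = R^{\bullet}\pi_{*}\cV$.

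By hypothesis $R^{\bullet}\pi_{*}\cV$ is concentrated in a single degree, which we may normalise to be zero. Consequently the total complex of the double complex is a complex of $\cO_{T}$-modules whose only nonvanishing cohomology sheaf is $\pi_{*}\cV$ in degree $0$, i.e.\ a resolution of $\pi_{*}\cV$. Collecting the contributions of total degree $-n=j-\ell$ gives precisely
\begin{equation*}
    \cF^{-n} \;=\; \bigoplus_{\ell\geq n} H^{\ell-n}\!\left(G/P,\;\cV\otimes\extp^{\ell}(T/U)^{\vee}\right),
\end{equation*}
and the resolution is $G$-equivariant since the Koszul complex, the pullback of $\cV$, the pushforward $p_{*}$, and the sheaf cohomology on $G/P$ are all manifestly $G$-equivariant constructions. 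The main obstacle is just the regularity of the Koszul section (handled above); once that is in place, everything else is a formal consequence of commuting $Rp_{*}$ with the Koszul resolution under the single-degree hypothesis.
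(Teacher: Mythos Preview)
The paper does not give a proof of this statement: it is quoted verbatim from \cite[Theorem A.16]{DS} and used as a black box. So there is no in-paper argument to compare against. Your outline is essentially the standard proof (Koszul resolution of $i_{*}\cV$ on $T\times G/P$, then push forward along $p$), which is exactly the mechanism behind the cited result and also Weyman's geometric technique.

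Two small points are worth tightening. First, your regularity argument invokes a ``$P$-stable complement'' $T=U\oplus W$; since $P$ is parabolic and not reductive, such a complement need not exist. This is harmless: $T\times G/P$ is smooth (hence Cohen--Macaulay), the zero locus of the section has codimension exactly $\operatorname{rank}\cQ=\dim(T/U)$, so the Koszul complex is exact regardless. Second, the passage from ``the two spectral sequences of the double complex'' to an honest complex with terms $\cF^{-n}=\bigoplus_{\ell\ge n}H^{\ell-n}(\dots)\otimes\cO_{T}$ uses more than convergence of a spectral sequence: you need that $T$ is affine and each $R^{j}p_{*}(\cV\otimes\extp^{\ell}\cQ^{\vee})$ is a \emph{free} $\cO_{T}$-module, so that $Rp_{*}$ of each Koszul term is formal (homotopy equivalent to its cohomology placed in the correct degrees). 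Replacing each column by its cohomology then produces a genuine complex of free modules with the asserted terms, and the single-degree hypothesis forces it to be a resolution. You have all the ingredients; just make explicit that freeness over the affine base is what lets you trade the \v Cech--Koszul total complex for the one with the stated terms.
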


Note that this construction also works in families: instead of vector spaces $U\subset T$, we may work with vector bundles $\cU\subset \cT$ relatively over a base scheme. 
For example, the projection $\pi_{n}: \fZ_{m,n} \to \TGr{n}{N}$ from the Hecke correspondence (Definition~\ref{def:Hecke}) is constant along the tautological direction $V_{n}\hookrightarrow \bC^{N}$.
Thus, the relative part of $\pi_{n}$ above $\Gr(n,N)$ is given exactly by the above map $\cU \times_{P} G \to \cT$ for
\[
\cT = \Hom(\bC^{N}/V_{n}, V_{n}),\quad \cU = \Hom(\bC^{N}/V_{n},V_{m}), \quad G/P \cong \Gr(V_{m},V_{n}).
\]

\section{Tilting bundle}

\subsection{The construction}\label{sec:con}
Let $G = \GL(\bC^{N})$ and $\tG = G\times\Cx$. 

\begin{thm}\phantomsection\label{thm:tilting}
    A tilting bundle $\cE$ on $\TGr{2}{N}$ is given by the direct sum of the following $\binom{N}{2}$ indecomposable $\tG$-equivariant vector bundles.  
    \begin{enumerate}[wide, labelindent=0pt, label = (\roman*)]
        \item The Schur functors $\cE_{\la} = \bS^{\la}V_{2}$ for $-1 \leq \la_{2}\leq \la_{1} \leq N-4$. 
        
        \item For each $-1 \leq k \leq N-3$, the convolution $\cE_{k} (= \cE_{N-3,k})$ of a complex of the form
        \begin{equation}\label{eq:complex}
            \cV_{k,-n^{-}_{k}}[-n^{+}_{k}-n^{-}_{k}] \xlongrightarrow{d} \cdots \xlongrightarrow{d} \cV_{k,i} [-n_{k}^{+} + i] \xlongrightarrow{d} \cdots \xlongrightarrow{d} \cV_{k,n^{+}_{k}}[0], 
        \end{equation}
        where $n^{+}_{k} = \lfloor (N-3-k)/2 \rfloor$, $n^{-}_{k} = \lfloor (k+1)/2 \rfloor$, and 
        \begin{equation}\label{eq:shift}
            \cV_{k,i} = \det(V_{2})^{k+i} \otimes \extp^{N-3-k-2i} \bC^{N}/V_{2}  \langle - (N-2-k-2i) \rangle
        \end{equation}
        for $i=-n_{k}^{-},\cdots,0,\cdots, n_{k}^{+}$.
    \end{enumerate}
\end{thm}

The differentials of the complex (\ref{eq:complex}) are chosen in Corollary~\ref{lem:diff} below.
As we will see, each extension class is uniquely determined by its equivariant structure, which can also be read from the internal degree shift of each $\cV_{k,i}$ in (\ref{eq:shift}). 
By Corollary~\ref{lem:exist}, \textit{the} convolution $\cE_{k}$ is well-defined. 
The notation $\cE_{k} = \cE_{N-3,k}$ suggests that these convolutions are intended to replace the Schur functors $\bS^{N-3,k}V_{2}$, $-1\leq k \leq N-3$ from Kapranov's collection $\{ \bS^{\la}V_{2} \mid -1 \leq \la_2 \leq \la_1 \leq N-3 \}$.

\begin{lem}\label{lem:vanext}
	For any $i,j$, we have
	\begin{equation*}
    	\Ext^{> \delta_{1j} }(\cV_{k,i}, \cV_{k,i+j}) = 0, \quad \Ext^{1}  = (\cV_{k,i}, \cV_{k,i+1})^{\tG} \cong \bC.
	\end{equation*}
\end{lem}

\begin{proof}
	These groups are calculated by
	\begin{align*}
		\Ext^{\bullet}_{T^{*}\Gr}(\cV_{k,i}, \cV_{k,i+j}) & \cong H^{\bullet}_{\Gr} \left( \cV_{k,i}^{\vee}\otimes \cV_{k,i+j}\otimes \on{Sym}( \bC^{N}/V_{2} \otimes V_{2}^{\vee} ) \right) \\
		& \cong \bigoplus_{\mu} H^{\bullet}_{\Gr} \left( \cV_{k,i}^{\vee}\otimes \cV_{k,i+j}\otimes \bS^{\mu}V_{2}^{\vee}\otimes \bS^{\mu}\bC^{N}/V_{2} \right)
	\end{align*}
	over all Young diagrams $\mu =(\mu_1, \mu_2)$.
	By Pieri's formula (Theorem~\ref{thm:LR}), there is a decomposition
	\[
	\extp^{N-3-k-2i} (\bC^{N}/V_{2})^{\vee} \otimes\extp^{N-3-k-2i-2j} \bC^{N}/V_{2} \cong \bigoplus_{\nu} \bS^{\nu} (\bC^{N}/V_{2})^{\vee},
	\] 
	where each $\nu$ is of the form $(1,\cdots,1,0,\cdots,0,-1,\cdots,-1)$ with $|\nu|=2j$ and at least $2j$ of its coordinates equal to $1$ (when $j\geq 0$, of course). 
	To apply the Borel--Weil--Bott theorem, we consider the concatenation weight 
	\begin{equation}\label{eq:conwtdif}
		(\mu_{1}-j,\mu_{2}-j, \nu\otimes -\mu),
	\end{equation}
	where by abuse of notation, we use $\nu\otimes -\mu$ to denote all possible weights appearing in the decomposition of the tensor product $\bS^{\nu} (\bC^{N}/V_{2})^{\vee} \otimes \bS^{(0,\cdots,0,-\mu_{2},-\mu_{1})} (\bC^{N}/V_{2})^{\vee}$ according to the Littlewood--Richardson rule (\cite[\S 2.3]{Wey}). 
	When $\mu_{2}-j \geq 0$, the weight (\ref{eq:conwtdif}) is either already dominant or never in a dominant orbit. 
	So there are no higher extensions in this case. 
	Next we assume $\mu_{2}-j < 0$.
	To obtain a nonzero $H^{>0}$, we need to permute the coordinate $\mu_{2}-j$ rightward step by step under the dot actions of consecutive transpositions to obtain a dominant weight.
	Because there are at least $2j$ coordinates $1$ in $\nu$, there are at least $2j-2$ (resp.\ $2j-1$) coordinates $1$ in any $\nu\otimes -\mu$ if $\mu_{2}\neq 0$ (resp.\ $\mu_{2}=0$) by the Littlewood--Richardson rule. 
	This means unless $j=1$, whenever $\mu_{2}-j$ is permuted into $0$ by $j-\mu_{2}$ consecutive transpositions, there are still $1$'s after it, and in that case the weight cannot be further permuted into a dominant weight. 
	However, if $j=1$ (and necessarily $\mu_{2}=0$ to meet the assumption $\mu_{2}-j <0$), we can obtain dominant weights of the form $(\mu_{1}-1, 0,\cdots, 0, -\mu_{1}+1)$ for all $\mu_{1}\geq 1$, with $\nu = (1,1,0,\cdots,0)$ and $\nu\otimes -\mu = (1,0,\cdots, 0,-\mu_{1}+1)$.  
	These weights contribute to all
	\begin{equation*}\label{eq:Ext1}
		\Ext^{1}_{T^{*}\Gr}(\cV_{k,i},\cV_{k,i+1}) \cong \bigoplus_{\mu_{1} \geq 1} \bS^{(\mu_{1}-1,0,\cdots,0,-\mu_{1}+1)} (\bC^{N})^{\vee}. 
	\end{equation*}
	The $G$-invariant module $\bS^{(0,\cdots,0)}(\bC^{N})^{\vee}$ only occurs with multiplicity one at $\mu_{1}=1$.
	It is also of $\Cx$-weight two, which aligns with our choice of the internal degree shift in (\ref{eq:shift}). 
\end{proof}

\begin{cor}\label{lem:diff}
	There exists a unique (up to scalar multiples) nonzero $\tG$-equivariant morphism 
	\[d: \cV_{k,i}[-1]\longrightarrow \cV_{k,i+1}\] 
	for every $k,i$. 
	Moreover, the morphisms satisfy $d\circ d[-1]=0$. 
\end{cor}

\begin{proof}
	It suffices to show  that
	\begin{equation*}
		\Ext^{1}(\cV_{k,i}, \cV_{k,i+1})^{\tG} \cong \bC, \quad \Ext^{2}(\cV_{k,i}, \cV_{k,i+2}) = 0. 
	\end{equation*}
	These have been covered by Lemma~\ref{lem:vanext}. 
\end{proof}

\begin{cor}\label{lem:exist}
    The convolution $\cE_{k}$ uniquely exists.
\end{cor}

\begin{proof}
    By \cite[Proposition 8.3]{CK}, it is enough to check that for all $i$ 
    \begin{equation*}
        \Ext^{1}(\cV_{k,i}, \cV_{k,i+1+j}) = 0 = \Ext^{2}(\cV_{k,i}, \cV_{k,i+2+j}), \quad j\geq 1.
    \end{equation*}
    Again, these have been proved in Lemma~\ref{lem:vanext}. 
\end{proof}

\subsection{Generation}\label{sec:gen}
Let $\mathnormal{D}\mathrm{Qcoh}(T^{*}\Gr(2,N))$ be the unbounded derived category of quasi-coherent sheaves. 
On $\TGr{2}{N}$, there is a short exact sequence of tautological vector bundles
\begin{equation*}
	0 \longrightarrow V_{2} \longrightarrow \bC^{N} \longrightarrow \bC^N / V_{2} \longrightarrow 0. 
\end{equation*}
Any exterior power $\extp^{d} \bC^{N}$ then has a natural filtration with associated graded pieces \cite[\S 2.3]{Wey}
\begin{equation*}
	(\extp^2 V_2 \otimes \extp^{d-2}\bC^N / V_{2}) \oplus (\extp^1 V_2 \otimes \extp^{d-1} \bC^N / V_{2}) \oplus
	(\extp^{d} \bC^N / V_{2}). 
\end{equation*}
We will apply this filtration repeatedly to deduce the generation property.

\begin{lem}\label{lem:gen}
    The collection $\{\cE_{\la},\cE_{k}\}_{\la,k}$ spans $\mathnormal{D}\mathrm{Qcoh}(T^{*}\Gr(2,N))$. 
\end{lem}

This means, if $A$ is an object in $\mathnormal{D}\mathrm{Qcoh}(T^{*}\Gr)$ such that for all $\la, k$
    \begin{equation}\label{eq:gen}
        R \Hom(\cE_{\la},A) = 0 = R \Hom(\cE_{k},A),
    \end{equation}
    then $A=0$.

\begin{proof}
    We will show that (\ref{eq:gen}) implies $R\Hom(\bS^{N-3,k}V_{2},A) = 0$ for $-1\leq k \leq N-3$.
    As a result, we obtain $A=0$ since the collection $\{\cE_{\la}, \bS^{N-3,k}V_{2}\}_{\la, k}$ is a shift (by $-\otimes \cO(1)$) of the original Kapranov's collection $\{\bS^{\la} V_{2} \mid 0\leq \la_2 \leq \la_1 \leq N-2\}$, which in turn spans $\mathnormal{D}\mathrm{Qcoh}(T^{*}\Gr)$ due to the fact that it is pulled back from the underlying Grassmannian along the {affine} bundle map (see, e.g.,
    \ \cite[Lemma 3.1]{Hara-Abuaf}). 
    Set 
    \[
    \cV_{\mu_{1},\mu_{2},d} := \bS^{\mu_{1},\mu_{2}}V_{2} \otimes \extp^{d} \bC^{N}/V_{2}.
    \] 
    
    \begin{clm}
        Assume $R \Hom(\cE_{\la},A) = 0$ for all $\la$, then $R\Hom(\cV_{k,i},A) = 0$ for any $k$ and $i\neq 0$.
    \end{clm}
    
    \begin{proof}[Proof of claim]\renewcommand{\qedsymbol}{}
        In fact, we will use induction to prove a more general statement: 
        \begin{equation}\label{eq:claim}
            R\Hom(\cV_{\mu_{1},\mu_{2},d} ,A) = 0
        \end{equation} 
        if $N-2-d \leq \mu_{2} \leq \mu_{1}\leq N-4$ or $-1\leq \mu_{2}\leq \mu_{1}\leq N-4-d$. 
        In the former case, the descending induction begins at $d=N-2$, where
        \[\cV_{\mu_{1},\mu_{2},N-2} \cong \cE_{\mu_{1}-1,\mu_{2}-1} \otimes \det(\bC^{N}). \] 
        By assumption (\ref{eq:gen}), we immediately have $R\Hom(\cE_{\mu_{1}-1,\mu_{2}-1},A)=0$. 
        For any smaller $d$, there is a natural filtration on $\bS^{\mu_{1}-1,\mu_{2}-1} V_{2} \otimes \extp^{d+2} \bC^{N}$ whose associated graded pieces are
        \[\cV_{\mu_{1}-1,\mu_{2}-1,d+2} \oplus (\cV_{\mu_{1}-1,\mu_{2}-1,d+1} \otimes V_{2}) \oplus \cV_{\mu_{1},\mu_{2},d}. \]
        By induction, the functor $R\Hom(-,A)$ vanishes on the first three aforementioned bundles. 
        Hence, it also vanishes on the last one $R\Hom(\cV_{\mu_{1},\mu_{2},d},A) =0$ by the long exact sequences. 
        When $-1\leq \mu_{2}\leq \mu_{1}\leq N-4-d$, the induction is run increasingly over $d$. 
        The $d=0$ case is covered by the assumption of this claim as $\cV_{\mu_{1},\mu_{2},0} = \cE_{\mu_{1},\mu_{2}}$. 
        For $d>0$, there is a filtration on $\bS^{\mu_{1},\mu_{2}} V_{2} \otimes \extp^{d} \bC^{N}$ whose associated graded pieces are 
        \[ \cV_{\mu_{1}+1,\mu_{2}+1,d-2} \oplus (\cV_{\mu_{1},\mu_{2},d-1} \otimes V_{2}) \oplus \cV_{\mu_{1},\mu_{2},d}. \] 
        By induction, the first three aforementioned bundles have no nonzero $R\Hom$ to $A$, hence so does the last one.  
    \end{proof}

    Now, we notice that the bundles 
    \[\cV_{k,0} = \cV_{k,k,N-3-k},\quad \cV_{k+1,k,N-4-k},\quad \cV_{k,1} = \cV_{k+1,k+1,N-5-k}\]
    are the associated graded pieces of a filtration on $\det(V_{2})^{k}\otimes \extp^{N-3-k}\bC^{N}$.
    By (\ref{eq:claim}), we have $R\Hom(-,A)=0$ on the last two aforementioned bundles, hence
    \[R\Hom(\cV_{k,0},A)=0 \iff R\Hom(\cV_{k+1,k,N-4-k},A)=0. \] 
    Repeat this process, we see that $R\Hom(-,A)$ vanishes on $\cV_{k+1,k,N-4-k}$ iff it vanishes on $\cV_{k+2,k,N-5-k}$, and so on until we reach $\cV_{N-3,k,0} = \bS^{N-3,k}V_{2}$. 
    In the end, we have $R\Hom(\cV_{k,0},A) = 0$ iff $R\Hom(\bS^{N-3,k}V_{2},A) = 0$. 
\end{proof}

Now, by a general result \cite[\S\S2-3]{BVdB}, Lemma~\ref{lem:gen} implies that the bundle 
\[
\cE = \bigoplus_{\la}\cE_{\la}\oplus\bigoplus_{k}\cE_{k}
\]
is a classical generator of the compact objects $\on{Perf}(T^{*}\Gr) = \Db(T^{*}\Gr)$ inside the larger category $\mathnormal{D}\mathrm{Qcoh}(T^{*}\Gr)$. 

\begin{cor}\label{cor:gen}
	The bundle $\cE$ is a classical generator of $\Db(\TGr{2}{N})$. 
\end{cor}

\begin{rmk}
	Up to taking the dual and tensoring with a power of $\cO(1)$, the collection $\{\cE_{\la}, \cV_{k,0}\}_{\la, k}$ coincides with Kaneda's full strongly exceptional collection \cite{Kaneda} on $\Gr(2,\mathbb{k}^{N})$, which also appears as subquotients of the Frobenius image of $\cO_{\Gr}$ over a field $\mathbb{k}$ of large positive characteristic; see \cite[\S 16.6]{RSVdB}. 
\end{rmk}

\subsection{Ext vanishing}\label{sec:extvan}
We now compute the extension groups case by case. 

\begin{lem}\label{lem:van1}
	We have $\Ext^{>0}(\cE_{\la},\cE_{\la^{\prime}} ) =0$ for any $\la, \la^{\prime}$. 
\end{lem}

\begin{proof}
	We can compute
	\[
	\Ext^{\bullet}_{T^{*}\Gr} (\cE_{\la},\cE_{\la^{\prime}}) \cong \bigoplus_{\mu} H^{\bullet}_{\Gr}\left( \bS^{\la_{1},\la_{2}}V_{2}^{\vee}\otimes \bS^{-\la_{2}^{\prime},-\la_{1}^{\prime}}V_{2}^{\vee}\otimes \bS^{\mu}V_{2}^{\vee} \otimes \bS^{\mu} \bC^{N}/V_{2} \right)
	\]
	over all Young diagrams $\mu=(\mu_{1},\mu_{2})$. 
	By the Littlewood--Richardson rule, there is a decomposition 
	\[\bS^{\la_{1},\la_{2}}V_{2}^{\vee}\otimes \bS^{-\la_{2}^{\prime},-\la_{1}^{\prime}}V_{2}^{\vee} \cong \bigoplus_{\nu} \bS^{\nu} V_{2}^{\vee} \] 
	where each $\nu$ satisfies $\nu_{2} \geq \la_{2}-\la_{1}^{\prime} \geq 3-N$. 
	To apply the Borel--Weil--Bott theorem, we consider the concatenation weights $(\nu\otimes \mu, -\mu)$. Here we abuse the notation to consider all possible weights appearing in the decomposition of $\bS^{\nu}V_{2}^{\vee} \otimes \bS^{\mu}V_{2}^{\vee}$ according to the Littlewood--Richardson rule. 
	If $\nu_{2}\geq 0$, the weight is always dominant and there is no nonvanishing higher cohomology. 
	Otherwise, we may permute the weight to obtain a nonzero $H^{>0}$. 
	In that case, the permutation must have length at least $N-3$ when $\mu_{2} = 0$.  
	This requires at least $\nu_{2}+N-3 \leq -1$, which contradicts $\nu_{2}\geq 3-N$. 
	Similarly, when $\mu_{2}>0$, the permutation should have length at least $N-4$, and that requires at least  $\nu_{2}+\mu_{2}+N-4 \leq -1$, a contradiction again. 
\end{proof}

\begin{lem}\label{lem:van2}
	For any $\la,k$, we have
	\[
	\Ext^{>0} (\cE_{\la},\cE_{k} ) = 0 = \Ext^{>0} (\cE_{k},\cE_{\la} ).
	\]
\end{lem}
    
\begin{proof}
    We first compute 
    \[
    \Ext^{\bullet}_{T^{*}\Gr} (\cE_{\la},\cV_{k,i}) \cong \bigoplus_{\mu} H^{\bullet}_{\Gr} \left( \bS^{\la} V_{2}^{\vee}\otimes \cV_{k,i} \otimes \bS^{\mu}V_{2}^{\vee}\otimes \bS^{\mu} \bC^{N}/V_{2}\right), 
    \]
    which amounts to consider concatenation weights of the form 
    \begin{equation}\label{eq:conwtlk}
        \left(\la \otimes \mu \otimes (-k-i,-k-i), ((-1)^{N-3-k-2i}) \otimes -\mu \right).
    \end{equation}
    Here $((-1)^{n})$ denotes the weight that has $- 1$ on its last $n$ coordinates and zero elsewhere.
    We also abuse the notation $\la\otimes \mu$ to denote all possible weights appearing in the decomposition  
    \[\bS^{\la}V_{2}^{\vee} \otimes \bS^{\mu} V_{2}^{\vee}  \cong \bigoplus_{t} \bS^{\la_{1}+\mu_{1}-t, \la_{2}+\mu_{2}+t} V_{2}^{\vee}, \]
    $0\leq t \leq \min \{\la_{1}-\la_{2}, \mu_{1}-\mu_{2}\}$, by the Littlewood--Richardson rule.
    There are three situations where  (\ref{eq:conwtlk}) can be permuted into a dominant weight. 
    \begin{enumerate}[wide, labelindent=0pt, label = (\roman*)]
        \item The permutation reaches the longest possible length, i.e.\ when the coordinate $\la_{2} + \mu_{2} -k-i + t$ is permuted rightward by $k+2i+1$ consecutive transpositions into $-1$. 
        This gives rise to 
        \begin{equation}\label{eq:H0}
            H^{k+2i+1} \cong \bigoplus_{\mu } \bigoplus_{t} \bS^{(\la_{1}+\mu_{1}-k-i-t, -1,\cdots, -1, -1-\mu_{2},-1-\mu_{1})} (\bC^{N})^{\vee}, 
        \end{equation}
        where $\mu_{1}-t\geq k+i-1-\la_{1}$. 
        In this case, $i = -\la_{2}-\mu_{2}-2-t$ ranges from $-\la_{2}-2$ to $-n_{k}^{-}$ (if $k$ is even) or $-n_{k}^{-}+1$ (if $k$ is odd). 

        \item The permutation has length $k+2i$ and $\mu_{1}$ is necessarily nonzero. 
        In this case, the cohomology is 
        \begin{equation}\label{eq:H1}
            H^{k+2i} \cong \bigoplus_{\mu_{1}\neq 0} \bigoplus_{t} \bigoplus_{s=0,1} \bS^{(\la_{1}+\mu_{1}-k-i-t, -1,\cdots, -1, -1-\mu_{2}+1-s,-1-\mu_{1}+s)} (\bC^{N})^{\vee},
        \end{equation}
        where $\mu_{1}-t\geq k+i-1-\la_{1}$, and $s$ takes $0$ only if $\mu_{2}>0$ and $1$ only if $\mu_{1}>\mu_{2}$. 
        In this case, $i = -\la_{2}-\mu_{2}-1-t$ ranges from $-\la_{2}-1$ to $-n_{k}^{-}+1$. 

        \item The permutation has length $k+2i-1$ and $\mu_{2}$ is necessarily nonzero. 
        This gives 
        \begin{equation}\label{eq:H2}
            H^{k+2i-1} \cong \bigoplus_{\mu_{2}\neq 0} \bigoplus_{t} \bS^{(\la_{1}+\mu_{1}-k-i-t, -1,\cdots, -1, -1-\mu_{2}+1,-1-\mu_{1}+1)} (\bC^{N})^{\vee},
        \end{equation}
        where $\mu_{1}-t\geq k+i-1-\la_{1}$. 
        In this case, $i = -\la_{2}-\mu_{2}-t$ ranges from $-\la_{2}-1$ to $-n_{k}^{-}+1$ (if $k$ is even) or $-n_{k}^{-}+2$ (if $k$ is odd). 
    \end{enumerate}
    
    In summary, all nonvanishing higher extensions are of the form
    \begin{equation}\label{eq:nonvaext}
        \Ext^{k+2i+j}(\cE_{\la},\cV_{k,i}) \neq 0,
    \end{equation}
    where $-n_{k}^{-} + \lceil k/2 \rceil - \lfloor k/2 \rfloor \leq i \leq -\la_{2}-1$ and $j$ takes the following values: $j=-1,0$ at $i=-\la_{2}-1$; $j=0,1$ at $i=-n_{k}^{-}+1$ ($k$ odd); $j=1$ at $i=-n_{k}^{-}$ ($k$ even); $j=-1,0,1$ at all other $i$. 
    In particular, this implies that there are no higher extensions of $\cE_{\la}$ by $\cV_{k,i}$ for $i> -\la_{2}-1$.
    Thus, the problem is reduced to the extensions of $\cE_{\la}$ by the iterated cones     
    \[
    \cC_{\alpha}:= \on{Conv}\left( \cV_{k,-\la_{2}-1-\alpha}[-\alpha] \to \cdots \to \cV_{k, -\la_{2}-1}\right), \quad \alpha = 1,\cdots, -\la_{2}-1+n_{k}^{-}.
    \]
    We will use induction to show that $\Ext^{>0}(\cE_{\la}, \cC_{-\la_{2}-1+n_{k}^{-}}) = 0$. 
    
    Set 
    \[
    \rH^{\bullet}(-)  = \Ext^{\bullet}(\cE_{\la},-), \quad \kappa = k-2\la_{2}-2.
    \] 
    In fact, we only need to look at $\rH^{\leq \kappa}$ since $\rH^{>\kappa}(\cV_{k,i})=0$ for all $i$ by (\ref{eq:nonvaext}).
    Also from (\ref{eq:nonvaext}) we know that
    \[\rH^{\kappa-2(\alpha-1)}(\cV_{k,i}) = 0,\quad \rH^{\kappa-2(\alpha-1) -1}(\cV_{k,i}) = 0, \quad  \rH^{\kappa-2(\alpha-1)-2}(\cV_{k,i})= 0 \]  for $i\leq -\la_{2}-2-\alpha$.
    Hence, we can further reduce the problem to
        \begin{align*} 
        \rH^{\kappa-2(\alpha-1)}(\cC_{-\la_{2}-1+n_{k}^{-}}) & \cong \rH^{\kappa-2(\alpha-1)} (\cC_{\alpha}),\\ 
        \rH^{\kappa-2(\alpha-1)-1}(\cC_{-\la_{2}-1+n_{k}^{-}}) & \cong \rH^{\kappa-2(\alpha-1)-1} (\cC_{\alpha}).
        \end{align*}
    As $\rH^{\kappa-2\alpha+2}(\cV_{k,-\la_{2}-1-\alpha}) = 0$, we have a long exact sequence
    \begin{align*}
        \cdots \longrightarrow \rH^{\kappa-2\alpha}(\cV_{k,-\la_{2}-1-\alpha}) &\xlongrightarrow{\delta_{1}}  \rH^{\kappa -2\alpha+1}(\cC_{\alpha-1}) \longrightarrow  
          \rH^{\kappa-2\alpha+1}(\cC_{\alpha}) \longrightarrow 
          \\ \rH^{\kappa-2\alpha+1}(\cV_{k,-\la_{2}-1-\alpha}) &\xlongrightarrow{\delta_{0}} 
        \rH^{\kappa-2\alpha+2}(\cC_{\alpha-1}) \longrightarrow
        \rH^{\kappa-2\alpha+2}(\cC_{\alpha}) \longrightarrow 
        0. 
    \end{align*}
    From (\ref{eq:nonvaext}) again, we know that $\rH^{\kappa-2\alpha+1}$ and $\rH^{\kappa-2\alpha+2}$ vanish on $\cV_{k,i}$ for $i > -\la_{2}-\alpha$.
    Hence, they also vanish on $\cC_{\alpha-2}$, and we can deduce 
    \begin{equation}\label{eq:HH}
    \rH^{\kappa-2\alpha+1} (\cC_{\alpha-1})  \cong \rH^{\kappa-2\alpha+1} (\cV_{k,-\la_{2}-\alpha})
    \end{equation}
    from the long exact sequence of the triangle $\cC_{\alpha-2} \to \cC_{\alpha-1} \to \cV_{k,-\la_{2}-\alpha}$. 
    Now by Claim \ref{clm:conn} below, the connecting morphism 
    \[\delta_{1}: \rH^{\kappa-2\alpha}(\cV_{k,-\la_{2}-1-\alpha}) \longrightarrow  \rH^{\kappa-2\alpha+1} (\cV_{k,-\la_{2}-\alpha})
    \]
    is surjective. 
    On the other hand, we have $\rH^{\kappa-2\alpha+3}(\cC_{\alpha-1}) =0$ by induction, so a short exact sequence
    \[
    0 \longrightarrow \rH^{\kappa-2\alpha+2}(\cC_{\alpha-1}) \longrightarrow \rH^{\kappa-2\alpha+2}(\cV_{k,-\la_{2}-\alpha}) \longrightarrow \rH^{\kappa-2\alpha+3}(\cV_{k,-\la_{2}-\alpha+1}) \longrightarrow 0
    \]
    can be extracted from the long exact sequence of the same triangle. 
    Here the right end term comes from the isomorphism (\ref{eq:HH}) by replacing the index $\alpha$ there with $\alpha-1$.  
    By Claim \ref{clm:conn} again, there is also a short exact sequence 
    \[
    0 \longrightarrow \rH^{\kappa-2\alpha+1}(\cV_{k,-\la_{2}-1-\alpha}) \longrightarrow \rH^{\kappa-2\alpha+2}(\cV_{k,-\la_{2}-\alpha}) \longrightarrow \rH^{\kappa-2\alpha+3}(\cV_{k,-\la_{2}-\alpha+1}) \longrightarrow 0. 
    \]
    As the connecting $\delta_{0}$ defines a compatible morphism between the two short exact sequences, it must be an isomorphism. 
    As a result, we assert 
    \[\rH^{\kappa-2\alpha+2}(\cC_{-\la_{2}-1+n_{k}^{-}})= 0=\rH^{\kappa-2\alpha+1}(\cC_{-\la_{2}-1+n_{k}^{-}}), \]
    and the induction goes on to lower degrees. 
    
    \begin{clm}\label{clm:conn}
        The differential $d$ induces a short exact sequence
        \begin{align*}
            0 \longrightarrow \rH^{k+2(i-1)+1}(\cV_{k,i-1}) \xlongrightarrow{\rH^{\bullet} (d)} \rH^{k+2i}(\cV_{k,i}) \xlongrightarrow{\rH^{\bullet} (d)} \rH^{k+2(i+1)-1}(\cV_{k,i+1}) \longrightarrow 0
        \end{align*}
        for each $k,i$.
    \end{clm}
    Note that the degrees here are written in the form of $k+2i+j$ as in (\ref{eq:nonvaext}). 

    \begin{proof}[Proof of claim]\renewcommand{\qedsymbol}{}
        These groups are computed explicitly in (\ref{eq:H0}) (by replacing the index $i$ there with $i-1$), (\ref{eq:H1}), and (\ref{eq:H2}) (by replacing the index $i$ there with $i+1$) respectively. 
        In all three cases, $\mu_{2}+t$ is consistently equal to $-\la_{2}-1-i$, which means for a fixed $0 \leq t \leq \min\{\la_{1}-\la_{2},\mu_{1}-\mu_{2}\}$, the digit $\mu_{2}$ is uniquely determined. 
        The middle term (\ref{eq:H1}) is the direct sum of two parts for $s=0$ ($\mu_{2}> 0$) and $s=1$ ($\mu_{1}> \mu_{2}$)
        \begin{align}
            \bigoplus_{\mu_{1} \geq \mu_{2}> 0} \bigoplus_{t} \bS^{(\la_{1}+\mu_{1}-k-i-t, -1,\cdots, -1, -1-\mu_{2}+1,-1-\mu_{1})} (\bC^{N})^{\vee}, \label{eq:i+1} \\
            \bigoplus_{\mu_{1} > \mu_{2} \geq 0} \bigoplus_{t}  \bS^{(\la_{1}+\mu_{1}-k-i-t, -1,\cdots, -1, -1-\mu_{2},-1-\mu_{1}+1)} (\bC^{N})^{\vee}. \label{eq:i-1}
        \end{align}
        By a change of variable $\mu_{1} \mapsto \mu_{1}-1$ in (\ref{eq:i+1}), a change of variable $\mu_{1} \mapsto \mu_{1}+1$ in (\ref{eq:i-1}), and rearranging the direct sums over $t$, we see that the direct sum of  (\ref{eq:i+1}) and  (\ref{eq:i-1}) is equivalent to the combination of the formula (\ref{eq:H2}) for  $i+1$ and the formula (\ref{eq:H0}) for $i-1$. 
        In other words, we have an isomorphism of $G$-modules
        \[
        \rH^{k+2i}(\cV_{k,i}) \cong \rH^{k+2(i-1)+1}(\cV_{k,i-1}) \oplus \rH^{k+2(i+1)-1}(\cV_{k,i+1}).
        \]
        This `change of variable' is then brought by $\rH^{\bullet}(d)$ via the cup product with the class
        \[d\in H^{1}_{\Gr} \left(\det(V_{2}^{\vee})^{-1} \otimes \extp^{2}(\bC^{N}/V_{2})^{\vee} \otimes \Sym^{1}(V_{2}^{\vee} \otimes  \bC^{N}/V_{2}) \right),\]
        as specified in Lemma~\ref{lem:vanext} and Corollary~\ref{lem:diff}. 
    \end{proof}

    The same argument applies to show the vanishing 
    \[
    \Ext^{>0} (\bT(\cE_{k}),\bT(\cE_{\la}))=0
    \]
    by using Theorem~\ref{thm:flop}, because the groups 
    \[
    \Ext^{\bullet}_{T^{*}\Gr} (\cV_{k,i}^{\prime},\bS^{\la}V_{2}^{\prime}) \cong \bigoplus_{\mu} H^{\bullet}_{\Gr} \left( \bS^{\la} V_{2}^{\prime} \otimes (\cV_{k,i}^{\prime})^{\vee} \otimes \bS^{\mu}V_{2}^{\prime} \otimes \bS^{\mu} V_{N-2}^{\vee} \right)
    \]
    can be computed on $\Gr(N-2,N)$ via exactly the same concatenation weights as in (\ref{eq:conwtlk}).
    The only difference is that the resulting module from the Borel--Weil--Bott theorem is now a Schur functor of $\bC^{N}$ rather than of its dual $(\bC^{N})^{\vee}$, since we would change the order in which the concatenation is formed; see Remark~\ref{rmk:BWB}.    
    \end{proof}

    \begin{lem}\label{lem:van3}
    	We have $\Ext^{>0}(\cE_{k},\cE_{k^{\prime}} ) =0$ for any $k, k^{\prime}$. 
    \end{lem}
    
    \begin{proof}
    Assume $k> k^{\prime}$, and consider
    \[
    \Ext^{\bullet}_{T^{*}\Gr} (\cV_{k,i},\cV_{k^{\prime},j}) \cong \bigoplus_{\mu} H^{\bullet}_{\Gr} \left(\cV_{k,i}^{\vee}\otimes \cV_{k^{\prime},j} \otimes \bS^{\mu}V_{2}^{\vee} \otimes \bS^{\mu} \bC^{N}/V_{2}\right)
    \]
    and the corresponding concatenation weights
    \begin{equation}\label{eq:conwtkkp}
        \left(\mu_{1} + k+i-k^{\prime}-j, \mu_{2} + k+i-k^{\prime}-j, ((1)^{N-3-k-2i}) \otimes ((-1)^{N-3-k^{\prime}-2j}) \otimes -\mu \right). 
    \end{equation}
    When $d := k+i -k^{\prime}-j \geq 0$, the weight (\ref{eq:conwtkkp}) is either already dominant or never in a dominant orbit. 
    On the other hand, if $d<0$, we may permute the coordinate $\mu_{2}+d$ rightward to obtain a nonzero $H^{>0}$.
    However, in this case, as 
    \begin{align*}
        N-3-k-2i - (N-3-k^{\prime}-2j) = -2d + k- k^{\prime} \geq -d +2,
    \end{align*}
    there is at least one coordinate $1$ remaining in (\ref{eq:conwtkkp}) when $\mu_{2} + d$ is permuted by consecutive transpositions into $0$.  
    Hence, the weight (\ref{eq:conwtkkp}) is not in a dominant orbit and there are no higher extensions of $\cV_{k,i}$ by $\cV_{k^{\prime},j}$ for all $i,j$.  
    Consequently, 
    \[\Ext^{>0}(\cE_{k}, \cE_{k^{\prime}}) = 0. \]
    
    To see $\Ext^{>0}(\cE_{k^{\prime}},\cE_{k}) = 0$, we apply the equivalence $\bT$ and prove 
    \[\Ext^{>0}(\bT(\cE_{k^{\prime}}), \bT(\cE_{k})) = 0 \] 
    on $\TGr{N-2}{N}$.
    By Theorem~\ref{thm:flop}, this amounts to compute     
    \[
    \Ext^{\bullet}_{T^{*}\Gr} (\cV_{k^{\prime}, j}^{\prime}, \cV_{k,i}^{\prime}) \cong \bigoplus_{\mu} H^{\bullet}_{\Gr} \left( (\cV_{k^{\prime}, j}^{\prime} )^{\vee} \otimes  \cV_{k,i}^{\prime} \otimes \bS^{\mu} V_{2}^{\prime}  \otimes \bS^{\mu} V_{N-2}^{\vee}\right).
    \]
    The previous argument applies without any change since over $\Gr(N-2,N)$, these groups are calculated by  the same concatenation weights as in (\ref{eq:conwtkkp}); see Remark~\ref{rmk:BWB}.

    It remains to deal with the self-extensions of $\cE_{k}$. 
    In the proof of Lemma~\ref{lem:vanext}, we have shown that the only nontrivial $\Ext^{>0}$ between the bundles $\{\cV_{k,i}\}_{i}$ is the $\Ext^{1}$ of each $\cV_{k,i}$ by $\cV_{k,i+1}$.  
    Moreover, we have chosen the differential $d: \cV_{k,i} [-1] \to \cV_{k,i+1}$ to be the unique $G$-equivariant one. 
    The convolution $\cE_{k}$ is formed as the iterated cones 
    \[
    \cC_{\alpha} := \on{Conv}\left(\cV_{k,n_{k}^{+}-\alpha}[-\alpha] \to \cdots \to \cV_{k,n_{k}^{+}}\right), \quad \alpha = 1,\cdots, n_{k}^{+} + n_{k}^{-}. 
    \]
    We will use induction to show that every $\Ext^{>0}(\cC_{\alpha},\cC_{\alpha}) = 0$.  
    By definition, there is a distinguished triangle
    \[
    \cV_{k,n_{k}^{+}-\alpha} [-1] \xlongrightarrow{d} \cC_{\alpha -1} \longrightarrow \cC_{\alpha},
    \]
    where $d$ is essentially the differential $\cV_{k,n_{k}^{+}-\alpha} [-1] \to \cV_{k, n_{k}^{+}-\alpha +1}$. 
    Now, the functor $\Ext^{>0}(\cC_{\alpha-1},-)$ vanishes on $\cC_{\alpha-1}$ by induction, and it also vanishes on  $\cV_{k,n_{k}^{+}-\alpha}$ by using the fact that $\Ext^{>0} (\cV_{k,i},\cV_{k,j}) =0$ for $i>j$. 
    So we deduce $\Ext^{>0} (\cC_{\alpha-1}, \cC_{\alpha}) = 0$ from the corresponding long exact sequence. 
    On the other hand, we have   
    \[
    \Ext^{\geq 2} (\cV_{k,n_{k}^{+}-\alpha}, \cC_{\alpha-1}) = 0 , \quad \Ext^{\geq 1} (\cV_{k,n_{k}^{+}-\alpha}, \cV_{k,n_{k}^{+}-\alpha}) = 0,
    \]
    which immediately imply $\Ext^{\geq 2} (\cV_{k,n_{k}^{+}-\alpha}, \cC_{\alpha}) = 0$. 
    Thus it remains to verify that the connecting morphism 
    \[
    \cdots \longrightarrow \End (\cV_{k,n_{k}^{+}-\alpha}) \xlongrightarrow{\delta} \Ext^{1} (\cV_{k,n_{k}^{+}-\alpha}, \cC_{\alpha-1} ) \longrightarrow \cdots 
    \]
    is surjective.
    As it is induced by the functor $\Hom(\cV_{k,n_{k}^{+}-\alpha},- )$, the map $\delta$ assigns to $\on{id}$  the differential $d$ and realizes $\Ext^{1}(\cV_{k,n_{k}^{+}-\alpha}, \cC_{\alpha-1})$ as a module over the algebra $\End(\cV_{k,n_{k}^{+}-\alpha})$. 
    This module is in fact isomorphic to $\Ext^{1}(\cV_{k,n_{k}^{+}-\alpha}, \cV_{k,n_{k}^{+}-\alpha+1})$, which is computed explicitly in the proof of Lemma~\ref{lem:vanext} as  
    \[
    \bigoplus_{n} H^{1}_{\Gr} \left(\det(V_{2}^{\vee})^{-1} \otimes \extp^{2}(\bC^{N}/V_{2})^{\vee} \otimes \Sym^{n}V_{2}^{\vee} \otimes \Sym^{n} \bC^{N}/V_{2}\right). 
    \]
    Here the factors $\Sym^{n}V_{2}^{\vee}\otimes \Sym^{n}\bC^{N}/V_{2}$ come from the tensor algebra structure of 
    \[
    \bigoplus_{n} H^{0}_{\Gr} \left(\Sym^{n}V_{2}^{\vee}\otimes \Sym^{n}\bC^{N}/V_{2}\right) \subset \End(\cV_{k,n_{k}^{+}-\alpha}),
    \]
    so $\Ext^{1}(\cV_{k,n_{k}^{+}-\alpha}, \cC_{\alpha-1})$ is generated by $d$ as an $\End(\cV_{k,n_{k}^{+}-\alpha})$-module. 
\end{proof}

\begin{proof}[Proof of Theorem~\ref{thm:tilting}]
	Combining Corollary~\ref{cor:gen}, Lemma~\ref{lem:van1}, Lemma~\ref{lem:van2}, and Lemma~\ref{lem:van3}, we prove that the $\cE$ is a tilting bundle. 
\end{proof}

\subsection{Koszulity}\label{sec:Koszul}
We now study the noncommutative algebra $A = \End(\cE)$. 
Let $\Db(A)$ be the bounded derived category of finitely generated right modules over $A$.
Recall we have introduced a grading on $A$ that is induced by a natural $\Cx$-action on the cotangent bundle of the Grassmannian in \S\ref{sec:grading}. 

\begin{defn}[\cite{BGS}]
    A graded ring $R = \bigoplus_{j\geq 0} R_{j}$ is \textit{Koszul} if $R_{0}$ is semisimple and for any two irreducible graded $R$-modules $L_{1}$, $L_{2}$ concentrated in degree zero, we have $\Ext^{i}_{R}(L_{1},L_{2})_{j} =0$ for $i\neq j$. 
\end{defn}

\begin{thm}\label{thm:Koszul}
    The algebra $A$ is non-negatively graded and its degree zero part consists of scalar multiples of the identityentity
    \[
    \Ext^{0}(\cE_{a},\cE_{b})_{0} \cong  \delta_{a,b} \bC, \quad \forall a,b. 
    \]
\end{thm}

\begin{proof}
	We will calculate the degrees case by case in Lemma~\ref{lem:K1}, Lemma~\ref{lem:K2}, and Lemma~\ref{lem:K3}. 
\end{proof}

\begin{lem}\label{lem:K1}
	For any $\la, \la^{\prime}$, we have 
	\[
	\Ext^{0}(\cE_{\la},\cE_{\la^{\prime}}) = \Ext^{0}(\cE_{\la},\cE_{\la^{\prime}})_{\geq 0},\quad  \Ext^{0}(\cE_{\la},\cE_{\la^{\prime}})_{0} \cong \delta_{\la,\la^{\prime}}\bC. 
	\]
\end{lem}

\begin{proof}
	We first compute the degrees of 
	\[
	\Ext^{0}_{T^{*}\Gr} (\cE_{\la},\cE_{\la^{\prime}}) \cong \bigoplus_{\mu} H^{0}_{\Gr} \left( \bS^{\la}V_{2}^{\vee}\otimes \bS^{\la^{\prime}}V_{2} \otimes \bS^{\mu}V_{2}^{\vee} \otimes \bS^{\mu} \bC^{N}/V_{2}\right),
	\]
	where the direct sum is over all Young diagrams $\mu = (\mu_{1},\mu_{2})$. 
	By the Littlewood--Richardson rule, we have a decomposition of the tensor product 
	\[
	\bS^{\la}V_{2}^{\vee}\otimes \bS^{\la^{\prime}}V_{2} \cong \bigoplus_{t} \bS^{\la_1 - \la_{2}^{\prime} -t, \la_{2} - \la_{1}^{\prime} +t} V_{2}^{\vee},
	\]
	where $0\leq t\leq \min\{\la_1 -\la_2 , \la_{1}^{\prime} - \la_{2}^{\prime}\}$. 
	When $\la_{2}-\la_{1}^{\prime}+t\geq 0$, the cohomology group is nonvanishing, and it has degree 
	\[
	(\la_1 - \la_{2}^{\prime} -t) + (\la_{2} - \la_{1}^{\prime} +t) + 2|\mu| \geq 0. 
	\]
	The degree equals zero only if $\mu=0$, that is to say, the cohomology is taken over the underlying Grassmannian  $H^{0}_{\Gr}(\bS^{\la}V_{2}^{\vee}\otimes \bS^{\la^{\prime}}V_{2})$. 
	The $\Gr(2,N)$ is a GIT quotient of the vector space $\Hom(V_2, \bC^N)$, with semistable locus $\{a: V_{2}\to \bC^{N} \mid \on{rk}(a) =2\}$.
	Its complement in the linear stack $[\Hom(V_{2},\bC^{N})/\GL(2)]$ has codimension at least two, and the local cohomology $H^{0}, H^{1}$ relative to that complement vanishes \cite[\S 2.5]{Cautis}.
	Thus, from the long exact sequence of relative cohomology, the group $H^{0}$ is identical either computed over the Grassmannian or over the linear stack.
	In the latter case, it is equivalent to the invariants 
	\[
	\bigoplus_{n\geq 0} ( \bS^{\la}V_{2}^{\vee}\otimes \bS^{\la^{\prime}}V_{2} \otimes  \Sym^{n} (V_{2} \otimes \bC^{\vee}) )^{\GL(2)}. 
	\]
	The degree $n=0$ piece is one-dimensional when $\la = \la^{\prime}$ and zero otherwise; see \cite[Lemma A.13]{DS}.
	
	On the other hand, when $\la_{2}-\la_{1}^{\prime}+t < 0$, the cohomology group is nonvanishing only if $|\mu| \geq -(\la_{2}-\la_{1}^{\prime}+t)$.
	The corresponding degree is
	\[
	(\la_1 - \la_{2}^{\prime} -t) + (\la_{2} - \la_{1}^{\prime} +t) + 2|\mu| \geq \la_1 -\la_{2} + \la_{1}^{\prime} - \la_{2}^{\prime} -2t \geq 0. 
	\]
	When $\la_1 - \la_{2}^{\prime} -t < 0$, the first inequality has to be strict so that the cohomology is nonvanishing. 
	When $\la_1 - \la_{2}^{\prime} -t \geq 0$, the second inequality must be strict for the following reason: the expression $\la_1 -\la_{2} + \la_{1}^{\prime} - \la_{2}^{\prime} -2t$ is zero iff $\la_1 -\la_{2} = t = \la_{1}^{\prime} - \la_{2}^{\prime}$, which yields a contradiction between $\la_{2}-\la_{1}^{\prime}+t < 0$ and $\la_1 - \la_{2}^{\prime} -t \geq 0$.
\end{proof}

\begin{lem}\label{lem:K2}
	For any $\la, k$, we have 
	\[
	\Ext^{0}(\cE_{\la},\cE_{k}) = \Ext^{0}(\cE_{\la},\cE_{k})_{> 0}, \quad \Ext^{0}(\cE_{k},\cE_{\la}) = \Ext^{0}(\cE_{k},\cE_{\la})_{> 0}.
	\]
\end{lem}

\begin{proof}
	We first compute the degrees of each $\Ext^{0}(\cE_{\la},\cV_{k,i}) = H^{0}_{T^{*}\Gr}(\bS^{\la}V_{2}^{\vee} \otimes \cV_{k,i})$, which amounts to consider global sections $H^{0}_{\Gr}(-)$ of 
	\[
	F_{\mu} := \bS^{\la}V_{2}^{\vee} \otimes \cV_{k,i} \otimes \bS^{\mu}V_{2}^{\vee} \otimes \bS^{\mu} \bC^{N}/V_{2}
	\]
	for all Young diagrams $\mu = (\mu_{1},\mu_{2})$.
	The tensor product $\bS^{\la}V_{2}^{\vee} \otimes \det(V_{2})^{k+i}\otimes \bS^{\mu}V_{2}^{\vee}$ has a decomposition into Schur functors of $V_{2}$ with highest weights $(\la_{1}-k-i+\mu_{1}-t, \la_{2}-k-i+\mu_{2}+t)$ for $0\leq t\leq \on{min}\{\la_{1}-\la_{2}, \mu_{1}- \mu_{2}\}$.
	By the Borel--Weil--Bott theorem, the group $H^{0}_{\Gr}(F_{\mu})$ is computed through the concatenation weights
	\begin{equation}\label{eq:Koszul-Borel--Weil--Bott}
		(\la_{1}-k-i+\mu_{1}-t,\la_{2}-k-i+\mu_{2}+t,\underbrace{0,\cdots,0}_{k+2i+1},\underbrace{-1,\cdots,-1}_{N-3-k-2i})
	\end{equation}
	for all $t$.
	\begin{enumerate}[wide, labelindent=0pt, label = (\roman*)]
		\item When $k+2i+1 \geq 2$, we claim that $H^{0}_{\Gr}(F_{\mu})$ always has positive degrees.
		The degree of $H^{0}_{\Gr}(F_{\mu})$ (if nonvanishing) is given by
		\begin{equation}\label{eq:wt}
			|\la|-2(k+i)+1 + 2|\mu|
		\end{equation}
		by our choice (\ref{eq:shift}) of the internal degree shift of $\cV_{k,i}$. 
		If $\la_{2}-k-i \geq 0$, $H^{0}_{\Gr}(F_{0})$ is already nonvanishing, and its degree is obviously positive.
		Now assume $\la_{2}-k-i<0$ and (\ref{eq:Koszul-Borel--Weil--Bott}) is dominant. 
		\begin{enumerate}[label = (\alph*)]
			\item If $k+2i+1>2$, or $k+2i+1=2$ and $\la_{2}-k-i = -1$, the least $|\mu|$ occurs when $\la_{2}-k-i+\mu_{2}+t = 0$. 
			\item If $k+2i+1=2$ and $\la_{2}-k-i = -2$, the least\footnote{i.e.\ $\mu=(1,1)$ or $\mu=(2,0)$ (when $\la_1 -\la_2 \geq 2$).} $|\mu| =2$.
			\item If $k+2i+1=2$ and $\la_{2}-k-i < -2$, the least $|\mu|$ occurs when $\la_{2}-k-i+\mu_{2}+t = -1$ and $\mu_{2} >0$.
		\end{enumerate}
		In all of these cases, it is easy to check that the degree (\ref{eq:wt}) of $H^{0}_{\Gr}(F_{\mu})$ is positive. 
		
		\item When $k+2i+1=1$, i.e.\ $i=-n_{k}^{-}$ and $N+k$ even. 
		The degree (\ref{eq:wt}) is non-negative in general, and the degree zero part $H^{0}_{\Gr}(F_{\mu})_{0} \cong \det(\bC^{N})$ occurs when $\la_{1}-k-i=-1$, $\la_{2}-k-i =-2$ and $\mu = (1,0)$.  
		Now, consider the long exact sequence 
		\[
		\cdots \longrightarrow\Ext^{0}(\cE_{\la}, \cE_{k}) 
		\longrightarrow \Ext^{0}(\cE_{\la}, \cV_{k,-n_{k}^{-}}) \longrightarrow 
		\Ext^{1}(\cE_{\la}, \cC) \longrightarrow 0, 
		\]
		where $\cC = \cC_{n_{k}^{+}+n_{k}^{-}-1}$ is the second last iterated cones in the definition of $\cE_{k}$, taken before the last $\cV_{k,-n_{k}^{-}}$. 
		By our previous calculation (\ref{eq:nonvaext}), we know that $\Ext^{1}(\cE_{\la}, \cC) \cong \Ext^{1}(\cE_{\la}, \cV_{k,1-n_{k}^{-}})$ has non-negative degrees, with its degree zero part also isomorphic to $\det(\bC^{N})$ at $\mu=(1,1)$. 
		Thus, we conclude that the $\tG$-equivariant connecting morphism $\rH^{\bullet}(d)$ induces an isomorphism between the irreducible modules (cf.\ Claim~\ref{clm:conn})
		\begin{equation*}
			\Ext^{0}(\cE_{\la}, \cV_{k,-n_{k}^{-}})_{0} \simrightarrow  \Ext^{1}(\cE_{\la}, \cC)_{0},
		\end{equation*}
		and consequently, the degrees of $\Ext^{0}(\cE_{\la}, \cE_{k})$ are all positive. 
		
		\item When $k+2i+1 =0$, i.e.\ $i=-n_{k}^{-}$ and $N+k$ odd.
		There are two situations in which the degree (\ref{eq:wt}) is non-positive. 
		\begin{enumerate}[label = (\alph*)]
			\item If $\la_{1}-k-i =-1$, $\la_{2}-k-i=-2$ and $\mu=(1,0)$, then  $H^{0}_{\Gr}(F_{\mu})_{0} \cong \det(\bC^{N})\otimes \bC^{N}$.
			\item If  $\la_{1}-k-i =-1$ (resp.\ $0$), $\la_{2}-k-i=-1$ and $\mu=(0,0)$, then $H^{0}_{\Gr}(F_{\mu})_{-1} \cong \det(\bC^{N})$ (resp.\ $H^{0}_{\Gr}(F_{\mu})_{0} \cong  \extp^{N-1} (\bC^{N})$). 
		\end{enumerate}
		Using the same argument, we can show that these irreducible graded modules are isomorphic to $\Ext^{1}(\cE_{\la}, \cC_{n_{k}^{+}+n_{k}^{-}-1})_{0}$ via $\rH^{\bullet}(d)$.
		So, the group $\Ext^{0}(\cE_{\la}, \cE_{k})$ is positively graded. 
	\end{enumerate}
		
	To compute the degrees of $\Ext^{0}(\cE_{k},\cE_{\la})$, we apply the $\tG$-equivariant equivalence $\bT$ using Theorem~\ref{thm:flop}. 
	It then amounts to compute the degrees of the global sections of 
	\[
	F_{\mu}^{\prime} = (\cV_{k,i}^{\prime})^{\vee} \otimes \bS^{\la}V_{2}^{\prime}\langle 2(N-4-\la_{1}-\la_{2})\rangle \otimes \bS^{\mu}V_{N-2}^{\vee} \otimes \bS^{\mu}V_{2}^{\prime}
	\]
	on $\Gr(N-2,N)$.
	Here, $\cV_{k,i}^{\prime}$ is the bundle $\det(V_{2}^{\prime})^{k+i} \otimes \extp^{N-3-k-2i}  V_{N-2}$ shifted by $ \langle N-4-3k-2i \rangle$ and  $\mu=(\mu_{1},\mu_{2})$ is any Young diagram. 
	By Remark~\ref{rmk:BWB}, the group $H^{0}_{\Gr}(F_{\mu}^{\prime})$ is computed through the same concatenation weights as in (\ref{eq:Koszul-Borel--Weil--Bott}), and as we noted before, the only difference is that the resulting module from the Borel--Weil--Bott theorem is now a Schur functor of $\bC^{N}$ rather than of its dual. 
	Taking into account the internal degree shifts, we find that the degree of $H^{0}_{\Gr}(F_{\mu}^{\prime})$ (if nonvanishing) is also given by $|\la|-2(k+i)+1+2|\mu|$. 
	Now, all previous arguments repeat. 
\end{proof}

\begin{lem}\label{lem:K3}
	For any $k, k^{\prime}$, we have 
	\[
	\Ext^{0}(\cE_{k},\cE_{k^{\prime}}) = \Ext^{0}(\cE_{k},\cE_{k^{\prime}})_{\geq 0},\quad  \Ext^{0}(\cE_{k},\cE_{k^{\prime}})_{0} \cong \delta_{k,k^{\prime}}\bC. 
	\]
\end{lem}

\begin{proof}
    We first consider $\Ext^{0}(\cE_{k},\cE_{k^{\prime}})$ with $k>k^{\prime}$. 
    The cohomology $H^{0}_{\Gr}$ of 
    \[
    \cV_{k,i}^{\vee} \otimes \cV_{k^{\prime},j} \otimes \bS^{\mu}V_{2}^{\vee} \otimes \bS^{\mu}\bC^{N}/V_{2}
    \]
    is computed from the concatenation weight as in (\ref{eq:conwtkkp}). 
    It has degree 
    \begin{equation}\label{eq:Koszul-Borel--Weil--Bott2}
    	2(k-k^{\prime} +i-j) + 2|\mu|
    \end{equation} 
    if the cohomology is nonvanishing. 
    The case of $k-k^{\prime} +i-j >0$ is obvious. 
    Otherwise, the concatenation (\ref{eq:conwtkkp}) is dominant only if $|\mu|\geq -2(k-k^{\prime} +i-j) +2 \delta_{k-k^{\prime} +i-j,0}$. 
    A simple calculation shows that the degree (\ref{eq:Koszul-Borel--Weil--Bott2}) is also positive in this case. 
    Thus, each group $\Ext^{0}(\cV_{k,i},\cV_{k^{\prime},j})$ is positively graded. 

    Again, to compute the degrees of $\Ext^{0}(\cE_{k^{\prime}},\cE_{k})$, we apply the equivalence $\bT$ by using Theorem~\ref{thm:flop}.
    The group $H^{0}(\cV_{k^{\prime},j}^{\prime},\cV_{k,i}^{\prime})$ then can be computed from the same concatenation weights as in (\ref{eq:conwtkkp}), and the same argument applies. 

    It remains to show that each $\End(\cE_{k})$ is non-negatively graded and $\End(\cE_{k})_{0} \cong \bC$. 
    The convolution $\cE_{k}$ is the iterated cones 
    \[
    \cC_{\alpha} = \on{Conv}\left(\cV_{k,n_{k}^{+}-\alpha}[-\alpha] \to \cdots \to \cV_{k,n_{k}^{+}}\right), \quad \alpha = 1,\cdots, n_{k}^{+} + n_{k}^{-}. 
    \]
    We will use induction to prove the statement for each $\End(\cC_{\alpha})$.  
    First of all, it is not hard to see that $\Ext^{0}(\cV_{k,i},\cV_{k,j})$ is positively graded, except when $i=j$, where we have $\End(\cV_{k,i})_{0}\cong \bC$. 
    Next, consider the long exact sequence
    \[
    0\longrightarrow \Ext^{0}(\cV_{k,n_{k}^{+}-\alpha}, \cC_{\alpha}) \longrightarrow \End(\cC_{\alpha}) \longrightarrow \Ext^{0}(\cC_{\alpha-1},\cC_{\alpha}) \longrightarrow 0. 
    \]
    Here we used the fact that $\Ext^{1}(\cV_{k,n_{k}^{+}-\alpha},\cC_{\alpha}) = 0$ from the proof of Lemma~\ref{lem:van3}, which is deduced from the surjectivity of the connecting morphism 
    \[
    \delta: \End(\cV_{k,n_{k}^{+}-\alpha}) \longrightarrow \Ext^{1}(\cV_{k,n_{k}^{+}-\alpha}, \cC_{\alpha-1}).
    \]
    It turns out that $\delta$ induces an isomorphism $\bC\simrightarrow \bC$ between the degree zero parts, so $\Ext^{0}(\cV_{k,n_{k}^{+}-\alpha}, \cC_{\alpha})$ is concentrated in positive degrees. 
    On the other hand, we have known that $\Ext^{0}(\cC_{\alpha-1},\cC_{\alpha})$ is positively graded, and by induction, $\End(\cC_{\alpha-1})$ is non-negatively graded with $\End(\cC_{\alpha-1})_{0} \cong \bC$. 
    As a result, we see that $\Ext^{0}(\cC_{\alpha-1},\cC_{\alpha})$, hence also $\End(\cC_{\alpha})$, is concentrated in non-negative degrees, and its degree zero part is one-dimensional. 
\end{proof}

\begin{cor}
	The algebra $A$ is Koszul. 
\end{cor}

\begin{proof}
	Assume $L_{1}$, $L_{2}$ are two irreducible graded $A$-modules concentrated in degree zero.
	Since $A_{0} \cong \bigoplus_{\la,k} \bC $ is semisimple, we have $\Ext^{i}_{A}(L_{1},L_{2})_{j} = 0$ for $i>j$ immediately by \cite[Lemma 2.1.2]{BGS}. 
	Following Kaledin's argument from \cite[\S 5.5]{BM}, we conclude the other half of vanishing $\Ext^{i}_{A}(L_{1},L_{2})_{j} = 0$, $i<j$ by using the Serre duality (induced via the equivalence $\Db(\TGr{2}{N}) \simeq \Db(A)$)
	\[
	\Ext^{i}_{A}(L_{1},L_{2})_{j} \cong \Ext^{d-i}_{A}(L_{2},L_{1})_{d-j}^{\vee}.
	\]
	Here $d=\dim \TGr{2}{N}$ and we used the fact that $\TGr{2}{N}$ has trivial canonical bundle $\cO\langle -d\rangle$ (see, e.g., \cite[Lemma 3.4]{CK1}). 
\end{proof}

\section{Invariance}

\subsection{Flop equivalence}\label{sec:flop}
We begin this section by studying the behaviour of the tilting bundle $\cE$ under the derived equivalence $\bT$ (see \S\ref{sec:sl2}) for the stratified Mukai flop $\TGr{2}{N} \dashrightarrow \TGr{N-2}{N}$. 
As before, the quotient bundle $\bC^{N}/V_{N-2}$ on $\TGr{N-2}{N}$ is denoted by $V_{2}^{\prime}$ . 

\begin{thm}\label{thm:flop}
	We have $\bT (\cE_{\la}) = \bS^{\la}V_{2}^{\prime} \langle 2(N-4-\la_{1}-\la_{2}) \rangle$, and $\bT (\cE_{k})$ is given by the convolution of a complex 
	\begin{equation*}
		\cV_{k,n^{+}_{k}}^{\prime}[-n^{-}_{k}-n^{+}_{k}] \xlongrightarrow{d} \cdots \xlongrightarrow{d}  \cV_{k,i}^{\prime}[-n^{-}_{k}-i] \xlongrightarrow{d} \cdots \xlongrightarrow{d} \cV_{k,-n^{-}_{k}}^{\prime}[0], 
	\end{equation*}
	where $\cV_{k,i}^{\prime} = \det(V_{2}^{\prime})^{k+i} \otimes \extp^{N-3-k-2i}  V_{N-2} \langle N-4-3k-2i \rangle$. 
\end{thm}

Like $\cE_{k}$, the convolution $\bT(\cE_{k})$ is uniquely determined by its $\tG$-equivariant structure, which can be read from the internal degree shift on each $\cV_{k,i}^{\prime}$.
We shall prove this theorem in the rest of this subsection by spelling out the Rickard complex on these bundles.
The proof is based on various Lascoux resolutions of the images of the functors $\be$ and $\bbf$, which we place at the end (\S\ref{sec:cal}).

\begin{lem}\label{lem:counit}
	The counit $\varepsilon: \bbf^{(1)}\be^{(1)}[3-N]\langle N-3 \rangle \to \on{id}$ on $\cE_{\la}$ (with $\la_{2}=-1$) is given by an isomorphism $\cE_{\la} \simrightarrow \cE_{\la}$.
\end{lem} 

\begin{proof}
	Assume $\la_{2}=-1$. 
	By Lemma~\ref{lem:eimage}, we have $\be^{(1)}(\cE_{\la}) = \bS^{\la_{1}+2}V_{1} \otimes \det(\bC^{N})^{\vee} \langle 1 \rangle$. 
	By Lemma~\ref{lem:fimage} further, we know that $\bbf^{(1)}\be^{(1)} (\cE_{\la}) [3-N]\langle N-3 \rangle$ has a resolution 
	\[\cF^{-n} = \begin{cases}
		\bS^{-1-n,\la_{1}+1} V_{2} \otimes \extp^{1-n} (\bC^{N}/V_{2})^{\vee}\langle 2-2n\rangle, & 3-N\leq n \leq -\la_{1}-2,\\
		\bS^{\la_{1},-1-n} V_{2} \otimes \extp^{-n} (\bC^{N}/V_{2})^{\vee} \langle -2n \rangle, & -\la_{1}-1 \leq n \leq 0. 
	\end{cases}\]
	By a general result \cite[Lemma~1.6]{Kap2}, the morphism $\varepsilon: \cF^{\bullet} \to \cE_{\la}$ will be given by a map of chain complexes once we check that
	$\Ext^{>0}(\cF^{-n},\cE_{\la}) = 0$ for all $n$. 
	This is computed by
	\[
	\Ext^{\bullet}_{T^{*}\Gr} (\cF^{-n}, \cE_{\la}) \cong \bigoplus_{\mu} H^{\bullet}_{\Gr} \left( (\cF^{-n})^{\vee} \otimes \cE_{\la}\otimes \bS^{\mu} V_{2}^{\vee} \otimes \bS^{\mu} \bC^{N}/V_{2}\right)
	\]
	over all Young diagrams $\mu=(\mu_{1},\mu_{2})$. 
	We first consider the case $-\la_{1}-1\leq n \leq 0$. 
	By the Littlewood--Richardson rule, we have a decomposition 
	\[
	\bS^{\la_{1},-1-n}V_{2}^{\vee} \otimes \bS^{1,-\la_{1}}V_{2}^{\vee} \otimes \bS^{\mu}V_{2}^{\vee} \cong \bigoplus_{\nu} c_{\nu} \bS^{\nu}V_{2}^{\vee},
	\]
	where each $\nu$ satisfies $\nu_{2}\geq -1-n-\la_{1} +\mu_{2} \geq 3-n-N+\mu_{2}$.
	To apply the Borel--Weil--Bott theorem, we consider concatenation weights of the form
	\begin{equation}\label{eq:conwtunit}
		(\nu, ((-1)^{-n})\otimes -\mu).  
	\end{equation}
	Here we use the notation $((-1)^{m})$ to denote the weight that has $- 1$ on its last $m$ coordinates and zero elsewhere, and we consider all possible weights appearing in the decomposition of the tensor product $\extp^{-n} (\bC^{N}/V_{2}) \otimes \bS^{\mu} \bC^{N}/V_{2}$ according to Pieri's formula.  
	When the last coordinate of $\nu$ is permuted rightward to become $-1$, there is still at least one digit $0$ after it in (\ref{eq:conwtunit}) since there are at least $N-3+n$ (if $\mu_{2}=0$) or $N-4+n$ (if $\mu_{2}\neq 0$) zeros in $((-1)^{-n}\otimes -\mu)$ while $\nu_{2}\geq 3-n-N+\mu_{2}$. 
	For this reason, the weight (\ref{eq:conwtunit}) can never be permuted into a dominant weight. 
	Similarly, for $3-N\leq n \leq -\la_{1}-2$, there is a decomposition 
	\[\bS^{-1-n,\la_{1}+1} V_{2}^{\vee} \otimes \bS^{1,-\la_{1}} V_{2}^{\vee} \otimes \bS^{\mu}V_{2}^{\vee}   \cong \bigoplus_{\nu} c_{\nu} \bS^{\nu} V_{2}^{\vee},
	\] 
	where each $\nu_{2}\geq 1+\mu_{2}>0$. 
	Thus, the weights $(\nu, ((-1)^{1-n}) \otimes -\mu)$ are always dominant, and there are no higher extensions of $\cF^{-n}$ by $\cE_{\la}$.  
	In conclusion, the morphism $\varepsilon: \cF^{\bullet} \to \cE_{\la}$ is given by a single map $\iota: \cF^{0} = \cE_{\la} \to \cE_{\la}$. 
	Moreover, the map is nonzero because otherwise the differential $d_{1}:\Theta_{1}[-1] \to \Theta_{0}$ which is built upon $\varepsilon$ will vanish on $\cE_{\la}$.
	In that case, the image of the indecomposable $\cE_{\la}$ under the equivalence $\bT$ will become decomposable, which is absurd. 
	We next show that this map is actually a scalar multiple of the identity.  
	
	Recall that $\pi_{2}:\fZ_{1,2}\to \TGr{2}{N}$ is the projection from the Hecke correspondence (\S\ref{sec:sl2}). 
	Using the adjunction $\pi_{2,*} \dashv \pi_{2}^{!}$, we see that $\iota$ actually comes from a map 
	\begin{equation}\label{eq:iotamap}
		\bS^{\la_{1}+1} V_{1} \longrightarrow \bS^{\la_{1}+1,0} V_{2} 
	\end{equation}
	over $\fZ_{1,2}$. 
	Here we used the formula
	\begin{align*}
		\omega_{\pi_{2}} = \det(V_{1})^{4-N}\otimes \det(V_{2})^{N-2} \otimes \det(\bC^{N})^{-1} [3-N]\langle 2N-4\rangle,
	\end{align*}
	which can be found in \cite[p.96]{CKL-sl2} or \cite[\S 4.2]{CKL}.  
	In fact, the map (\ref{eq:iotamap}) is the tautological one because when we resolved $\be^{(1)}(\cE_{\la})$ in the proof of Lemma~\ref{lem:eimage}, we calculated the cohomology over the associated graded pieces (\ref{eq:assocgr}) of $\cE_{\la}$ arising from the tautological sequence 
	$0 \to V_{1} \to V_{2} \to V_{2}/V_{1} \to 0 $.
	In that calculation, the only nonvanishing term comes from the piece $\bS^{\la_{1}}V_{1} \otimes \bS^{-1} V_{2}/V_{1}$ for $d=0$ in (\ref{eq:assocgr}). 
	
	Thus, the map (\ref{eq:iotamap}) is constant over the tautological direction $V_{2}\hookrightarrow \bC^{N}$, and so is $\iota$. 
	Following \cite[Lemma A.13]{DS}, we consider the subspace of maps in $\Ext^{0}_{\fZ_{1,2}}(\cE_{\la},\cE_{\la})$ that are constant over $\Gr(V_{2},\bC^{N})$, and find that it is isomorphic to the invariants 
	\begin{equation*}
		\left( \cE_{\la}^{\vee}\otimes \cE_{\la} \otimes \Sym \left( V_{1}\otimes V_{2}^{\vee} \oplus (\bC^{N}/V_{2})\otimes V_{1}^{\vee} \right) \right)^{\GL(V_{1})\times \GL(V_{2})} \cong \bC. 
	\end{equation*}
	The nonzero map $\iota$ then must be an isomorphism. 
\end{proof}

\begin{lem}\label{lem:la}
	For any $\la$, we have $\bT (\cE_{\la}) = \bS^{\la}V_{2}^{\prime} \langle 2(N-4-\la_{1}-\la_{2}) \rangle$. 
\end{lem}

\begin{proof}
	When $\la_{2}\neq -1$, we have $\be^{(1)} (\cE_{\la}) =0$ by Lemma~\ref{lem:eimage}.
	The Rickard complex then degenerates to only one term $\Theta_{0}$, which is $\bbf^{(N-4)}(\cE_{\la}) = \bS^{\la}V_{2}^{\prime} \langle 2(N-4-\la_{1}-\la_{2})\rangle$ by Lemma~\ref{lem:fil}. 
	When $\la_{2}=-1$, we know from Lemma~\ref{lem:eimage} that $\be^{(2)}(\cE_{\la})=0$ and $\be^{(1)}(\cE_{\la}) = \bS^{\la_{1}+1}V_{1}\otimes \det(\bC^{N}/V_{1})^{-1} \langle 1\rangle$.
	So, the image of $\cE_{\la}$ under $\bT$ is the cone 
	\[
	\on{Cone}\left( \bbf^{(N-3)}\left(\bS^{\la_{1}+1}V_{1}\otimes \det(\bC^{N}/V_{1})^{-1}\langle 1\rangle \right)[-1]\langle 1\rangle \xlongrightarrow{d_{1}} \bbf^{(N-4)}(\cE_{\la})\right). 
	\]
	As $-1\leq \la_{1} \leq N-4$, we can apply Lemma~\ref{lem:fimage} to $\bS^{\la_{1}+2}V_{1}$, take the tensor product of the resolution of $\bbf^{(n-3)}(\bS^{\la_{1}+2}V_{1})$ thus obtained with $\det(\bC^{N})^{-1}$, and finally shift it by $\langle 2\rangle [-1]$. 
	In this way, we resolve the sheaf on the left-hand side in the above cone by
	\begin{equation*}
		\cF_{1}^{-n} = \begin{cases}
			\bS^{\la_{1},n} V_{2}^{\prime} \otimes \extp^{n+1} V_{N-2}^{\vee} \langle 2(N-4-\la_{1}-n) \rangle, & -1 \leq n \leq \la_{1},\\
			\bS^{n,\la_{1}+1}V_{2}^{\prime} \otimes \extp^{n+2} V_{N-2}^{\vee} \langle 2(N-5-\la_{1}-n) \rangle,  & \la_{1}+1\leq n \leq N-4. 
		\end{cases}
	\end{equation*}
	By Lemma~\ref{lem:fil}, the sheaf on the right-hand side in the cone has a resolution with exactly the same terms in non-positive degrees
	\begin{equation*}
		\cF_{0}^{-n} = \begin{cases}
			\bS^{\la_{1},n} V_{2}^{\prime} \otimes \extp^{n+1} V_{N-2}^{\vee} \langle 2(N-4-\la_{1}-n) \rangle, & 0 \leq n \leq \la_{1},\\
			\bS^{n,\la_{1}+1}V_{2}^{\prime} \otimes \extp^{n+2} V_{N-2}^{\vee} \langle 2(N-5-\la_{1}-n) \rangle,  & \la_{1}+1\leq n \leq N-4. 
		\end{cases}
	\end{equation*}
	We claim that the differential $d_{1}: \cF_{1}^{\bullet} \to \cF_{0}^{\bullet}$ is given by isomorphisms $\cF_{1}^{-n} \simrightarrow \cF_{0}^{-n}$ at each $n\geq 0$. 
	Recall that $d_{1}: \bbf^{(N-3)}\be^{(1)}[-1]\langle 1\rangle \to \bbf^{(N-4)}$ is defined by including $\bbf^{(N-3)}$ into the lowest degree factor $\bbf^{(N-3)}[N-4]\langle 4-N \rangle$ in the decomposition 
	\[\bbf^{(N-4)} \bbf^{(1)} \simeq \bbf^{(N-3)}\otimes H^{*}(\bP^{N-4}),
	\]
	followed by applying the counit $\varepsilon: \bbf^{(1)}\be^{(1)}[3-N]\langle N-3 \rangle\to \on{id}$. 
	The map $\varepsilon$ is a (nonzero) scalar multiple of the identity (on $\cE_{\la}$) by Lemma~\ref{lem:counit}, hence so is the morphism $\bbf^{(N-4)}(\varepsilon)$. 
	As $d_{1}$ is the inclusion into the lowest degrees, it is a scalar multiple of the identity on each $\cF_{1}^{-n}$ for $n\geq 0$. 
	
	Given this claim, the terms of $\cF_{1}^{\bullet}$ and $\cF_{0}^{\bullet}$ in non-positive degrees are cancelled in the cone, which leaves
	\[
	\on{Cone} \left(\cF_{1}^{\bullet}\xlongrightarrow{d_{1}} \cF_{0}^{\bullet} \right) = \cF_{1}^{1} = \bS^{\la_{1},-1}V_{2}^{\prime} \langle 2(N-4-\la_{1}-(-1)) \rangle.
	\]
\end{proof}

Next we calculate the image of $\cE_{k}$ under $\bT$. 
First note that except for $k=0,-1$ where there is no term $\cV_{k,-1}$ in the complex (\ref{eq:complex}), we have 
\[
\be^{(1)}(\cV_{k,0}) = \bS^{k}V_{1}[-k]\langle k+1-N \rangle = \be^{(1)} (\cV_{k,-1}[-1]) 
\]
by Lemma~\ref{lem:eimage}.
As $d:\cV_{k,-1}[-1]\to \cV_{k,0}$ is $\tG$-equivariant, the induced map $\be^{(1)}(d): \bS^{k}V_{1} \to \bS^{k}V_{1}$ is also $\tG$-equivariant hence a scalar multiple of the identity.  
Moreover, it is nonzero because otherwise the counit $\varepsilon$ will vanish on $d$, and the equivalence $\bT$ will map the nontrivial class $d$ into a trivial one, leading to a contradiction.
So, in the cone $\on{Cone}(\bT(d))$, the terms $\Theta_{1}(\cV_{k,-1}[-1])$ and $\Theta_{1}(\cV_{k,0})$ are cancelled. 
We also know from Lemma~\ref{lem:eimage} that $\be^{(1)}$ vanishes on all other $\cV_{k,i}$, $i\neq 0,-1$, hence $\bT(\cE_{k})$ is reduced to the convolution
\begin{align}
	& \on{Conv} \left(\Theta_{0}(\cV_{k,-n_{k}^{-}})[-n_{k}^{+} -n_{k}^{-}] \to \cdots \to \Theta_{0}(\cV_{k,n_{k}^{+}})\right), \quad k\neq 0,-1, \label{eq:itcone1} \\
	& \on{Conv} \left(\bT(\cV_{k,0})[-n_{k}^{+}] \to \Theta_{0}(\cV_{k,1})[-n_{k}^{+} +1] \to \cdots \to \Theta_{0}(\cV_{k,n_{k}^{+}})\right), \,\, k=0,-1. \label{eq:itcone2}
\end{align}
In order to calculate the image of $\Theta_{0} = \bbf^{(N-4)}$ on $\cV_{k,i}$, we use the filtration  (\ref{eq:filtration})
\begin{align*}
	0\longrightarrow F^{1}\cV_{k,i} &\longrightarrow \cV_{k,i} \longrightarrow F^{2/1}\cV_{k,i}\longrightarrow 0,\\
	0\longrightarrow F^{0}\cV_{k,i} &\longrightarrow F^{1}\cV_{k,i} \longrightarrow F^{1/0}\cV_{k,i}\longrightarrow 0
\end{align*}
on the correspondence $\fZ_{2,N-2}$.
The image $\bbf^{(n-4)}(\cV_{k,i})$ is thus realized as an iterated extension sheaf
\[
\on{Conv}\left( \pi_{N-2,*}(F^{2/1}\cV_{k,i})[-2] \longrightarrow \pi_{N-2,*}(F^{1/0}\cV_{k,i})[-1] \longrightarrow \pi_{N-2,*}(F^{0}\cV_{k,i})\right). 
\]

\begin{lem}
	On $\fZ_{2,N-2}$, the differential $d:\cV_{k,i}[-1] \to \cV_{k,i+1}$ is uniquely determined by two $\tG$-equivariant maps between the associated graded pieces
	\[
	\iota_{2/1}:F^{2/1}\cV_{k,i}[-2] \longrightarrow F^{1/0}\cV_{k,i+1}[-1], \quad 
	\iota_{1/0}:F^{1/0}\cV_{k,i}[-1] \longrightarrow F^{0}\cV_{k,i+1}. 
	\]
\end{lem}

\begin{proof}
	The differential $d\in\Ext^{1}(\cV_{k,i},\cV_{k,i+1})$ is the unique $\tG$-equivariant class taken over $\TGr{2}{N}$. 
	As $\pi_{2,*}\cO =\cO$ along the projection $\pi_{2}:\fZ_{2,N-2}\to \TGr{2}{N}$, the $\tG$-invariants are also one-dimensional over $\fZ_{2,N-2}$ by the projection formula. 
	In fact, the correspondence $\fZ_{2,N-2}$ is the total space of $\Hom(V_{2}^{\prime}, V_{2})$ over the partial flag variety $\mathrm{Fl}(2,N-2;N)$, and the extension group can be computed by applying the Borel--Weil--Bott Theorem (\cite[\S 4.1]{Wey}) to the cohomology
	\[
	H^{\bullet}_{\on{Fl}}\left( \cV_{k,i}^{\vee}\otimes \cV_{k,i+1} \otimes \Sym(V_{2}^{\prime}\otimes V_{2}^{\vee}) \right) .
	\]
	A direct calculation shows that the only $\tG$-equivariant extensions between the associated graded pieces of $\cV_{k,i}$ and $\cV_{k,i+1}$ are
	\begin{equation*}
		\Ext^{1}(F^{2/1}\cV_{k,i},F^{1/0}\cV_{k,i+1})^{\tG} \cong \bC, \quad \Ext^{1} (F^{1/0}\cV_{k,i},F^{0}\cV_{k,i+1})^{\tG} \cong \bC, 
	\end{equation*}
	and $\Ext^{2}(F^{2/1}\cV_{k,i},F^{0}\cV_{k,i+1})^{\tG} \cong \bC$.
	This implies $\Ext^{\bullet}(\cV_{k,i},F^{2/1}\cV_{k,i+1})^{\tG} = 0$, hence
	\[
	\bC  d  = \Ext^{1}(\cV_{k,i},\cV_{k,i+1})^{\tG} \cong \Ext^{1}(\cV_{k,i},F^{1}\cV_{k,i+1})^{\tG} .
	\]
	Moreover, from the long exact sequence of the filtration $0\to F^{0}\to F^{1} \to F^{1/0} \to 0$, we obtain
	\begin{align*}
		\Ext^{1}(F^{1}\cV_{k,i},F^{1}\cV_{k,i+1})^{\tG} \cong & \Ext^{1}(F^{1}\cV_{k,i},F^{0}\cV_{k,i+1})^{\tG}  =: \bC \iota_{1/0} \\
		\cong & \Ext^{1}(F^{1/0}\cV_{k,i},F^{0}\cV_{k,i+1})^{\tG}.
	\end{align*}
	In summary, there is a commutative diagram of triangles
	\begin{equation}\label{eq:fildia}
		\begin{tikzcd}
			F^{2/1}\cV_{k,i}[-2] \arrow[r] \arrow[d, "\iota_{2/1}"] & 
			F^{1}\cV_{k,i}[-1] \arrow[r] \arrow[d, "\iota_{1/0}"] &
			\cV_{k,i}[-1] \arrow[d, "d"] \\
			F^{1/0} \cV_{k,i+1}[-1] \arrow[r] & 
			F^{0} \cV_{k,i+1} \arrow[r] &
			F^{1}\cV_{k,i+1} ,
		\end{tikzcd}
	\end{equation}
	where $\iota_{2/1}$ is determined up to a scalar multiple. 
\end{proof}

Given (\ref{eq:fildia}), the cone of $\pi_{N-2,*}(d)$ can be realized as the totalization 
\begin{equation}\label{eq:total}
	\on{Conv}\left(  
	\pi_{N-2,*}(F^{2/1}\cV_{k,i})[-2] \longrightarrow
	\substack{\pi_{N-2,*}(F^{1/0}\cV_{k,i+1})[-1]\\ 
		\oplus \\
		\pi_{N-2,*}(F^{1/0}\cV_{k,i})[-1]} \longrightarrow 
	\pi_{N-2,*}(F^{0}\cV_{k,i+1})
	\right). 
\end{equation}
In Lemma~\ref{lem:fil}, we found locally free resolutions of these sheaves $\pi_{N-2,*}(F^{j/j-1}\cV_{k,i})$. 
We now use them to spell out this convolution. 

\begin{lem}\label{lem:neq0-1}
	For $k\neq 0,-1$, we have 
	\[
	\bT(\cE_{k}) = \on{Conv} \left(\cV_{k,n^{+}_{k}}^{\prime}[-n^{-}_{k}-n^{+}_{k}] \to \cdots \to  \cV_{k,i}^{\prime}[-n^{-}_{k}-i] \to \cdots \to \cV_{k,-n^{-}_{k}}^{\prime} \right). 
	\]
\end{lem}

\begin{proof}
	When $i+1\leq 0$, the sheaf $\pi_{N-2,*}(F^{1/0}\cV_{k,i}[-1])$ has a resolution $\cF^{\bullet}_{i,1}$ which is concentrated in positive degrees with terms
	\begin{equation*}
		\cF^{-n}_{i,1} = \left( \bS^{k+i,k+i+n} V_{2}^{\prime} \oplus \bS^{k+i-1,k+i+n+1} V_{2}^{\prime} \right) \otimes \extp^{N-3-k-2i-n} V_{N-2} \langle 4-N -3k-2i-2n \rangle,
	\end{equation*}
	for $-k-2i-1 \leq n \leq -1$. 
	Using the same formula from Lemma~\ref{lem:fil}, the sheaf $\pi_{N-2,*}(F^{2/1}\cV_{k,i-1}[-2])$ is resolved by $\cF^{\bullet}_{i-1,2}$, where
	\begin{equation*} 
		\cF^{-n}_{i-1,2} = \bS^{k+i-1,k+i+n+1} V_{2}^{\prime} \otimes \extp^{N-3-k-2i-n} V_{N-2} \langle 4-N -3k-2i-2n \rangle,
	\end{equation*}
	for $-k-2i-1\leq n \leq -2$.
	Similarly, the sheaf $\pi_{N-2,*}(F^{0}\cV_{k,i+1})$ has a resolution $\cF^{\bullet}_{i+1,0}$ with terms
	\begin{equation*} 
		\cF^{-n}_{i+1,0} = \bS^{k+i,k+i+n} V_{2}^{\prime} \otimes \extp^{N-3-k-2i-n} V_{N-2}\langle 4-N -3k-2i-2n \rangle,
	\end{equation*}
	for $-k-2i-1 \leq n \leq 0$. 
	By our construction, the (nonzero) $\tG$-equivariant morphism $\pi_{N-2,*}(\iota_{2/1}): \cF^{\bullet}_{i-1,2} \to \cF^{\bullet}_{i,1}$ is unique up to a scalar multiple. 
	An obvious example of such an equivariant morphism is given by the inclusion $\cF^{-n}_{i-1,2} \hookrightarrow \cF^{-n}_{i,1}$ into the second summand in $\cF^{-n}_{i,1}$ at each $-k-2i-1\leq n \leq -2$. 
	For the same reason, the equivariant morphism $\pi_{N-2,*}(\iota_{1/0})$ is given by the identity map on the first summand in $\cF_{i,1}^{-n}$ onto $\cF_{i+1,0}^{-n}$ at each $-k-2i-1\leq n \leq -1$. 
	
	When $i+1>0$, the resolutions are given by a slightly different formula in Lemma~\ref{lem:fil}, where the terms are mostly concentrated in negative degrees, but the same argument applies. 
	
	Now we can cancel the common terms that appear in same degrees in $\cF^{\bullet}_{i-1,2}$, $\cF^{\bullet}_{i,1}$, and $\cF^{\bullet}_{i+1,0}$ in the totalization (\ref{eq:total}), and after that we are left with 
	\[
	\cF^{0}_{i+1,0} = \bS^{k+i,k+i}V_{2}^{\prime} \otimes \extp^{N-3-k-2i}V_{N-2} \langle 4-N-3k-2i\rangle.
	\]
	This cancellation is carried out at each step of the iterated cones (\ref{eq:itcone1}).
	Taking into account the shift $\langle 2(N-4) \rangle$ coming from the kernel of $\bbf^{(N-4)}$, the convolution (\ref{eq:itcone1}) becomes $\on{Conv}( \cV_{k,n^{+}_{k}}^{\prime}[-n^{-}_{k}-n^{+}_{k}] \to \cdots \to \cV_{k,-n^{-}_{k}}^{\prime})$.
\end{proof}

We next calculate the convolution (\ref{eq:itcone2}) for $k=0,-1$. 
If we take the cones iteratively from the right end, we encounter the same situation as in Lemma~\ref{lem:neq0-1} except in the last cone.

\begin{lem}\label{lem:conv0}
	We have 
	\[
	\bT(\cE_{0}) = \on{Conv} \left(\cV_{0,n^{+}_{0}}^{\prime}[-n^{+}_{0}] \to \cdots \to  \cV_{0,i}^{\prime}[-i] \to \cdots \to \cV_{0,0}^{\prime} \right). 
	\]
\end{lem}

\begin{proof}
	By Lemma~\ref{lem:eimage}, we have $\be^{(2)} (\cV_{0,0})=0$ and $\be^{(1)}(\cV_{0,0}) = \bS^{0}V_{1} \langle 1-N \rangle$, so the image $\bT(\cV_{0,0})$ is the cone
	\begin{equation}\label{eq:V00cone}
		\on{Cone}\left(\bbf^{(N-3)}(\cO)[-1]\langle 2-N \rangle \xlongrightarrow{d_{1}} \bbf^{(N-4)}(\cV_{0,0}) \right). 
	\end{equation}
	By Lemma~\ref{lem:fimage}, we can resolve the term on the left-hand side in this cone by 
	\begin{equation}\label{eq:F1V00}
		\cF_{1}^{-n} = \bS^{n+1,0}V_{2}^{\prime} \otimes \extp^{N-4-n} V_{N-2} \langle N-6-2n \rangle, \quad -1\leq n \leq N-4. 
	\end{equation}
	On the other hand, we have already seen that the term on the right-hand side is an extension sheaf sitting in the triangle
	\[
	\pi_{N-2,*}(F^{2/1}\cV_{0,0})[-1] \longrightarrow \pi_{N-2,*}(F^{1/0}\cV_{0,0})\longrightarrow \bbf^{(N-4)}(\cV_{0,0}), 
	\]
	where each direct image $\pi_{N-2,*}$ in this triangle has a resolution by Lemma~\ref{lem:fil}. 
	More precisely, the sheaf $\pi_{N-2,*}(F^{1/0}\cV_{0,0})$ has a resolution
	\begin{equation}\label{eq:F10V00}
		\cF^{-n}_{0,1} = \left(\bS^{n+1,0}V_{2}^{\prime} \oplus \bS^{n,1} V_{2}^{\prime} \right) \otimes \extp^{N-4-n} V_{N-2}\langle 2- N -2n \rangle
	\end{equation}
	for $ 0\leq n \leq N-4$. 
	Comparing (\ref{eq:F10V00}) (further shifted by $\langle 2(N-4) \rangle$, which comes from the kernel of $\bbf^{(N-4)}$) with (\ref{eq:F1V00}), we expect to cancel their common terms in the cone (\ref{eq:V00cone}). 
	
	By Lemma~\ref{lem:fimage}, we have a resolution of $\bbf^{(1)}\be^{(1)}[3-N]\langle N-3 \rangle (\cV_{0,0})$ by  
	\begin{equation*}
		\cF^{-n} = \bS^{-n,0} V_{2} \otimes \extp^{n+N-3} \bC^{N}/V_{2}\langle 2-N-2n\rangle, \quad 3-N \leq n \leq 0. 
	\end{equation*}
	Like we did in the proof of Lemma~\ref{lem:counit}, we can check that $\Ext^{>0}(\cF^{-n},\cV_{0,0}) = 0$, and then claim that the counit $\varepsilon: \bbf^{(1)}\be^{(1)}[3-N]\langle N-3 \rangle \to \on{id}$ is given by a single nonzero map $\iota: \cF^{0}=\cV_{0,0}\to \cV_{0,0}$. 
	By using the adjunction $\pi_{2,*} \dashv \pi_{2}^{!}$, we see that it comes from a map 
	\[V_{1}^{\vee} \longrightarrow  (\bC^{N}/V_{2})^{\vee} \langle 2 \rangle \]
	over $\fZ_{1,2}$, which is exactly the dual of the tautological map $\bC^{N}/V_{2}\to V_{1}$ (see the paragraph of (\ref{eq:dV00}) where we took the exterior power $\extp^{\ell=1}$ of the dual of the tautological direction $\bC^{N}/V_{2}\to V_{1}$ in the Koszul complex construction when computing the cohomology). 
	In particular, this implies that the map $\iota$ is constant over the $V_{2}\hookrightarrow \bC^{N}$ direction. 
	By using the same argument as in the last part of the proof of Lemma~\ref{lem:counit}, we claim that the map $\iota:\cV_{0,0}\to \cV_{0,0}$ is a scalar multiple of the identity. 
	
	The differential $d_{1}$ is defined by including $\bbf^{(N-3)}$ into its lowest degree copy in the decomposition of $\bbf^{(N-4)}\bbf^{(1)}$, followed by the counit $\varepsilon$. 
	The differential is thus a nonzero scalar multiple of the identity on each $\cF_{1}^{-n}$ for $n\geq 0$ following the argument we used in Lemma~\ref{lem:la}.  
	The remaining summands $\bS^{n,1} V_{2}^{\prime} \otimes \extp^{N-4-n} V_{N-2} $ in (\ref{eq:F10V00}) are cancelled with the same terms in the resolution of $\pi_{N-2,*}(F^{0}\cV_{0,1})$ in the iterated cones (\ref{eq:itcone2}) by the procedure from the previous Lemma~\ref{lem:neq0-1}. 
	In the end, the convolution (\ref{eq:itcone2}) for $k=0$ is of the form $\on{Conv}(\cV_{0,n^{+}_{0}}^{\prime}[-n^{+}_{0}] \to \cdots \to \cV_{0,0}^{\prime})$.
	\end{proof}

\begin{lem}\label{lem:conv-1}
	We have \[\bT(\cE_{-1}) = \on{Conv}\left( \cV_{-1,n^{+}_{-1}}^{\prime}[-n^{+}_{-1}] \to \cdots \to  \cV_{-1,i}^{\prime}[-i] \to \cdots \to \cV_{-1,0}^{\prime} \right).\]
\end{lem}

\begin{proof}
	In this case, the sheaf $\pi_{N-2,*}(\cV_{-1,0})$ has a resolution of a different form (see Lemma~\ref{lem:fil}), and the cancellation will happen between this resolution, $\Theta_{1}(\cV_{-1,0})$, and $\pi_{N-2,*}(F^{j/j-1}\cV_{-1,1})$, $j=0,1$ in the cone of $\pi_{N-2,*}(d)$. 
	We first calculate the convolution 
	\begin{equation}\label{eq:T-10}
		\bT(\cV_{-1,0}) = \on{Conv} \left( \bbf^{(N-2)} \be^{(2)}  [-2] \langle 2 \rangle \xrightarrow{d_{2}} \bbf^{(N-3)} \be^{(1)}  [-1] \langle 1 \rangle \xrightarrow{d_{1}} \bbf^{(N-4)} \right) (\cV_{-1,0}).
	\end{equation}
	By Lemma~\ref{lem:eimage}, we have $\be^{(2)}(\cV_{-1,0}) = \det(\bC^{N})^{\vee} \langle 1-N \rangle$. 
	The kernel of $\bbf^{(N-2)}$ is the line bundle $\det(V_{N-2})^{2} \langle 2(N-2)\rangle$ on the zero section $\fZ_{0,N-2} = \Gr(N-2,N)$, and $\pi_{N-2}: \Gr(N-2,N)\to \TGr{N-2}{N}$ is the inclusion. 
	Thus, the direct image $\bbf^{(N-2)} \be^{(2)}(\cV_{-1,0})  [-2] \langle 2\rangle$ is resolved by the Koszul complex 
	\begin{align}
		\cF^{-n} = & \det(V_{N-2})^{2} \otimes \det(\bC^{N})^{-1} \otimes \extp^{n+2} (V_{2}^{\prime} \otimes V_{N-2}^{\vee}) \notag \\
		= & \left( \bS^{n+1,-1}V_{2}^{\prime} \otimes \extp^{N-4-n} V_{N-2} \right) \oplus \bigoplus_{|\mu| =n} \bS^{\mu} V_{2}^{\prime} \otimes \bS^{(\nu,-1)} V_{N-2} \label{eq:resolnTheta2},
	\end{align}
	where the term is shifted by $\langle N-5-2n \rangle$ for $-2 \leq n \leq 2N-6$.
	Here the direct sum is over all $\mu$ such that $0 \leq \mu_{2} \leq \mu_{1} \leq N-3$ and $\nu_{i} = 1$ for $i\leq N-3-\mu_{1}$, $\nu_{i} = -1$ for $i\geq N-2-\mu_{2}$, and zero otherwise.
	Note that these terms have also appeared in the resolution of $\bbf^{(N-3)}(V_{1}^{\vee})$ in Lemma~\ref{lem:fimage}.
	Recall that we have 
	\[
	\be^{(1)}(\cV_{-1,0}) = \left\{V_{1}^{\vee} \langle -N \rangle \to \underline{(\bC^{N}/V_{1})}^{\vee} \langle 2-N \rangle \right\}
	\]
	by Lemma~\ref{lem:eimage}.
	Further apply $\be^{(1)}$, we get $\be^{(1)}(V_{1}^{\vee}[1]\langle -N \rangle) = \det(\bC^{N})^{-1}[1]\langle -N \rangle$ and $\be^{(1)}((\bC^{N}/V_{1})^{\vee} \langle 2-N \rangle) = \det(\bC^{N})^{-1}[-1] \langle 2-N \rangle$.
	By definition, the differential $d_{2}$ includes $\be^{(2)}(\cV_{-1,0}) =\det(\bC^{N})^{\vee}  \langle 1-N \rangle$ into the lower degree factor $\be^{(2)}[1]\langle -1\rangle $ of $\be^{(1)}\be^{(1)} \simeq \be^{(2)}[1]\langle -1\rangle \oplus \be^{(2)}[-1]\langle 1\rangle $, which coincides with $\be^{(1)}(V_{1}^{\vee}[1]\langle -N \rangle)$ as we can see now. 
	Next we spell out the counit $\varepsilon: \bbf^{(1)}\be^{(1)}[1-N]\langle N-1 \rangle \to \on{id}$ on that factor. 
	The direct image $\bbf^{(1)}(\det(\bC^{N})^{-1})$ is supported on the zero section $\Gr(1,N)$, so it has a Koszul resolution 
	\[
	\left\{ V_{1}^{\vee} \to (\bC^{N}/V_{1})^{\vee} \to \cdots \to V_{1}^{\otimes N-2-n} \otimes \extp^{N-1-n} (\bC^{N}/V_{1})^{\vee} \to \cdots  \right\}_{0\leq n \leq N-1},
	\]
	where the term in degree $n$ is shifted by $\langle N-1-2n \rangle$. 
	Following the argument in Lemma~\ref{lem:counit} (as well as in Lemma~\ref{lem:conv0}), we claim that the counit map on this complex (further shifted  by $[1]\langle -N \rangle$) 
	with target $\be^{(1)}(\cV_{-1,0})$, is determined by two nonzero maps $\iota_{1}: V_{1}^{\vee} \to V_{1}^{\vee}$ and $\iota_{0}: (\bC^{N}/V_{1})^{\vee} \to (\bC^{N}/V_{1})^{\vee}$, which come from the unique map $\cO \to \cO$ over $\fZ_{0,1} = \Gr(1,N)$ via the adjunction $\pi_{1,*} \dashv \pi_{1}^{!}$. 
	Again, this implies both maps $\iota_{1},\iota_{0}$ are nonzero scalar multiples of the identity. 
	
	By Lemma~\ref{lem:fimage}, we have a resolution of the sheaf $\bbf^{(N-3)}(V_{1}^{\vee} )$ by 
	\begin{equation}\label{eq:resoln0}
		\cF^{-n} = \bigoplus_{|\mu| = n} \bS^{\mu} V_{2}^{\prime} \otimes \bS^{(\nu,-1)} V_{N-2} \langle 2N-6-2n \rangle, \quad 0\leq n \leq 2N-6,
	\end{equation}
	where $0 \leq \mu_{2} \leq \mu_{1} \leq N-3$ and $\nu_{i} = 1$ for $i\leq N-3-\mu_{1}$, $\nu_{i} = -1$ for $i\geq N-2-\mu_{2}$, and zero otherwise. 
	On the other hand, the image $\bbf^{(N-3)} ((\bC^{N}/V_{1})^{\vee}[-1])$ is an extension of
	$\pi_{N-2,*}((V_{N-2}/V_{1})^{\vee}\otimes \det(V_{N-2}/V_{1}))[-1]$ by $\pi_{N-2,*}((V_{2}^{\prime})^{\vee} \otimes \det(V_{N-2}/V_{1}))[-1]$. 
	By Lemma~\ref{lem:fimage} again, these two sheaves are resolved respectively by
	\begin{align}\label{eq:resoln1}
		\cF^{-n} =& \bS^{n+1,0}V_{2}^{\prime}\otimes \extp^{N-5-n} V_{N-2} \langle -2n -2 \rangle\\ &\oplus  \bigoplus_{|\mu| = n+2} \bS^{\mu} V_{2}^{\prime} \otimes \bS^{(\nu,-1)} V_{N-2} \langle -2n -4 \rangle \label{eq:resoln1-1}
	\end{align}
	for $-1\leq n \leq 2N-8$, where $1 \leq \mu_{2}\leq \mu_{1}\leq N-3$ and $\nu_{i} = 1$ for $i\leq N-3-\mu_{1}$, $\nu_{i} = -1$ for $i\geq N-2-\mu_{2}$, and zero otherwise; and 
	\begin{equation}\label{eq:resoln2}
		\cF^{-n} = \left( \bS^{n+1,-1}V_{2}^{\prime} \oplus \bS^{n,0}V_{2}^{\prime} \right) \otimes \extp^{N-4-n} V_{N-2}\langle -2n-2 \rangle, \quad -1 \leq n \leq N-4.
	\end{equation} 
	The identity map $\iota_{1}:V_{1}^{\vee} \to V_{1}^{\vee}$ induces a morphism $\bbf^{(N-3)}(\iota_{1})$ from the complex (\ref{eq:resolnTheta2}) to (\ref{eq:resoln0}) (further shifted by $\langle 1-N\rangle$, where $\langle -N \rangle$ comes from $V_{1}^{\vee}\langle -N\rangle$ and $\langle 1 \rangle$ comes from $\Theta_1$).
	Following the argument in Lemma~\ref{lem:la}, we claim that in the cone of $d_{2}: \Theta_{2}[-2](\cV_{-1,0}) \to \Theta_{1}[-1](\cV_{-1,0})$, the common terms in  (\ref{eq:resolnTheta2}) and (\ref{eq:resoln0}) are cancelled. 
	For the same reason, the map $\iota_{0}: (\bC^{N}/V_{1})^{\vee} \to (\bC^{N}/V_{1})^{\vee}$ induces a morphism from (\ref{eq:resolnTheta2}) to (\ref{eq:resoln2}) (further shifted by $\langle N-3 \rangle$ with $\langle 2(N-3) \rangle$ coming from the kernel of $\bbf^{(N-3)}$, $\langle 2-N \rangle$ coming from $(\bC^{N}/V_1 )^{\vee} \langle 2-N \rangle$, and $\langle 1\rangle $ coming from $\Theta_1$),
	and the common summands $\bS^{n+1,-1}V_{2}^{\prime} \otimes \extp^{N-4-n} V_{N-2}$ in (\ref{eq:resolnTheta2}) and (\ref{eq:resoln2}) are cancelled in $\on{Cone}(d_{2})$. 
	In conclusion, except the highest degree term $\cV_{-1,0}^{\prime} \langle N-1 \rangle$,
	the resolution (\ref{eq:resolnTheta2}) of $\Theta_{2}[-2](\cV_{-1,0})$ is cancelled in the cone of $d_{2}$. 
	We are left with the resolution (\ref{eq:resoln1}, \ref{eq:resoln1-1}) and the remaining summands $\bS^{n,0}V_{2}^{\prime}\otimes \extp^{N-4-n}V_{N-2}$ from (\ref{eq:resoln2}).    
	
	On $\fZ_{1,2}$, there is a short exact sequence $0\to V_{2}/V_{1} \to \bC^{N}/V_{1} \to \bC^{N}/V_{2} \to 0$. 
	So, the sheaf $\bbf^{(1)}((\bC^{N}/V_{1})^{\vee}\langle 2-N \rangle)$ is an extension of $\pi_{2,*}((V_{2}/V_{1})^{\vee} \otimes (V_{2}/V_{1})^{N-3})$ by $\pi_{2,*}((\bC^{N}/V_{2})^{\vee} \otimes (V_{2}/V_{1})^{N-3})$. 
	Using the resolution for $\bbf^{(1)}(V_{1})$ from Lemma~\ref{lem:fimage} and the projective formula, we obtain a resolution of $\pi_{2,*}((V_{2}/V_{1})^{\vee} \otimes (V_{2}/V_{1})^{N-3})$ by 
	\begin{equation*}
		\cF^{-n} = 
		\begin{cases}
			\bS^{N-4-n,0}V_{2}\otimes \extp^{n} \bC^{N}/V_{2} \langle -2n \rangle, & 0\leq n \leq N-4,\\
			\det(V_{2})^{-1}\otimes \det(\bC^{N}/V_{2}) \langle 4-2N \rangle, & n=N-3. 
		\end{cases}
	\end{equation*}
	The counit $\bbf^{(1)}\be^{(1)}[3-N]\langle N-3 \rangle \to \on{id}$ restricts to a map from this complex to $\cV_{-1,0}$, 
	which, by adjunction, actually comes from the unique map $\cO \to \cO$ on $\on{Fl}(1,2;N)$ (see the paragraph of (\ref{eq:dV00}) where we took the exterior power $\extp^{\ell=0}$ when computing the term $(\bC^{N}/V_{1})^{\vee}$ of the resolution of $\be^{(1)}(\cV_{-1,0})$).
	This leads to the cancellation of the summands (\ref{eq:resoln1-1}) (further shifted by $\langle N-3 \rangle$) with the same terms from the resolution of $\bbf^{(N-4)}(\cV_{-1,0})$ in Lemma~\ref{lem:fil}. 
	The convolution (\ref{eq:T-10}) is now reduced to an extension between (\ref{eq:resoln1}) and the remaining summands from (\ref{eq:resoln2}), i.e.\ it sits in a triangle of the form
	\begin{align*}
		& \left\{ \bS^{n+1,0}V_{2}^{\prime}\otimes\extp^{N-5-n}V_{N-2} \langle N-5-2n \rangle \right\}_{-1\leq n \leq N-5} \longrightarrow \\ & \left\{  \bS^{n,0}V_{2}^{\prime}\otimes \extp^{N-4-n}V_{N-2} \langle N-5-2n \rangle \right\}_{0\leq n\leq N-4}[1] \oplus  \cV_{-1,0}^{\prime}[0] 
		\longrightarrow \bT(\cV_{-1,0}). 
	\end{align*}
	The map $\bT(d): \bT(\cV_{-1,0})[-1]\to \bbf^{(N-4)}(\cV_{-1,1})$ is uniquely determined by two $\tG$-equivariant morphisms 
	\begin{align*}
		\left\{ \bS^{n+1,0}V_{2}^{\prime}\otimes\extp^{N-5-n}V_{N-2} \langle N-5-2n \rangle \right\}_{n} & \longrightarrow \pi_{N-2,*} (F^{1/0}\cV_{-1,1}) \langle 2N-8\rangle, \\
		\left\{\bS^{n,0}V_{2}^{\prime}\otimes \extp^{N-4-n}V_{N-2}\langle N-5-2n \rangle  \right\}_{n} & \longrightarrow \pi_{N-2,*}(F^{0}\cV_{-1,1})\langle 2N-8\rangle,
	\end{align*}
	where the target sheaves have resolutions from Lemma~\ref{lem:fil}. 
	In the totalization of $\bT(d)$, most of the terms are cancelled and only the cone of $\cV_{-1,1}^{\prime} [-1]\to \cV_{-1,0}^{\prime}$ from $\bT(\cV_{-1,0})$ is left. 
	Combining with our previous Lemma~\ref{lem:neq0-1} on what happens in the iterated cones taken before the last $\cV_{-1,1}$, we claim that the convolution (\ref{eq:itcone2}) for $k=-1$ is equivalent to $\on{Conv}( \cV_{-1,n^{+}_{-1}}^{\prime}[-n^{+}_{-1}] \to \cdots \to \cV_{-1,0}^{\prime})$.
\end{proof}

\begin{proof}[Proof of Theorem~\ref{thm:flop}]
	This is a combination of Lemma~\ref{lem:la}, Lemma~\ref{lem:neq0-1}, Lemma~\ref{lem:conv0} and Lemma~\ref{lem:conv-1}. 
\end{proof}

\subsection{Bar involution}
We briefly recall the definition of the bar involution on the equivariant K-theory of the cotangent bundle of a Grassmannian. 
The construction was proposed by Lusztig \cite{Lusztig} and achieved by Varagnolo and Vasserot \cite{VV03} for quiver varieties of simply-laced type.

Let $H \subset G$ be the maximal torus of diagonal matrices and $\tH = H\times \Cx$.  
Consider the Grothendieck group 
\[K_{\tH}(\TGr{n}{N})
\]
 of $\tH$-equivariant coherent sheaves on $\TGr{n}{N}$.
This is a module over the representation ring $K_{\tH}(\on{pt})$ of $\tH$. 
Suppose $q$ is the natural representation of $\Cx$, then we may identify $K_{\Cx}(\on{pt}) \cong \mathbb{Z}[q, q^{-1}]$ with the ring of Laurent polynomials in $q$. 
Recall that we introduced in \S\ref{sec:grading} an internal degree shift $\langle \cdot \rangle$ on the gradings of $\Cx$-equivariant sheaves. 
With this convention, for any equivariant coherent sheaf $F \langle d \rangle$ on $\TGr{n}{N}$, its K-theory class is denoted by $q^d [F]$.  
In particular, we have classes of the tautological vector bundles
\begin{equation}\label{eq:kconv}
	\sV_n = [V_n], \quad q\sW - \sV_n  = [\bC^N/V_{n}],
\end{equation}
where $\sV_n$ and $\sW$ are the elements in the equivariant K-theory $K_{\tH}(\TGr{n}{N})$ that are induced by the natural representations of $\GL(n)\cong \GL(V_{n})$ and $\GL(N)\cong \GL(\bC^N)$ respectively\footnote{More precisely, for any representation $V$ of $\GL(n)\times \GL(N)$, we have an induced $\GL(N)$-equivariant vector bundle $\mu^{-1}(0)^{\chi\text{-ss}}\times_{\GL(n)} V$ on the GIT quotient $\TGr{n}{N}$ (see \S\ref{sec:quiver}).}. 
The parameter $q$ in (\ref{eq:kconv}) is due to the fact that the tautological direction $a:V_n \hookrightarrow \bC^N$ in $\TGr{n}{N}$ has degree one for our choice of the $\Cx$-action.
The convention is also compatible with the degrees of the noncommutative algebra $A$ that we computed in \S\ref{sec:Koszul}. 
For example, we have $\Ext^{0}(V_2,\cO)_{1} \cong (\bC^{N})^{\vee}$, whose K-theory class is $q^{-1}\sW^{\vee}$, sitting exactly in degree one.  

Inspired by Lusztig's work \cite{L1,L2} on the equivariant K-theory of Springer resolutions, one is led to look for a $K_{H}(\on{pt})$-linear involution $\beta$ on $K_{\tH}(\TGr{n}{N})$, which is $K_{\Cx}(\on{pt})$-antilinear with respect to the involution 
\[
\bar{(\cdot)}:K_{\Cx}(\on{pt}) \longrightarrow K_{\Cx}(\on{pt}), \quad q \longmapsto q^{-1}. 
\] 
It turns out that this \textit{bar involution} is given by the composition
\[
\beta = T_{w_0} \circ b \cdot c \otimes (-) \circ \omega^* \circ \bD  
\]
of the following operators \cite[\S 6.1]{VV03}.
\begin{enumerate}[wide, labelindent=0pt, label = (\alph*)]
	\item The dualizing functor $\bD$, which is given by $\bD([F]) = (-1)^{d} q^{-d} [F^{\vee}]$ for any locally free $F$. Here $d$ is the dimension of ${\TGr{n}{N}}$.
	
	\item The isomorphism of quiver varieties 
	\[\omega: \TGr{N-n}{N} \xlongrightarrow{\dagger} \TGr{N}{N-n} \xlongrightarrow{S_{w_0}} \TGr{n}{N},\]
	where the isomorphisms $\dagger$ and $S_{w_0}$ are defined in \S\ref{sec:quiver}. 
	
	\item The constant $b = (-q)^{-n} q^{d}$. 
	
	\item The normalization
		 \begin{align*}
		 	c= (-1)^{n(N-1)} q^{n^2 +nN - 2N^2}  & \det(q\sW-\sV_{N-n})^{N-n} \\
		 	  \otimes &  \det(\sV_{N-n})^{2N-n}
		 	  \otimes \det(\sW)^{N(2n-N-1)}.
		 \end{align*}
	This is a specialization of \cite[\S5.7]{VV03} in the type $A_1$ case. 
	
	\item The braid group action
	\[
	T_{w_0}: K_{\tH}(\TGr{N-n}{N}) \simrightarrow K_{\tH}(\TGr{n}{N})
	\]
	as defined in \cite[\S 3]{VV03}. 
    As explained in Lemma~\ref{lem:TT} below, it is equivalent to the flop equivalence $[\bT^{-1}_{n}]$ only up to a twist by line bundles.
	For readers not familiar with quantum groups, this identification also serves as a definition. 
\end{enumerate}

\begin{lem}\label{lem:TT}
	We have 
	\[
	T_{w_0} = \sL_{n} \otimes \det(\sV_{n}) \otimes (-) \circ [\bT_{n}^{-1}] \circ  q^{2n}  \det(q\sW- \sV_{N-n})^{-1} \otimes \sL_{N-n}^{-1} \otimes (-),
	\]
	where $\sL_{m} = q^{m^{2}-2mN} \det(\sV_{m})^{m}\otimes \det(\sW)^{-mN}$.
\end{lem}

\begin{proof}
	The operator $T_{w_0}$ is defined following \cite[\S 5.2]{quantum} (denoted by $T_{i,1}^{\prime \prime}$ therein).
	In our type $A_{1}$ case, it has the form\footnote{In fact, this is a reduced form thanks to \cite[Proposition 5.2.2]{quantum} and \cite[Lemma 5.3]{CKL-sl2}. }
	\[
	T_{w_0}  = \sum_{i=0}^{n} (-q)^{i} f^{(i)} e^{(N-2n+i)},
	\]
	where $e^{(N-2n+i)}$, $f^{(i)}$ stand for the divided powers of the Kac--Moody generators $e,f$. 
	These operators act on the K-theory of the quiver variety by the convolution product with the elements (see \cite[Lemma 5.4]{VV03})
	\begin{align*}
		q^{(n+N)(N-2n+i)}    \det(\sV_{N-n})^{n-N}  \otimes &  \det(\sV_{n-i})^{N-n} 
		\otimes  \det(\sW)^{N(N-2n+i)}\\ 
		\otimes & \det(q\sW-\sV_{N-n})^{-(N-2n+i)},\\
		q^{-i(N-n+i)} \det(\sV_{n})^{N-n+i} \otimes & \det(\sV_{n-i})^{n-N} \otimes \det(\sW)^{-iN}
	\end{align*}
	of $K_{\tH}(\fZ_{n-i,N-n})$ and $K_{\tH}(\fZ_{n-i,n})$ respectively. 
	A direct comparison with the kernels $\cE^{N-n,n-i}$, $\cF^{n-i,n}$ shows that 
	\begin{align*}
		e^{(N-2n+i)} & =  \sL_{n-i} \otimes (-)  \circ [\be^{(N-2n+i)}] \circ \sL_{N-n}^{-1} \otimes (-) , \\
		f^{(i)}  & =  \sL_{n} \otimes (-)  \circ [\bbf^{(i)}] \circ \sL_{n-i}^{-1} \otimes (-).
 	\end{align*}
 	This then implies $T_{w_0} =  \sL_{n} \otimes (-)  \circ [\bT_{N-n}] \circ  \sL_{N-n}^{-1}\otimes (-)$. 
 	Combining this equation with the following one from \cite[Lemma 7.4]{CKL}
 	 \[
 	 [\bT_{N-n}] = \det(\sV_{n}) \otimes (-)  \circ [\bT_{n}^{-1}] \circ q^{2n}  \det(q\sW- \sV_{N-n})^{-1}\otimes (-),
 	 \]
 	 we prove the lemma. 
\end{proof}

Define a pairing $(- \mmid -): K_{\tH}(\TGr{n}{N}) \times  K_{\tH}(\TGr{n}{N}) \to \on{Frac}(K_{\tH}(\on{pt}))$ such that\footnote{This is a simplified expression; see \cite[p.247]{VV03}.}
\[
\overline{( F \mmid G)} = \left[ R\Gamma ( \bD(F) \otimes^{L} \beta(G) ) \right]^{\dagger}.
\]
Following Lusztig, we consider 
\[
B = \left\{ b\in K_{\tH}(\TGr{n}{N}) \mid \beta(b) = b, \,\, (b \mmid b) \in 1+ q^{-1} K_{H}(\on{pt}) \ldb q^{-1} \rdb  \right\}. 
\]
The set $B$ is a \textit{signed basis} of the $K_{\Cx}(\on{pt})$-module $K_{\tH}(\TGr{n}{N})$, which means there exists a basis $B^{\circ}$ of $K_{\tH}(\TGr{n}{N})$ such that $B = B^{\circ} \sqcup -B^{\circ}$. 
Moreover, one finds that the basis $B$ is `orthonormal' with respect to the above pairing in degree zero: for any $b,b'\in B$, we have 
\[
(b \mmid b') \in \delta_{b,b'} + q^{-1} K_{H}(\on{pt}) \ldb q^{-1} \rdb.
\]
We refer to \cite[\S 7]{VV03} for more details. 

\subsection{Invariance}\label{sec:inv}
Now we are ready to show that the K-theory classes 
$[\cE_{\la}]$ and $[\cE_{k}] = \sum_{i}[\cV_{k,i}]$
of the indecomposable summands of our tilting bundle are invariant under the bar involution and that they provide a categorical lift of the canonical basis up to shifts by $K_{H}(\on{pt})$. 

\begin{thm}\label{thm:inv}
	Tensoring with $\cO(-3) \langle 7-2N \rangle$, the indecomposable summands of $\cE$ have K-theory classes that are invariant under the bar involution $\beta$. 
\end{thm}

\begin{proof}
	From \cite[Lemma 4.6]{VV03}, we know that
	\[
	\omega^* (\sV_{2}^{\vee}) = q^{-2} (q\sW - \sV_{N-2}), \quad \omega^* (q^{-1}\sW^{\vee} - \sV_{2}^{\vee}) = q^{-2} \sV_{N-2}.  
	\]
	Spelling out the action of the involution $\beta$, we find that
	\begin{align*}
		\beta ([\cE_{\la} \otimes \det(V_2)^3]) & =  q^{14-4N} \det(\sV_2)^{3} \otimes [\bT_{2}^{-1}] \left( [\bT_{2}(\cE_{\la})] \right), \\
		\beta ([\cV_{k,i}\otimes \det(V_2)^3]) &=  q^{14-4N} \det(\sV_2)^{3} \otimes [\bT_{2}^{-1}] \left( [\cV_{k,i}^{\prime}] \right). 
	\end{align*}
	Because $\beta$ is $\mathbb{Z}[q,q^{-1}]$-antilinear and by Theorem~\ref{thm:flop}, the classes $[\cE_{\la} \otimes \cO(-3)]$ and $[\cE_{k} \otimes \cO(-3)]$ become invariant under $\beta$ once we shift them by $q^{7-2N}$. 
\end{proof}

\begin{cor}
	Tensoring with $q^{7-2N}[\cO(-3)]$, the K-theory classes of $\{\cE_{\la},\cE_k \}_{\la,k}$ belong to the signed basis $B$. 
\end{cor}

\begin{proof}
	We have already proved the invariance. 
	Now, we compute the pairing 
	\begin{align*}
		\overline{([\cE_{a}] \mmid [\cE_{b}])} &=  \left[R\Gamma \left( \bD R\mathcal{H}om \left(\cE_{b}, \cE_{a} \right)   \right) \right]^{\dagger} \\
		& = \left[\Ext^{0}(\cE_{b}, \cE_{a})^{\vee} \right]^{\dagger}. 
	\end{align*}
	By Theorem~\ref{thm:Koszul}, we immediately have $([\cE_{a}] \mmid [\cE_{b}]) \in \delta_{a,b} + q^{-1} K_{H}(\on{pt}) \ldb q^{-1} \rdb$. 
\end{proof}

\subsection{Resolutions}\label{sec:cal}

\begin{lem}\phantomsection\label{lem:eimage} 
	\begin{enumerate}[wide, labelindent=0pt, label = (\arabic*)]
		\item When $\la_{2}\geq 0$, we have $\be^{(1)}( \cE_{\la}) = 0$.
		When $\la_{2}= -1$, 
		\begin{equation*}
			\be^{(1)} (\cE_{\la}) = \bS^{\la_{1}+1} V_{1} \otimes \det(\bC^{N}/V_{1})^{-1} \langle 1\rangle, \quad \be^{(2)} (\cE_{\la}) = 0. 
		\end{equation*}
		
		\item For $i\neq 0,-1$, we have $\be^{(1)}(\cV_{k,i}) = 0$. 
		Otherwise, 
		\begin{align*}
			& \be^{(1)}(\cV_{k,0})[k] = \bS^{k} V_{1} \langle k+1-N \rangle = \be^{(1)}(\cV_{k,-1})[k-1], \quad k \neq 0,-1, \\
			&  \be^{(1)}(\cV_{0,0}) = \bS^{0} V_{1} \langle 1-N \rangle, \quad \be^{(1)}(\cV_{-1,0}) = \{V_{1}^{\vee}\langle -N \rangle \to \underline{ (\bC^{N}/V_{1})}^{\vee}  \langle 2-N\rangle \}.
		\end{align*}
		Moreover, $\be^{(2)}(\cV_{k,i})=0$ for all $k, i$ except $\be^{(2)}(\cV_{-1,0}) = \det(\bC^{N})^{\vee} \langle 1-N \rangle$. 
	\end{enumerate}
\end{lem}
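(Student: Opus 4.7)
The plan is to reduce both computations to the Borel--Weil--Bott (BWB) theorem. For $\be^{(1)}$, factor $\pi_{1}: \fZ_{1,2} \to \TGr{1}{N}$ as $\fZ_{1,2} \hookrightarrow Y \xrightarrow{\pi} \TGr{1}{N}$, where $Y := \TGr{1}{N} \times_{\Gr(1,N)} \bP(\bC^{N}/V_{1})$ and $\fZ_{1,2}\subset Y$ is the zero locus of the section of $\cO(1) \otimes V_{1}$ obtained by restricting the tautological $1$-form on $\TGr{1}{N}$ to the tautological sub line $\cO(-1) = V_{2}/V_{1} \subset \bC^{N}/V_{1}$. The resulting Koszul resolution
\[
0 \longrightarrow \cO(-1) \otimes V_{1}^{\vee} \longrightarrow \cO \longrightarrow \cO_{\fZ_{1,2}} \longrightarrow 0
\]
on $Y$ presents $\be^{(1)}(\cF)$ as the cone of $R\pi_{*}(\tilde\cE \otimes \cO(-1) \otimes V_{1}^{\vee}) \to R\pi_{*}(\tilde\cE)$, where $\tilde\cE$ is the natural extension to $Y$ of $\pi_{2}^{*}\cF \otimes \det(\bC^{N}/V_{2})^{-1} \otimes \det(V_{1})$. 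Since $\pi$ is a $\bP^{N-2}$-bundle, each $R\pi_{*}$ is computed by BWB on $\bP(\bC^{N}/V_{1})$.

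For part (1), filter $\bS^{\la}V_{2}$ on the flag variety using $V_{1}\subset V_{2}$, with graded pieces $V_{1}^{\la_{1}-s}\otimes(V_{2}/V_{1})^{\la_{2}+s}$. After the overall twist by $\det(\bC^{N}/V_{2})^{-1}\otimes \det(V_{1}) \cong \cO(-1)\otimes V_{1}\otimes \det(\bC^{N}/V_{1})^{-1}$, each piece is a power $\cO(m)$ on $\bP^{N-2}$ tensored with bundles pulled back from $\Gr(1,N)$. For $\la_{2}\geq 0$ every $m$ lies in the vanishing range $-(N-1) < m < 0$, so both Koszul pushforwards vanish; for $\la_{2}=-1$ only the $s=0$ piece of $\tilde\cE$ gives $\cO(0)$, contributing $V_{1}^{\la_{1}+1}\otimes \det(\bC^{N}/V_{1})^{-1}$, while the $\cO(-1)\otimes V_{1}^{\vee}$ twist remains in the vanishing range. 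The claim $\be^{(2)}(\cE_{\la})=0$ follows from a direct BWB on $\Gr(2,N)$ via $\fZ_{0,2} = \Gr(2,N)$: the concatenated weight $(-\la_{2},-\la_{1},2,\ldots,2)+\rho_{N}$ has a repeated entry for every admissible $\la$.

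For part (2), apply BWB on $\bP^{N-2}$ directly to $\cO(-(k+i+1))\otimes \extp^{m}\cR$, where $m=N-3-k-2i$. The resulting weight tuple has fixed entries $\{-1,\ldots,m-2\}\cup\{m,\ldots,N-3\}$ at positions $2,\ldots,N-1$ (missing only $m-1$) and a free first entry $N-3-k-i$; non-vanishing forces either the free entry to equal $m-1$ (giving $i=-1$, sorted via $k-1$ transpositions, which after recombining with $V_{1}^{k}\otimes\det(\bC^{N}/V_{1})^{-1}$ yields $\be^{(1)}(\cV_{k,-1})=\bS^{k}V_{1}[-(k-1)]$ since the twisted Koszul term vanishes) or to lie outside $[-1,N-3]$ (giving $(k,i)=(-1,0)$). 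For the twisted term $\tilde\cE \otimes \cO(-1) \otimes V_{1}^{\vee}$ the free entry shifts to $N-4-k-i$, so the analogous criterion triggers for $i=0$ (sorted via $k+1$ transpositions, producing $\bS^{k}V_{1}[-(k+1)]$ and hence $\be^{(1)}(\cV_{k,0})=\bS^{k}V_{1}[-k]$). In the boundary case $(k,i)=(-1,0)$ both pieces are already dominant in degree $0$ and produce $(\bC^{N}/V_{1})^{\vee}$ and $V_{1}^{\vee}$ respectively, giving the cone $\{V_{1}^{\vee} \to \underline{(\bC^{N}/V_{1})^{\vee}}\}$. Finally, $\be^{(2)}(\cV_{k,i})$ is computed by BWB on $\Gr(2,N)$ from $(-(k+i),-(k+i),2,\ldots,2,1,\ldots,1)+\rho_{N}$, which has repeated entries except at $(k,i)=(-1,0)$, where it equals $(N,N-1,\ldots,1)$ and yields $\det(\bC^{N})^{\vee}$.

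The main obstacle is the combinatorial case analysis underlying the BWB dot-action sorts: one must carefully track both the ``gap-filling'' non-vanishing cases (parameterized by $i \in \{-1,0\}$) and the exceptional dominant case $(k,i)=(-1,0)$, which is the only case where both Koszul pieces contribute and produces the only indecomposable cone (rather than a shifted line bundle) appearing in the lemma.
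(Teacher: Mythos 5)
Your proposal is correct and takes essentially the same route as the paper: both compute $\be^{(1)}$ by resolving $\cO_{\fZ_{1,2}}$ on the ambient projective bundle and then applying Borel--Weil--Bott on the $\bP^{N-2}$ fibre, and both compute $\be^{(2)}$ by restriction to the zero section $\Gr(2,N)$. Your explicit two-term Koszul complex on $Y=\TGr{1}{N}\times_{\Gr(1,N)}\bP(\bC^{N}/V_{1})$ is precisely the paper's Theorem~\ref{thm:torsion} specialised to the case where $T/U$ has rank one, so the two arguments are the same up to presentation; one small notational slip is that $\extp^{m}\cR$ in your part~(2) should read $\extp^{m}\bC^{N}/V_{2}$ (the rank $N-2$ quotient on the $\bP^{N-2}$ fibre), not the rank-one sub, though the weight bookkeeping that follows is for the correct bundle.
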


Note that $\bS^{k}V_{1}$, $0\leq k \leq N-2$ are `highest weight' objects in the sense that they are killed by $\be^{(1)}$.  

\begin{proof}
	For a Schur functor of the form $\bS^{\la}V_{2}\otimes \bS^{\mu}\bC^{N}/V_{2}$, the direct image 
	\[
	\be^{(1)}(\bS^{\la}V_{2}\otimes \bS^{\mu}\bC^{N}/V_{2}) = \pi_{1,*} \left( \bS^{\la} V_{2}\otimes \bS^{\mu}\bC^{N}/V_{2} \otimes \det(\bC^{N}/V_{2})^{-1}\otimes V_{1} \right)
	\]
	is a torsion sheaf supported on the locus where the tautological map $\bC^{N}/V_{1} \to V_{1}$ vanishes on $V_{2}$.
	Thus, by applying Theorem~\ref{thm:torsion} with 
	\begin{equation*}
		\cT = \Hom(\bC^{N}/V_{1},V_{1}), \quad \cU = \Hom(\bC^{N}/V_{2},V_{1}), \quad G/P = \Gr(V_{2}/V_{1}, \bC^{N}/V_{1})
	\end{equation*}
	relatively over $\Gr(V_{1},\bC^{N})$,
	we can resolve this torsion sheaf by
	\begin{equation*}
		\cF^{-n} = \bigoplus_{\ell \geq n} H^{\ell-n}_{G/P} \left( \bS^{\la}V_{2}\otimes \bS^{\mu}\bC^{N}/V_{2} \otimes \det(\bC^{N}/V_{2})^{-1}\otimes V_{1} \otimes  \extp^{\ell} V_{2}/V_{1} \otimes  \bS^{\ell} V_{1}^{\vee} \right). 
	\end{equation*}
	Here the Schur functor $\bS^{\la} V_{2}$ has a natural filtration with associated graded pieces \cite[\S 2.3]{Wey}
	\begin{equation}\label{eq:assocgr}
		\bigoplus_{d=0}^{\la_{1}-\la_{2}} \bS^{\la_{1}-d} V_{1} \otimes \bS^{\la_{2}+d} V_{2}/V_{1}. 
	\end{equation}
	
	Now we compute the cohomology by using the Borel--Weil--Bott theorem over $G/P$. 
	For the Schur functor $\cE_{\la}$, consider the concatenation weight 
	\[(-1, \cdots, -1, \la_{2}+d+\ell). \]
	When $\la_{2}\geq 0$, we have $0\leq \la_{2}+d+\ell \leq \la_{1}+\ell \leq N-3$.
	So, the weight is not in a dominant orbit. 
	On the other hand, if $\la_{2}= -1$, the only nonvanishing term is $\cF^{0} = H^{0} =  \bS^{\la_{1}+1}V_{1}\otimes \det(\bC^{N}/V_{1})^{-1}$ at $d= 0$ and $\ell =0$. 
	
	For the bundle $\cV_{k,i}$, we consider the concatenation weight 
	\begin{equation}\label{eq:dV00}
		(0,\cdots,0,-1,\cdots,-1,k+i+\ell)
	\end{equation}
	whose first $N-3-k-2i$ coordinates are zero, followed by the next $k+2i+1$ coordinates $-1$. 
	If $k=-1$ and $i=0$, it is already dominant for either $\ell = 0$ or $1$, and the resolution has terms $\cF^{0} = H^{0} = (\bC^{N}/V_{1})^{\vee}$ and $\cF^{-1} = H^{0} =  V_{1}^{\vee}$. 
	Otherwise, we need to permute the last coordinate of (\ref{eq:dV00}) towards the left by either $k+2i+1$ or $N-2$ consecutive transpositions to obtain a dominant weight.
	In that case, it is necessary to have $(k+i+\ell) - (k+2i+1) = \ell -i-1 = 0$ or $(k+i+\ell)-(N-2) \geq 1$ respectively.
	However, the latter is impossible since $k+i\leq N-3$.
	Thus, when $i=\ell-1$, the concatenation weight (\ref{eq:dV00}) is permuted into the zero weight, which contributes to the only nonvanishing term $\cF^{k+i} = H^{k+2i+1} = \bS^{k+i+1-\ell}V_{1} = \bS^{k}V_{1}$. 
	
	The kernel of $\be^{(2)}$ is supported on the zero section $\Gr(2,N)$, so we have 
	\begin{equation*}
		\be^{(2)}(\cV_{k,i}) = H^{\bullet} \left( \Gr(2,N), \cV_{k,i} \otimes \det(\bC^{N}/V_{2})^{-2} \right). 
	\end{equation*}
	It is straightforward to check that $\be^{(2)}(\cV_{k,i}) = 0$ except $\be^{(2)}(\cV_{-1,0}) = \det(\bC^{N})^{\vee}$. 
	
	The internal degree shift on the image can be computed by adding the one coming from the kernel of $\be$ with the shift $\langle -2\ell \rangle$ contributed by the power $\extp^{\ell}$ of the Koszul complex. 
\end{proof}

\begin{lem}\phantomsection\label{lem:fimage}
	\begin{enumerate}[wide, labelindent=0pt, label = (\arabic*)]
		
		\item For $-1\leq d \leq N-2$, the sheaf $\bbf^{(1)}(\bS^{d}V_{1})$ has a resolution 
		\begin{equation*}
			\cF^{-n} = \begin{cases}
				\bS^{N-3-n,d} V_{2} \otimes \extp^{n} \bC^{N}/V_{2} \langle N-2-2n \rangle, & 0\leq n \leq N-3 - d, \\
				\bS^{d-1,N-3-n} V_{2} \otimes  \extp^{n+1} \bC^{N}/V_{2} \langle N-4-2n \rangle, & N-2-d\leq n \leq N-3.
			\end{cases}
		\end{equation*} 
		
		\item For $0 \leq d \leq N-2$, the sheaf $\bbf^{(N-3)}(\bS^{d}V_{1})$ has a resolution  
		\begin{equation*}
			\cF^{-n} = \begin{cases}
				\bS^{d-1, n} V_{2}^{\prime} \otimes \extp^{N-2-n} V_{N-2}\langle 2N-4-2d-2n \rangle, &0\leq n \leq d-1,\\
				\bS^{n,d}V_{2}^{\prime}\otimes \extp^{N-3-n} V_{N-2} \langle 2N-6-2d-2n \rangle, & d\leq n \leq N-3.
			\end{cases}
		\end{equation*}
		In contrast, the sheaf $\bbf^{(N-3)}(\bS^{-1}V_{1})$ has a resolution 
		\begin{equation*}
			\cF^{-n} = \bigoplus_{|\mu| = n} \bS^{\mu} V_{2}^{\prime} \otimes \bS^{(\nu,-1)} V_{N-2} \langle 2N-6-2n \rangle, \quad 0\leq n \leq 2N-6,
		\end{equation*}
		where $0 \leq \mu_{2} \leq \mu_{1} \leq N-3$, and $\nu_{i} = 1$ for $i\leq N-3-\mu_{1}$, $\nu_{i} = -1$ for $i\geq N-2-\mu_{2}$, and zero otherwise. 
		
		\item Let $\pi_{N-2}: \fZ_{1,N-2} \to \TGr{N-2}{N}$ be the projection from the Hecke correspondence. 
		Then, the sheaf $\pi_{N-2,*}((V_{N-2}/V_{1})^{\vee}\otimes \det(V_{N-2}/V_{1}))$ has a resolution
		\begin{equation*}
			\cF^{-n} = \left( \bS^{n,0}V_{2}^{\prime}\otimes \extp^{N-4-n} V_{N-2} \langle -2n\rangle \right) \oplus  \bigoplus_{|\mu| = n+1} \bS^{\mu} V_{2}^{\prime} \otimes \bS^{(\nu,-1)} V_{N-2}\langle -2-2n \rangle
		\end{equation*}
		for $0\leq n \leq 2N-7$ whenever the terms are well-defined. 
		Here $1 \leq \mu_{2}\leq \mu_{1}\leq N-3$, and $\nu_{i} = 1$ for $i\leq N-3-\mu_{1}$, $\nu_{i} = -1$ for $i\geq N-2-\mu_{2}$, and zero otherwise.
		
		\item The sheaf $\pi_{N-2,*}((V_{2}^{\prime})^{\vee} \otimes \det(V_{N-2}/V_{1}))$ has a resolution
		\begin{equation*}
			\cF^{-n} = \left( \bS^{n,0}V_{2}^{\prime} \otimes  \bS^{0,-1}V_{2}^{\prime} \right) \otimes \extp^{N-3-n} V_{N-2} \langle -2n \rangle, \quad 0 \leq n \leq N-3.
		\end{equation*}
	\end{enumerate}
\end{lem}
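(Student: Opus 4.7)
My plan is to prove each of parts (1)--(4) by applying Theorem \ref{thm:torsion} to resolve the relevant pushforward as a complex on $\TGr{n}{N}$, and then evaluating each term of the resolution using the Borel--Weil--Bott (BWB) theorem on the relative Grassmannian produced by that theorem.

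For part (1), the projection $\pi_{2}:\fZ_{1,2}\to\TGr{2}{N}$, restricted to the base $\Gr(2,N)$, has the shape of Theorem \ref{thm:torsion} with $\cT=\Hom(\bC^{N}/V_{2},V_{2})$, $\cU=\Hom(\bC^{N}/V_{2},V_{1})$ and $G/P=\bP(V_{2})$. Since $V_{2}/V_{1}$ is a line bundle on $\bP(V_{2})$, one has $\extp^{\ell}((\cT/\cU)^{\vee})=(V_{2}/V_{1})^{-\ell}\otimes\extp^{\ell}\bC^{N}/V_{2}$, with the second factor pulled back from $\Gr(2,N)$. Theorem \ref{thm:torsion} then reduces everything to computing $H^{\bullet}(\bP^{1},V_{1}^{d}\otimes(V_{2}/V_{1})^{N-3-\ell})$, which by BWB produces at most an $H^{0}$ or $H^{1}$ contribution: $H^{0}=\bS^{(N-3-\ell,d)}V_{2}$ when $\ell\leq N-3-d$ and $H^{1}=\bS^{(d-1,N-2-\ell)}V_{2}$ when $\ell\geq N-1-d$, with wall-vanishing at $\ell=N-2-d$. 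Reindexing by $\ell=n$ and $\ell=n+1$ produces the two cases in the statement and accounts for the gap.

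Parts (2)--(4) all use the same relative setup for $\pi_{N-2}:\fZ_{1,N-2}\to\TGr{N-2}{N}$, with $\cT=\Hom(\bC^{N}/V_{N-2},V_{N-2})$, $\cU=\Hom(\bC^{N}/V_{N-2},V_{1})$ and $G/P=\bP(V_{N-2})$, so that $(\cT/\cU)^{\vee}=V_{2}'\otimes(V_{N-2}/V_{1})^{\vee}$. Cauchy's formula decomposes $\extp^{\ell}((\cT/\cU)^{\vee})=\bigoplus_{(a,b)}\bS^{(a,b)}V_{2}'\otimes\bS^{\nu^{T}}(V_{N-2}/V_{1})^{\vee}$ with $a+b=\ell$, $N-3\geq a\geq b\geq0$, and $\nu^{T}=(2^{b},1^{a-b},0^{N-3-a})$. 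After absorbing the $\det(V_{N-2}/V_{1})$ twist into the Schur index, each BWB concatenation weight takes the form $(\alpha_{1},\beta_{1},\ldots,\beta_{N-3})$ on $\bP(V_{N-2})$, and I will identify, for each $(a,b)$, the unique Weyl coset (if any) moving $\alpha_{1}$ to a position where the resulting weight is dominant, together with its length.

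For part (2) with $d\geq0$, this analysis produces two families of contributions distinguished by whether $\alpha_{1}=-d$ lands in the $0$-block of $\beta$ or just past it, corresponding to Weyl elements of consecutive lengths $d-1$ and $d$; reading off the output yields the two stated families. For $d=-1$ the concatenation weight is already dominant for every $(a,b)$, and converting $\bS^{\bullet}V_{N-2}^{\vee}$ back to $V_{N-2}$-notation produces the stated $\bS^{\mu}V_{2}'\otimes\bS^{(\nu,-1)}V_{N-2}$. For part (3), the extra $(V_{N-2}/V_{1})^{\vee}$ twist adds one box to $\nu^{T}-(1^{N-3})$ via Pieri, giving three candidate shifted indices; wall-vanishing kills two of them whenever $b\geq1$, leaving the separated summand $\bS^{n,0}V_{2}'\otimes\extp^{N-4-n}V_{N-2}$ (from $b=0$, adding to row $a+1$) and the $\mu$-indexed family with $\mu_{2}\geq1$ (from $b\geq1$, adding to row $1$, contributing in cohomological degree $1$ after one transposition). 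For part (4), $(V_{2}')^{\vee}$ is pulled back from the base and plays no role in BWB, so the same wall-vanishing collapses the output to the $b=0$ case; a Pieri expansion of $(V_{2}')^{\vee}\otimes\bS^{(n,0)}V_{2}'$ then splits the result into $\bS^{(n,-1)}V_{2}'\oplus\bS^{(n-1,0)}V_{2}'$, with the boundary vanishing of $\bS^{(-1,0)}V_{2}'$ encoded by $\delta_{n,0}$. The hardest step is the Weyl-group bookkeeping for (2)--(4): one must verify that for each $(a,b)$ the concatenation weight lies in at most one dominant orbit (others being killed by walls) and that the length of the corresponding element matches the predicted cohomological degree $\ell-n$, with part (3) being the most delicate because three Pieri branches have to be analysed in parallel.
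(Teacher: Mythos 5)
Your proposal is correct and follows essentially the same route as the paper: each part is handled by applying Theorem \ref{thm:torsion} to present the pushforward as a Koszul-type resolution over the relevant Grassmannian bundle ($\bP(V_{2})$ for part (1), $\bP(V_{N-2})$ for parts (2)--(4)), decomposing $\extp^{\ell}(\cT/\cU)^{\vee}$ via the Cauchy formula, and then computing each term by Borel--Weil--Bott, with the Pieri branches of the extra $(V_{N-2}/V_{1})^{\vee}$ twist driving part (3) and the projection-formula/Pieri split of $(V_2')^{\vee}\otimes\bS^{(n,0)}V_2'$ driving part (4). The only cosmetic differences are the order in which you form the concatenation weight (you put the $V_{1}$-coordinate first, the paper puts it last, so you track dual Schur functors and then convert) and the exact bookkeeping language for the three Pieri branches in part (3); the conclusions match the paper's in each case, including the identification of the two surviving branches (your $b=0$ and $b\geq1$, versus the paper's $p=1$ and $p=0$), the cohomological degree shifts, and the boundary vanishing encoded by $\delta_{n,0}$ in part (4).
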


\begin{proof}
	The direct image $\bbf^{(1)}(\bS^{d}V_{1}) = \pi_{2,*} (\bS^{d}V_{1}\otimes \det(V_{2}/V_{1})^{N-3})$ is a torsion sheaf supported on the locus where the tautological map $\bC^{N}/V_{2} \to V_{2}$ has image in $V_{1}$. 
	By applying Theorem~\ref{thm:torsion} with 
	\begin{equation*}
		\cT = \Hom(\bC^{N}/V_{2},V_{2}), \quad \cU = \Hom(\bC^{N}/V_{2},V_{1}), \quad G/P = \Gr(V_{1},V_{2})
	\end{equation*}
	relatively over $\Gr(V_{2},\bC^{N})$, 
	we can resolve the torsion sheaf by 
	\begin{equation*}
		\cF^{-n}  = \bigoplus_{\ell\geq n} H^{\ell-n} \left( \bP^{1}, \bS^{d}V_{1}\otimes \bS^{N-3-\ell} V_{2}/V_{1} \otimes \extp^{\ell} \bC^{N}/V_{2} \right).
	\end{equation*}
	By the Borel--Weil--Bott theorem, the nonvanishing cohomology groups are $H^{0}$ at each $\ell \leq N-3-d$ and $H^{1}$ at each $\ell\geq N-1-d$. 
	
	The direct image $\bbf^{(N-3)}(\bS^{d}V_{1}) = \pi_{N-2,*} (\bS^{d}V_{1}\otimes \det(V_{N-2}/V_{1}))$ is supported on the locus where the tautological map $\bC^{N}/V_{N-2} \to V_{N-2}$ has image in $V_{1}$. 
	By applying Theorem~\ref{thm:torsion} with 
	\begin{equation*}
		\cT = \Hom(V_{2}^{\prime},V_{N-2}), \quad \cU = \Hom(V_{2}^{\prime},V_{1}), \quad G/P = \Gr(V_{1},V_{N-2})
	\end{equation*}
	relatively over $\Gr(V_{N-2},\bC^{N})$, 
	the torsion sheaf is resolved by 
	\begin{equation*}
		\cF^{-n}  = \bigoplus_{\ell\geq n} H^{\ell-n} \left( \bP^{N-3}, \bS^{d}V_{1}\otimes \det(V_{N-2}/V_{1}) \otimes \extp^{\ell} \left( V_{2}^{\prime} \otimes (V_{N-2}/V_{1})^{\vee} \right) \right).
	\end{equation*}
	Here the exterior power $\extp^{\ell}$ can be decomposed
	\begin{equation*}
		\bigoplus_{\mu} \bS^{\mu} V_{2}^{\prime} \otimes \bS^{\mu^{\prime}} (V_{N-2}/V_{1})^{\vee}
	\end{equation*}
	over all Young diagrams $\mu = (\mu_{1},\mu_{2})$ where $0 \leq \mu_{2}\leq \mu_{1}\leq N-3$ and $\mu_{1} + \mu_{2} = \ell$.
	We can rewrite the tensor product $\det(V_{N-2}/V_{1})\otimes \bS^{\mu^{\prime}} (V_{N-2}/V_{1})^{\vee}$ as a Schur functor $\bS^{\nu} V_{N-2}/V_{1}$ with 
	\begin{equation}\label{eq:nu}
		\nu_{1} = \cdots = \nu_{N-3-\mu_{1}} = 1,\,\, 
		\nu_{N-2-\mu_{1}} = \cdots = \nu_{N-3-\mu_{2}} = 0, \,\,
		\nu_{N-2-\mu_{2}} = \cdots = \nu_{N-3} = -1. 
	\end{equation}
	To apply the Borel--Weil--Bott theorem on $\bP^{N-3}$, we consider the concatenation weight $(\nu, d)$. 
	When $d=-1$, the weight is already dominant, and the resolution has all possible terms 
	\begin{equation*}
		\cF^{-\ell} = \bigoplus_{\mu} \bS^{\mu} V_{2}^{\prime} \otimes \bS^{(\nu,-1)} V_{N-2}. 
	\end{equation*}
	When $d\geq 0$, we need to permute the last coordinate $d$ towards the left to obtain a dominant weight. 
	As $d\leq N-2$, this is only possible when the permutation stops at $\nu_{N-3-\mu_{2}}$ or further at $\nu_{N-3-\mu_{1}}$.
	In the former case, we should have $d - \mu_{2} = 0$, which gives rise to the terms 
	\begin{equation*}
		\cF^{-n} = H^{d} = \bS^{n,d}V_{2}^{\prime}\otimes \extp^{N-3-n} V_{N-2}, \quad d\leq n \leq N-3.
	\end{equation*}
	In the latter case, we have $d-\mu_{1} = 1$, which gives rise to the terms 
	\begin{equation*}
		\cF^{-n} = H^{d-1} = \bS^{d-1, n} V_{2}^{\prime} \otimes \extp^{N-2-n} V_{N-2}, \quad 0\leq n \leq d-1. 
	\end{equation*}
	
	By the projection formula, we can tensor each term of the above resolution $\cF^{\bullet}$ for $d=0$ with $(V_{2}^{\prime})^{\vee}$ to obtain a resolution of $\pi_{N-2,*}((V_{2}^{\prime})^{\vee} \otimes \det(V_{N-2}/V_{1}))$. 
	
	Finally, to calculate $\pi_{N-2,*}((V_{N-2}/V_{1})^{\vee}\otimes \det(V_{N-2}/V_{1}))$, we first decompose 
	\[(V_{N-2}/V_{1})^{\vee}\otimes \bS^{\nu} V_{N-2}/V_{1} = \bigoplus_{p=0,1,2} \bS^{\nu(p)} V_{N-2}/V_{1}\] 
	by using Pieri's formula.
	Here, the weights $\nu(1)$ and $\nu(2)$ are the above $\nu$ in (\ref{eq:nu}) that correspond to the weights $(\mu_{1}+1,\mu_{2})$ and $(\mu_{1},\mu_{2}+1)$  respectively, and $\nu(0)$ is obtained from $\nu$ by subtracting the last coordinate $-1$ (whenever $\mu_{2} \neq 0$) by $1$. 
	By a similar argument as above, the concatenation $(\nu(p),0)$ can be permuted into a dominant weight only when $p=1$ or $p=0$. 
	In the former case, we have nothing new but a shift in the degree of the exterior power in the above $d=0$ case: we have the term $\bS^{n,0}V_{2}^{\prime}\otimes \extp^{N-4-n} V_{N-2}$ contributing to $\cF^{-n}$ for each $0\leq n \leq N-4$. 
	In the latter case, the new concatenation $(\nu(0),0) = (\cdots, -2,0)$ gives rise to terms of the form 
	\[\bS^{\mu}V_{2}^{\prime} \otimes \bS^{(\nu_{1},\cdots, \nu_{N-4}, -1,-1)} V_{N-2},\]
	which also contribute to $\cF^{-n}$ where $n+1 = \ell = |\mu|$. 
	
	Again, computing the internal degree shift is a bookkeeping. 
\end{proof}

On the Hecke correspondence $\fZ_{2,N-2}$, there is a short exact sequence of tautological vector bundles
\begin{equation*}
	0 \longrightarrow V_{N-2}/V_{2} \longrightarrow \bC^{N}/V_{2} \longrightarrow V_{2}^{\prime} \longrightarrow 0. 
\end{equation*}
So, any exterior power $\extp^{d} \bC^{N}/V_{2}$ has a natural filtration with associated graded pieces \cite[\S 2.3]{Wey}
\begin{equation}\label{eq:filtration}
	\bigoplus_{j} F^{j/j-1}, \quad  F^{j/j-1} = \extp^{d-j} V_{N-2}/V_{2} \otimes \extp^{j} V_{2}^{\prime}. 
\end{equation}
This also realizes $\extp^{d} \bC^{N}/V_{2}$ as the convolution of a complex \cite[p.262]{GM}
\begin{equation*}
	F^{2/1}[-2] \longrightarrow  F^{1/0}[-1] \longrightarrow F^{0}, \quad 2 \leq d \leq N-4;
\end{equation*}
the cone of a morphism $F^{1/0}[-1] \to F^{0}$ when $d=1$; or the cone of $F^{2/1}[-1] \to F^{1/0}$ when $d=N-3$.

\begin{lem}\phantomsection\label{lem:fil}
	\begin{enumerate}[wide, labelindent=0pt, label = (\arabic*)]
		\item The image $\bbf^{(N-4)}(\cE_{\la})$ is $\bS^{\la}V_{2}^{\prime}\langle 2(N-4-\la_{1} -\la_{2})\rangle$ when $\la_{2}\neq -1$. 
		Otherwise, it has a resolution
		\begin{equation*}
			\cF^{-n} = \begin{cases}
				\bS^{\la_{1},n} V_{2}^{\prime} \otimes \extp^{n+1} V_{N-2}^{\vee}\langle 2(N-4-\la_{1}-n)\rangle, & 0 \leq n \leq \la_{1},\\
				\bS^{n,\la_{1}+1}V_{2}^{\prime} \otimes \extp^{n+2} V_{N-2}^{\vee}\langle 2(N-5-\la_{1}-n)\rangle,  & \la_{1}+1\leq n \leq N-4. 
			\end{cases}
		\end{equation*}
		
		\item The sheaf $\bbf^{(N-4)}(\cV_{-1,0})$ has a resolution
		\begin{equation*}
			\cF^{-n} = \bigoplus_{|\mu| = n+2} \bS^{\mu} V_{2}^{\prime} \otimes \bS^{(\nu,-1)} V_{N-2}\langle N-7-2n \rangle, \quad 0\leq n \leq 2N-8,
		\end{equation*}
		where $1 \leq \mu_{2} \leq \mu_{1} \leq N-3$ and $\nu_{i} = 1$ for $i\leq N-3-\mu_{1}$, $\nu_{i} = -1$ for $i\geq N-2-\mu_{2}$, and zero otherwise. 
		
		\item Let $\pi_{N-2}: \fZ_{2,N-2} \to \TGr{N-2}{N}$ be the projection from the Hecke correspondence and consider 
		\begin{equation*}
			F^{j/j-1} \cV_{k,i} := \det(V_{2})^{k+i}  \otimes \extp^{N-3-k-2i-j} V_{N-2}/V_{2} \otimes \extp^{j} V_{2}^{\prime}\langle -(N-2-k-2i) \rangle
		\end{equation*}
		for $ j=0,1,2$ whenever it is well-defined. 
		Then, except for $F^{2/1}\cV_{-1,0} = \cV_{-1,0}$, the sheaf $\pi_{N-2,*}(F^{j/j-1}\cV_{k,i})$ has a resolution 
		\begin{equation*}
			\cF^{-n} = \bS^{k+i-1,k+i-1+n} V_{2}^{\prime}\otimes \extp^{j}V_{2}^{\prime}\otimes \extp^{N-1-k-2i-j-n} V_{N-2} \langle 6-N-3k-2i-2n \rangle
		\end{equation*}
		for $1-k-2i-j\leq n \leq 0$ if $i+j\leq 0$, and otherwise if $i+j \geq 1$, it has a resolution    
		\begin{equation*}
			\cF^{-n} = \bS^{k+i+n, k+i} V_{2}^{\prime} \otimes \extp^{j}V_{2}^{\prime}\otimes  \extp^{N-3-k-2i-j-n} V_{N-2} \langle 2-N-3k-2i-2n \rangle
		\end{equation*} 
		for $0\leq n \leq N-3-k-2i-j$. 
	\end{enumerate}
\end{lem}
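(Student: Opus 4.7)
The plan is to apply Theorem~\ref{thm:torsion} to each direct image $\pi_{N-2, *}$ from the Hecke correspondence $\fZ_{2, N-2}$, and then to evaluate the resulting fiber cohomology via the Borel--Weil--Bott theorem on the relative Grassmannian bundle $\Gr(V_{2}, V_{N-2}) \to \Gr(V_{N-2}, \bC^{N})$. With
\[
\cT = \Hom(V_{2}^{\prime}, V_{N-2}), \quad \cU = \Hom(V_{2}^{\prime}, V_{2}), \quad G/P = \Gr(V_{2}, V_{N-2}),
\]
the pushforward is resolved by $\cF^{-n} = \bigoplus_{\ell \ge n} H^{\ell - n}\bigl(\cG \otimes \extp^{\ell}\Hom(V_{N-2}/V_{2}, V_{2}^{\prime})\bigr)$, where $\cG$ is the input sheaf. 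In each part I would expand the exterior power via the Cauchy identity $\extp^{\ell}(A \otimes B) = \bigoplus_{|\tau| = \ell} \bS^{\tau} A \otimes \bS^{\tau^{\prime}} B$ (with $\tau_{i} \le 2$), pull $\bS^{\tau^{\prime}} V_{2}^{\prime}$ out as a pullback factor, and reduce to BWB on Schur functors in $V_{2}$ and $V_{N-2}/V_{2}$ on the fiber. The concatenation weight to analyze is $\sigma = (\alpha_{1}, \alpha_{2}, \tau_{1}, \ldots, \tau_{N-4})$, where $(\alpha_{1}, \alpha_{2})$ records the twist contribution from $\cG$.

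For part (1), $(\alpha_{1}, \alpha_{2}) = (-\la_{2}, -\la_{1})$. When $\la_{2} \ge 0$, regularity of $\sigma + \rho$ with $\rho = (N-3, \ldots, 0)$ pins down the unique partition $\tau = (2^{\la_{2}}, 1^{\la_{1}-\la_{2}}, 0^{N-4-\la_{1}})$: the entries $-\la_{2}$ and $-\la_{1}$ (shifted by $\rho$) must fill the two gaps left in the tau-block, and a direct inversion count gives $\ell(w) = \la_{1} + \la_{2} = |\tau|$, placing the contribution $\bS^{\la} V_{2}^{\prime}$ exactly at $n = 0$. When $\la_{2} = -1$, the prefix $(1, -\la_{1})$ makes $\sigma + \rho$ richer, and a split according to whether $N-4-\la_{1}$ fills the gap $N-3-a$ or the gap $N-4-a-b$ left by the tau-block yields the ranges $\la_{1}+1 \le n \le N-4$ and $0 \le n \le \la_{1}$ respectively; the corresponding inversion counts $\la_{1}+1$ and $\la_{1}$ match the claimed cohomological shifts.

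For part (2) the twist by $\det(V_{2})^{-1} \otimes \det(\bC^{N}/V_{2})$ shifts the $(V_{N-2}/V_{2})$-weights by $(1^{N-4})$, producing a concatenation $(1, 1, 1^{a}, 0^{b}, (-1)^{N-4-a-b})$ that is already strictly dominant; every $\tau$ contributes via $H^{0}$, and the identification $\det V_{2}^{\prime} \otimes \bS^{\tau^{\prime}} V_{2}^{\prime} = \bS^{(a+b+1, a+1)} V_{2}^{\prime}$ together with the reindexing $\mu = (a+b+1, a+1)$ reproduces the claimed sum. For part (3) the $\extp^{j} V_{2}^{\prime}$ pulls out directly, and the combination $\extp^{N-3-k-2i-j}(V_{N-2}/V_{2}) \otimes \bS^{\tau}(V_{N-2}/V_{2})^{\vee}$ is simplified via Pieri; the sign split $i + j \le 0$ versus $i + j \ge 1$ arises from the Pieri expansion and leads to the two symmetric resolutions in the statement. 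The exceptional identification $F^{2/1} \cV_{-1, 0} = \cV_{-1, 0}$ follows directly from $\extp^{N-4}(V_{N-2}/V_{2}) = \det(V_{N-2}/V_{2})$ and $\extp^{2} V_{2}^{\prime} = \det V_{2}^{\prime}$, which together reconstruct $\det(\bC^{N}/V_{2})$.

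The main obstacle will be the case-by-case bookkeeping: for every pair $(\cG, \tau)$ one must verify that $\sigma + \rho$ is regular and compute its sorting permutation, while discarding the remaining $\tau$'s via a repeated-coordinate argument. The key organizing fact, common to all three parts, is that $\sigma + \rho$ decomposes as a ``tau-block'' plus the two leading entries from the twist, and regularity is equivalent to those two leading entries exactly filling the two gaps in the tau-block; the number of pairs $(a, b)$ satisfying this gap-filling equation controls whether the resolution is a single term (part (1) for $\la_{2} \ge 0$), a two-range stratification (part (1) for $\la_{2} = -1$), or a many-term sum (parts (2) and (3)).
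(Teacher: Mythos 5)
Your proposal follows the paper's proof almost step for step: you apply Theorem~\ref{thm:torsion} with the same choice of $\cT = \Hom(V_{2}^{\prime},V_{N-2})$, $\cU = \Hom(V_{2}^{\prime},V_{2})$, $G/P = \Gr(V_{2},V_{N-2})$ relatively over $\Gr(V_{N-2},\bC^{N})$, expand $\extp^{\ell}(T/U)^{\vee}$ by Cauchy, pull the $V_{2}^{\prime}$-factor out of the fiber, and finish by Borel--Weil--Bott; your ``gap-filling'' criterion for regularity of $\sigma+\rho$ is exactly the case analysis the paper carries out. The only differences are cosmetic bookkeeping conventions---you keep the $(V_{N-2}/V_{2})$-weight in dual form and write the concatenation subbundle-first, whereas the paper dualizes to $\bS^{\nu}V_{N-2}/V_{2}$ and concatenates quotient-first---and part~(3) is sketched at a higher level (the split $i+j\le 0$ vs.\ $i+j\ge 1$ is best seen as coming from whether the $\det(V_{2})^{k+i}$-entries can permute to fill a ``$1$-gap'' or a ``$0$-gap'' in the $\tau$-block, which in the paper's normalization is the dichotomy $d_{1}+d_{2}\ge N-3$ vs.\ $\le N-4$, rather than from the Pieri expansion alone), but the underlying argument is the same.
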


\begin{proof}
	We first calculate the direct image $\pi_{N-2,*}(\cE_{\la})$. 
	It is a torsion sheaf supported on the locus where the tautological map $\bC^{N}/V_{N-2} \to V_{N-2}$ has image in $V_{2}$. 
	By applying Theorem~\ref{thm:torsion} with 
	\begin{equation*}
		\cT= \Hom(V_{2}^{\prime},V_{N-2}), \quad \cU=\Hom(V_{2}^{\prime},V_{2}), \quad G/P = \Gr(V_{2},V_{N-2})
	\end{equation*}
	relatively over $\Gr(V_{N-2},\bC^{N})$, 
	the torsion sheaf can be resolved by 
	\begin{equation*}
		\cF^{-n} = \bigoplus_{\ell\geq n} H^{\ell - n} \left( \Gr(2,N-2),\bS^{\la}V_{2} \otimes \extp^{\ell} \left(V_{2}^{\prime}\otimes ( V_{N-2}/V_{2})^{\vee}\right) \right). 
	\end{equation*}
	The exterior power $\extp^{\ell}$ is decomposed into 
	\begin{equation*}
		\bigoplus_{\mu} \bS^{\mu} V_{2}^{\prime} \otimes \bS^{\nu} V_{N-2}/V_{2},
	\end{equation*}
	where $0 \leq \mu_{2}\leq \mu_{1}\leq N-4$ and $\mu_{1} + \mu_{2} = \ell$, and the coordinates of $\nu = - \mu^{\prime}$ satisfy
	\begin{equation}\label{eq:nu2}
		\nu_{1} = \cdots = \nu_{N-4-\mu_{1}} = 0,\,\,
		\nu_{N-3-\mu_{1}} = \cdots = \nu_{N-4-\mu_{2}} = -1,\,\,
		\nu_{N-3-\mu_{2}} = \cdots = \nu_{N-4} = -2. 
	\end{equation}
	Now we consider the concatenation weight
	\begin{equation*}
		(\underbrace{0,\cdots,0}_{N-4-\mu_{1}}, \underbrace{-1,\cdots,-1}_{\mu_{1}-\mu_{2}}, \underbrace{-2,\cdots,-2}_{\mu_{2}},\la_{1},\la_{2}). 
	\end{equation*}
	When $\la_{2}\geq 0$, to obtain a dominant weight, we need to permute both $\la_{1}$ and $\la_{2}$ towards the left. 
	The permutation must stop at zero because $0 \leq \la_{1},\la_{2} \leq N-4$.  
	This can only happen when $\la_{2}=\mu_{2}$ and $\la_{1} = \mu_{1}$, and it contributes to the only nonvanishing term $\cF^{0} = H^{\la_{1}+\la_{2}}  = \bS^{\la} V_{2}^{\prime}$. 
	On the other hand, if $\la_{2}=-1$, we only need to permute $\la_{1}$. 
	There are two situations: the permutation stops at $-1$ (resp.\ $0$) and has length $\mu_{2}$ (resp.\ $\mu_{1}$). 
	In the former case, we have $\la_{1}-\mu_{2} = -1$, which gives rives to the terms
	\begin{equation*}
		\cF^{-n} = H^{\la_{1}+1} = \bS^{n,\la_{1}+1}V_{2}^{\prime} \otimes \extp^{n+2} V_{N-2}^{\vee}, \quad \la_{1}+1\leq n \leq N-4. 
	\end{equation*}
	In the latter case, we have $\la_{1}-\mu_{1} = 0$, which gives rise to the terms 
	\begin{equation*}
		\cF^{-n} = H^{\la_{1}} = \bS^{\la_{1},n} V_{2}^{\prime} \otimes \extp^{n+1} V_{N-2}^{\vee}, \quad 0 \leq n \leq \la_{1}. 
	\end{equation*}
	
	All other direct images in the lemma are  of the form 
	\[
	\pi_{N-2,*}\left( \det(V_{2})^{d_{1}} \otimes \extp^{d_{2}} V_{N-2}/V_{2}\right). 
	\]
	By Pieri's formula, we have a decomposition 
	\[
	\bS^{\nu}V_{N-2}/V_{2} \otimes \extp^{d_{2}} V_{N-2}/V_{2} \cong \bigoplus_{a+b+c = d_{2}} \bS^{\nu(a,b,c)} V_{N-2}/V_{2},
	\]
	where $\nu(a,b,c)$ is obtained from $\nu$ by raising respectively the first $a$, $b$ and $c$ coordinates in the three equations in (\ref{eq:nu2}) by $1$.
	That is to say, the concatenation weight $(\nu(a,b,c),d_{1},d_{1})$ is of the form
	\begin{equation}\label{eq:conwt}
		(\underbrace{1,\cdots,1}_{a}, \underbrace{0, \cdot\dots\cdot\dots\cdot,0}_{N-4-\mu_{1}-a+b}, \underbrace{-1,\cdot\dots\cdot,-1}_{\mu_{1}-\mu_{2}-b+c}, \underbrace{-2,\cdots,-2}_{\mu_{2}-c}, d_{1},d_{1}). 
	\end{equation}
	
	For $\cV_{-1,0}$, we take $d_{1}=-1$ and $d_{2} = N-4$, and the weight (\ref{eq:conwt}) is always dominant.
	Take into account the factor $\extp^{2}V_{2}^{\prime}$ of $\cV_{-1,0} = F^{2/1}\cV_{-1,0}$, we obtain a resolution 
	\begin{equation*}
		\cF^{-n} = \bigoplus_{|\mu| = n} \bS^{(\mu_{1}+1,\mu_{2}+1)} V_{2}^{\prime} \otimes \bS^{(\nu,-1,-1)} V_{N-2}, \quad 0\leq n \leq 2N-8,
	\end{equation*}
	where $0 \leq \mu_{2} \leq \mu_{1} \leq N-4$ and $\nu_{i} = 1$ for $i\leq N-4-\mu_{1}$, $\nu_{i} = -1$ for $i\geq N-3-\mu_{2}$, and zero otherwise.
	By reordering the index, we obtain the asserted formula. 
	
	For all other $\cV_{k,i}$, we have $0\leq d_{1} \leq N-3$, so we assume this condition in the following. 
	The only possible way to obtain a dominant weight is to permute the last two coordinates $d_{1}$ in (\ref{eq:conwt}) to the left until they are reduced to $1$ or $0$.
	In the former case, we must have $N-4-\mu_{1}-a=0, b = 0 $ and $d_{1}-(\mu_{1}-b)=1$. 
	This implies $\mu_{1}=d_{1}-1$ and $d_{1}+d_{2} = N-3+c \geq N-3$, and the corresponding cohomology is 
	\[
	H^{2(d_{1}-1)} = \extp^{d_{1}+d_{2}+1-\mu_{2}} V_{N-2}.
	\]
	In the latter case, it is necessary to have $\mu_{1}-\mu_{2}-b = 0, c=0$ and $d_{1}-(\mu_{2}-c) = 0$, which implies $\mu_{2}=d_{1}$ and $d_{1}+d_{2} = \mu_{1}+a \leq N-4$.
	The corresponding cohomology in this situation is 
	\[
	H^{2d_{1}} = \extp^{d_{1}+d_{2}-\mu_{1}}V_{N-2}. 
	\]
	In summary, when $d_{1}+d_{2} \geq N-3$, we obtain a resolution 
	\[
	\cF^{-n} = \bS^{d_{1}-1,d_{1}-1+n}V_{2}^{\prime}\otimes \extp^{d_{2}+2-n} V_{N-2},
	\] 
	and when $d_{1}+d_{2} \leq N-4$, we have 
	\[ \cF^{-n} = \bS^{d_{1}+n, d_{1}} V_{2}^{\prime} \otimes   \extp^{d_{2}-n} V_{N-2}.\]
	In both cases, the degree $n\in\mathbb{Z}$ takes all possible values. 
	The formula in the lemma is then obtained by simply taking $d_{1} = k+i$ and $d_{2} = N-3-k-2i-j$. 
	
	As before, obtaining the internal degree shift is a bookkeeping. 
\end{proof}

\printbibliography

@Article{Kap2,
	Author = {Kapranov, Mikhail},
	Title = {On the derived categories of coherent sheaves on some homogeneous spaces},
	FJournal = {Inventiones Mathematicae},
	Journal = {Invent. Math.},
	ISSN = {0020-9910},
	Volume = {92},
	Number = {3},
	Pages = {479--508},
	Year = {1988},
	Language = {English},
	DOI = {10.1007/BF01393744},
	Keywords = {18E30,14M17,14F05,14F20,18G15},
	URL = {https://eudml.org/doc/143579},
	zbMATH = {4061472},
	Zbl = {0651.18008}
}

@Book{quantum,
Author = {Lusztig, George},
Title = {Introduction to quantum groups},
Edition = {Reprint of the 1994 ed.},
FSeries = {Modern Birkh{\"a}user Classics},
Series = {Mod. Birkh{\"a}user Class.},
Year = {2010},
Publisher = {Boston, MA: Birkh{\"a}user},
Language = {English},
DOI = {10.1007/978-0-8176-4717-9},
Keywords = {17B37,17-02,16T05,16T20,20G05,81R50},
zbMATH = {5288789},
Zbl = {1246.17018}
}

@Article{L1,
	Author = {Lusztig, George},
	Title = {Bases in equivariant {{\(K\)}}-theory},
	FJournal = {Representation Theory},
	Journal = {Represent. Theory},
	ISSN = {1088-4165},
	Volume = {2},
	Pages = {298--369},
	Year = {1998},
	Language = {English},
	DOI = {10.1090/S1088-4165-98-00054-5},
	Keywords = {20G20,14M15,20G05,18F25,14M30,20C08,19L47},
	zbMATH = {1201815},
	Zbl = {0901.20034}
}

@Article{L2,
	Author = {Lusztig, George},
	Title = {Bases in equivariant {{\(K\)}}-theory. {II}},
	FJournal = {Representation Theory},
	Journal = {Represent. Theory},
	ISSN = {1088-4165},
	Volume = {3},
	Pages = {281--353},
	Year = {1999},
	Language = {English},
	DOI = {10.1090/S1088-4165-99-00083-7},
	Keywords = {20G05,19L47,20C08},
	zbMATH = {1460684},
	Zbl = {0999.20036}
}

@Article{LMN-L,
	Author = {Lusztig, George},
	Title = {Quiver varieties and {Weyl} group actions},
	FJournal = {Annales de l'Institut Fourier},
	Journal = {Ann. Inst. Fourier},
	ISSN = {0373-0956},
	Volume = {50},
	Number = {2},
	Pages = {461--489},
	Year = {2000},
	Language = {English},
	DOI = {10.5802/aif.1762},
	Keywords = {20G10,20G05,14L30},
	URL = {https://eudml.org/doc/75426},
	zbMATH = {1448497},
	Zbl = {0958.20036}
}

@Article{LMN-N,
	Author = {Nakajima, Hiraku},
	Title = {Reflection functors for quiver varieties and {Weyl} group actions.},
	FJournal = {Mathematische Annalen},
	Journal = {Math. Ann.},
	ISSN = {0025-5831},
	Volume = {327},
	Number = {4},
	Pages = {671--721},
	Year = {2003},
	Language = {English},
	DOI = {10.1007/s00208-003-0467-0},
	Keywords = {16G20,53C26,14D21,20F55,33D80},
	zbMATH = {2078202},
	Zbl = {1060.16017}
}

@Article{LMN-M,
	Author = {Maffei, Andrea},
	Title = {A remark on quiver varieties and {Weyl} groups.},
	FJournal = {Annali della Scuola Normale Superiore di Pisa. Classe di Scienze. Serie V},
	Journal = {Ann. Sc. Norm. Super. Pisa, Cl. Sci. (5)},
	ISSN = {0391-173X},
	Volume = {1},
	Number = {3},
	Pages = {649--686},
	Year = {2002},
	Language = {English},
	Keywords = {14L30,16G20},
	URL = {https://eudml.org/doc/84483},
	zbMATH = {2217018},
	Zbl = {1143.14309}
}

@Article{VV03,
	Author = {Varagnolo, Michela and Vasserot, Eric},
	Title = {Canonical bases and quiver varieties},
	FJournal = {Representation Theory},
	Journal = {Represent. Theory},
	ISSN = {1088-4165},
	Volume = {7},
	Pages = {227--258},
	Year = {2003},
	Language = {English},
	Keywords = {17B37,16E20},
	zbMATH = {2115421},
	Zbl = {1055.17007}
}

@Article{Lusztig,
	Author = {Lusztig, G.},
	Title = {Remarks on quiver varieties},
	FJournal = {Duke Mathematical Journal},
	Journal = {Duke Math. J.},
	ISSN = {0012-7094},
	Volume = {105},
	Number = {2},
	Pages = {239--265},
	Year = {2000},
	Language = {English},
	DOI = {10.1215/S0012-7094-00-10523-6},
	Keywords = {20G42,16G20,17B37,14M15,20G05,19L47},
	zbMATH = {1820762},
	Zbl = {1017.20040}
}

@Article{KR20,
	Author = {Keller, Bernhard and Krause, Henning},
	Title = {Tilting preserves finite global dimension},
	FJournal = {Comptes Rendus. Math{\'e}matique. Acad{\'e}mie des Sciences, Paris},
	Journal = {C. R., Math., Acad. Sci. Paris},
	ISSN = {1631-073X},
	Volume = {358},
	Number = {5},
	Pages = {563--570},
	Year = {2020},
	Language = {English},
	DOI = {10.5802/crmath.72},
	Keywords = {18G80,18G20},
	zbMATH = {7267916},
	Zbl = {1451.18032}
}

@Article{BVdB,
 Author = {Bondal, A. and Van den Bergh, M.},
 Title = {Generators and representability of functors in commutative and noncommutative geometry},
 FJournal = {Moscow Mathematical Journal},
 Journal = {Mosc. Math. J.},
 ISSN = {1609-3321},
 Volume = {3},
 Number = {1},
 Pages = {1--36},
 Year = {2003},
 Language = {English},
 Keywords = {18E30,14F05},
 zbMATH = {2069670},
 Zbl = {1135.18302}
}

@Article{CK1,
 Author = {Cautis, Sabin and Kamnitzer, Joel},
 Title = {Braiding via geometric {Lie} algebra actions},
 FJournal = {Compositio Mathematica},
 Journal = {Compos. Math.},
 ISSN = {0010-437X},
 Volume = {148},
 Number = {2},
 Pages = {464--506},
 Year = {2012},
 Language = {English},
 DOI = {10.1112/S0010437X1100724X},
 Keywords = {14F05,14M15,18F99},
 zbMATH = {6028614},
 Zbl = {1249.14005}
}

@Article{Bei,
 Author = {Beilinson, A.},
 Title = {Coherent sheaves on {{\({\mathbb{P}}^n\)}} and problems of linear algebra},
 FJournal = {Functional Analysis and its Applications},
 Journal = {Funct. Anal. Appl.},
 ISSN = {0016-2663},
 Volume = {12},
 Pages = {214--216},
 Year = {1979},
 Language = {English},
 DOI = {10.1007/BF01681436},
 Keywords = {14F05,15A75,18E30},
 zbMATH = {3659709},
 Zbl = {0424.14003}
}

@Book{3264,
 Author = {Eisenbud, David and Harris, Joe},
 Title = {3264 and all that. {A} second course in algebraic geometry},
 Year = {2016},
 Publisher = {Cambridge: Cambridge University Press},
 Language = {English},
 DOI = {10.1017/CBO9781139062046},
 Keywords = {14-02,14N10,14C17},
 zbMATH = {6562439},
 Zbl = {1341.14001}
}

@Article{Kaneda,
 Author = {Kaneda, Masaharu},
 Title = {Another strongly exceptional collection of coherent sheaves on a {Grassmannian}},
 FJournal = {Journal of Algebra},
 Journal = {J. Algebra},
 ISSN = {0021-8693},
 Volume = {473},
 Pages = {352--373},
 Year = {2017},
 Language = {English},
 DOI = {10.1016/j.jalgebra.2016.10.043},
 Keywords = {14F05,14M15},
 zbMATH = {6671130},
 Zbl = {1368.14030}
}

@Article{RSVdB,
 Author = {Raedschelders, Theo and {\v{S}}penko, {\v{S}}pela and Van den Bergh, Michel},
 Title = {The {Frobenius} morphism in invariant theory},
 FJournal = {Advances in Mathematics},
 Journal = {Adv. Math.},
 ISSN = {0001-8708},
 Volume = {348},
 Pages = {183--254},
 Year = {2019},
 Language = {English},
 DOI = {10.1016/j.aim.2019.03.013},
 Keywords = {14M15,13A50,13A35,14G17,32S45,14A22},
 zbMATH = {7055759},
 Zbl = {1430.14098}
}

@Article{Kr,
 Author = {Krause, Henning},
 Title = {Koszul, {Ringel} and {Serre} duality for strict polynomial functors.},
 FJournal = {Compositio Mathematica},
 Journal = {Compos. Math.},
 ISSN = {0010-437X},
 Volume = {149},
 Number = {6},
 Pages = {996--1018},
 Year = {2013},
 Language = {English},
 DOI = {10.1112/S0010437X12000814},
 Keywords = {20G05,18D10,18E30,20G10,20G43,18G10},
 zbMATH = {6200374},
 Zbl = {1293.20046}
}

@Article{Hara-Abuaf,
 Author = {Hara, Wahei},
 Title = {On the {Abuaf}-{Ueda} flop via non-commutative crepant resolutions},
 Journal = {SIGMA, Symmetry Integrability Geom. Methods Appl.},
 ISSN = {1815-0659},
 Volume = {17},
 Pages = {paper 044, 22},
 Year = {2021},
 Language = {English},
 DOI = {10.3842/SIGMA.2021.044},
 Keywords = {14F08,14E15,14A22},
 zbMATH = {7342622},
 Zbl = {1469.14036}
}

@Article{Nak94,
 Author = {Nakajima, Hiraku},
 Title = {Instantons on {ALE} spaces, quiver varieties, and {Kac}-{Moody} algebras},
 FJournal = {Duke Mathematical Journal},
 Journal = {Duke Math. J.},
 ISSN = {0012-7094},
 Volume = {76},
 Number = {2},
 Pages = {365--416},
 Year = {1994},
 Language = {English},
 DOI = {10.1215/S0012-7094-94-07613-8},
 Keywords = {17B67,53C55,16G20,53C15},
 zbMATH = {727829},
 Zbl = {0826.17026}
}

@Article{BGS,
 Author = {Beilinson, Alexander and Ginzburg, Victor and Soergel, Wolfgang},
 Title = {Koszul duality patterns in representation theory},
 FJournal = {Journal of the American Mathematical Society},
 Journal = {J. Am. Math. Soc.},
 ISSN = {0894-0347},
 Volume = {9},
 Number = {2},
 Pages = {473--527},
 Year = {1996},
 Language = {English},
 DOI = {10.1090/S0894-0347-96-00192-0},
 Keywords = {17B10,17B35,14M15,22E46},
 zbMATH = {868359},
 Zbl = {0864.17006}
}

@Article{TU,
 Author = {Toda, Yukinobu and Uehara, Hokuto},
 Title = {Tilting generators via ample line bundles},
 FJournal = {Advances in Mathematics},
 Journal = {Adv. Math.},
 ISSN = {0001-8708},
 Volume = {223},
 Number = {1},
 Pages = {1--29},
 Year = {2010},
 Language = {English},
 DOI = {10.1016/j.aim.2009.07.006},
 Keywords = {18E30,14A22},
 zbMATH = {5643955},
 Zbl = {1187.18009}
}

@article{Web24,
      title={Tilting generator for the {$T^*Gr(2,4)$} {C}oulomb Branch}, 
      author={Aiden Suter and Ben Webster},
      year={2024},
      eprint={2409.01379},
      archivePrefix={arXiv},
      url={https://arxiv.org/abs/2409.01379}
}

@Book{Huy,
 Author = {Huybrechts, D.},
 Title = {Fourier-{Mukai} transforms in algebraic geometry},
 FSeries = {Oxford Mathematical Monographs},
 Series = {Oxford Math. Monogr.},
 ISBN = {0-19-929686-3},
 Year = {2006},
 Publisher = {Oxford: Clarendon Press},
 Language = {English},
 Keywords = {14-02,14D20,14F05,18E30},
 zbMATH = {5028970},
 Zbl = {1095.14002}
}

@Article{BM,
 Author = {Bezrukavnikov, Roman and Mirkovi{\'c}, Ivan},
 Title = {Representations of semisimple {Lie} algebras in prime characteristic and the noncommutative {Springer} resolution},
 FJournal = {Annals of Mathematics. Second Series},
 Journal = {Ann. Math. (2)},
 ISSN = {0003-486X},
 Volume = {178},
 Number = {3},
 Pages = {835--919},
 Year = {2013},
 Language = {English},
 DOI = {10.4007/annals.2013.178.3.2},
 Keywords = {17B50,17B10,14F05,14L35},
 zbMATH = {6220725},
 Zbl = {1293.17021}
}

@Article{symduality,
 Author = {Kamnitzer, Joel},
 Title = {Symplectic resolutions, symplectic duality, and {Coulomb} branches},
 FJournal = {Bulletin of the London Mathematical Society},
 Journal = {Bull. Lond. Math. Soc.},
 ISSN = {0024-6093},
 Volume = {54},
 Number = {5},
 Pages = {1515--1551},
 Year = {2022},
 Language = {English},
 DOI = {10.1112/blms.12711},
 Keywords = {16G99,17B20},
 zbMATH = {7738371},
 Zbl = {1531.16013}
}

@Article{Kaledin,
 Author = {Kaledin, Dmitry},
 Title = {Derived equivalences by quantization},
 FJournal = {Geometric and Functional Analysis. GAFA},
 Journal = {Geom. Funct. Anal.},
 ISSN = {1016-443X},
 Volume = {17},
 Number = {6},
 Pages = {1968--2004},
 Year = {2008},
 Language = {English},
 DOI = {10.1007/s00039-007-0623-x},
 Keywords = {14E15,53D17,53D55,18E30,13A35},
 zbMATH = {5275294},
 Zbl = {1149.14009}
}

@article{Hikita,
      title={Elliptic canonical bases for toric hyper-{K}{\"a}hler manifolds}, 
      author={Tatsuyuki Hikita},
      year={2020},
      eprint={2003.03573},
      archivePrefix={arXiv}
}

@Article{CK,
 Author = {Cautis, Sabin and Kamnitzer, Joel},
 Title = {Knot homology via derived categories of coherent sheaves. {I}: {The} {{\(\mathfrak{sl}(2)\)}}-case},
 FJournal = {Duke Mathematical Journal},
 Journal = {Duke Math. J.},
 ISSN = {0012-7094},
 Volume = {142},
 Number = {3},
 Pages = {511--588},
 Year = {2008},
 Language = {English},
 DOI = {10.1215/00127094-2008-012},
 Keywords = {14F05,57M27,18D05,18F30,55N22,18E30,14M15},
 zbMATH = {5278251},
 Zbl = {1145.14016}
}

@book{GM,
    AUTHOR = {Gelfand, Sergei I. and Manin, Yuri I.},
     TITLE = {Methods of homological algebra},
    SERIES = {Springer Monographs in Mathematics},
   EDITION = {2nd ed.},
 PUBLISHER = {Springer-Verlag, Berlin},
      YEAR = {2003},
     PAGES = {xx+372},
      ISBN = {3-540-43583-2},
   MRCLASS = {18-02 (18Exx 18Gxx 55U35)},
  MRNUMBER = {1950475},
       DOI = {10.1007/978-3-662-12492-5},
       URL = {https://doi.org/10.1007/978-3-662-12492-5},
}

@InCollection{Kawa,
 Author = {Kawamata, Yujiro},
 Title = {Derived equivalence for stratified {Mukai} flop on {{\(G(2,4)\)}}},
 BookTitle = {Mirror symmetry V. Proceedings of the BIRS workshop on Calabi-Yau varieties and mirror symmetry, December 6--11, 2003},
 ISBN = {0-8218-4251-X},
 Pages = {285--294},
 Year = {2006},
 Publisher = {Providence, RI: American Mathematical Society (AMS); Somerville, MA: International Press},
 Language = {English},
 Keywords = {14E30,14B05,14C20,14F08,14E05},
 zbMATH = {5153039},
 Zbl = {1137.14305}
}

@Article{Kap,
 Author = {Kapranov, Mikhail},
 Title = {On the derived category of coherent sheaves on {Grassmann} manifolds},
 FJournal = {Mathematics of the USSR. Izvestiya},
 Journal = {Math. USSR, Izv.},
 ISSN = {0025-5726},
 Volume = {24},
 Pages = {183--192},
 Year = {1985},
 Language = {English},
 DOI = {10.1070/IM1985v024n01ABEH001221},
 Keywords = {14M15,18E30,15A75,14F05,18F20,32L05},
 zbMATH = {3899068},
 Zbl = {0564.14023}
}

@Article{Web,
      title={Coherent sheaves and quantum {Coulomb} branches {I}: tilting bundles from integrable systems}, 
      author={Ben Webster},
      year={2019},
      eprint={1905.04623},
      archivePrefix={arXiv}
}

@article{Web2,
      title={Coherent sheaves and quantum {Coulomb} branches {II}: quiver gauge theories and knot homology}, 
      author={Ben Webster},
      year={2022},
      eprint={2211.02099},
      archivePrefix={arXiv}
}

@Article{Hara,
 Author = {Hara, Wahei},
 Title = {Non-commutative crepant resolution of minimal nilpotent orbit closures of type {A} and {Mukai} flops},
 FJournal = {Advances in Mathematics},
 Journal = {Adv. Math.},
 ISSN = {0001-8708},
 Volume = {318},
 Pages = {355--410},
 Year = {2017},
 Language = {English},
 DOI = {10.1016/j.aim.2017.08.010},
 Keywords = {14A22,14B05,14E16,14F05},
 zbMATH = {6769057},
 Zbl = {1387.14016}
}

@Article{DS,
 Author = {Donovan, Will and Segal, Ed},
 Title = {Window shifts, flop equivalences and {Grassmannian} twists},
 FJournal = {Compositio Mathematica},
 Journal = {Compos. Math.},
 ISSN = {0010-437X},
 Volume = {150},
 Number = {6},
 Pages = {942--978},
 Year = {2014},
 Language = {English},
 DOI = {10.1112/S0010437X13007641},
 Keywords = {14F05,18E30,14M15},
 zbMATH = {6333838},
 Zbl = {1354.14028}
}

@Book{Wey,
 Author = {Weyman, Jerzy M.},
 Title = {Cohomology of vector bundles and syzygies},
 FSeries = {Cambridge Tracts in Mathematics},
 Series = {Camb. Tracts Math.},
 ISSN = {0950-6284},
 Volume = {149},
 ISBN = {0-521-62197-6},
 Year = {2003},
 Publisher = {Cambridge University Press, Cambridge},
 Language = {English},
 Keywords = {13D02,14F05,20G05,05E10,13C40,14M12,14E15,13-02,14-02},
 zbMATH = {1983902},
 Zbl = {1075.13007}
}

@Article{Cautis,
 Author = {Cautis, Sabin},
 Title = {Equivalences and stratified flops},
 FJournal = {Compositio Mathematica},
 Journal = {Compos. Math.},
 ISSN = {0010-437X},
 Volume = {148},
 Number = {1},
 Pages = {185--208},
 Year = {2012},
 Language = {English},
 DOI = {10.1112/S0010437X11005616},
 Keywords = {14F05,14E30},
 zbMATH = {6007114},
 Zbl = {1236.14016}
}

@Article{CKL-Duke,
 Author = {Cautis, Sabin and Kamnitzer, Joel and Licata, Anthony},
 Title = {Coherent sheaves and categorical {{\(\mathfrak{sl}_2\)}} actions},
 FJournal = {Duke Mathematical Journal},
 Journal = {Duke Math. J.},
 ISSN = {0012-7094},
 Volume = {154},
 Number = {1},
 Pages = {135--179},
 Year = {2010},
 Language = {English},
 DOI = {10.1215/00127094-2010-035},
 Keywords = {14E05,17B37},
 zbMATH = {5769089},
 Zbl = {1228.14011}
}

@Article{CKL,
 Author = {Cautis, Sabin and Kamnitzer, Joel and Licata, Anthony},
 Title = {Coherent sheaves on quiver varieties and categorification},
 FJournal = {Mathematische Annalen},
 Journal = {Math. Ann.},
 ISSN = {0025-5831},
 Volume = {357},
 Number = {3},
 Pages = {805--854},
 Year = {2013},
 Language = {English},
 DOI = {10.1007/s00208-013-0921-6},
 Keywords = {14D21,17B37,16G20,17B67,16E20},
 zbMATH = {6226052},
 Zbl = {1284.14016}
}

@Article{CKL-sl2,
 Author = {Cautis, Sabin and Kamnitzer, Joel and Licata, Anthony},
 Title = {Derived equivalences for cotangent bundles of {Grassmannians} via categorical {{\(\mathfrak{sl}_2\)}} actions},
 FJournal = {Journal f{\"u}r die Reine und Angewandte Mathematik},
 Journal = {J. Reine Angew. Math.},
 ISSN = {0075-4102},
 Volume = {675},
 Pages = {53--99},
 Year = {2013},
 Language = {English},
 DOI = {10.1515/CRELLE.2011.184},
 Keywords = {14F05,18E30,14M15,17B37},
 zbMATH = {6146432},
 Zbl = {1282.14034}
}

\vspace{1em}
\begin{flushleft}
    {\fontsize{9.5}{11}\selectfont Department of Mathematics, Imperial College, London, SW7 2AZ, United Kingdom \\
    \href{mailto:w.zhou21@imperial.ac.uk}{\texttt{w.zhou21@imperial.ac.uk}}}
\end{flushleft}
\end{document}